\def\PrintMode{0}%normally PrintMode = 0. If PrintMode = 1, extreme methods will be used to reduce the number of pages. Can also set fontsize = 10pt to further shrink pages...
\newtheorem{theorem}{Theorem}[section]
\newtheorem{lemma}[theorem]{Lemma}
\newtheorem{proposition}[theorem]{Proposition}
\newtheorem{corollary}[theorem]{Corollary}
\newtheorem{observation}[theorem]{Observation}
\newtheorem{fact}[theorem]{Fact}
\theoremstyle{definition}
\newtheorem{definition}[theorem]{Definition}
\newtheorem{remark}[theorem]{Remark}
\renewcommand*\backref[1]{\ifx#1\relax \else (cit.~on p.~#1) \fi} %http://latex.org/forum/viewtopic.php?t=3670
\def\moverlay{\mathpalette\mov@rlay}
\def\mov@rlay#1#2{\leavevmode\vtop{%
		\baselineskip\z@skip \lineskiplimit-\maxdimen
		\ialign{\hfil$\m@th#1##$\hfil\cr#2\crcr}}}
\newcommand{\charfusion}[3][\mathord]{
	#1{\ifx#1\mathop\vphantom{#2}\fi
		\mathpalette\mov@rlay{#2\cr#3}
	}
	\ifx#1\mathop\expandafter\displaylimits\fi}
\renewcommand{\poly}{\mathrm{poly}}
\renewcommand{\polylog}{\mathrm{polylog}}
\newlang{\MCSP}{MCSP}
\newlang{\MFSP}{MFSP}
\newlang{\MKtP}{MKtP}
\newlang{\MKTP}{MKTP}
\newlang{\itrMCSP}{itrMCSP}
\newlang{\itrMKTP}{itrMKTP}
\newlang{\itrMINKT}{itrMINKT}
\newlang{\MINKT}{MINKT}
\newlang{\MINK}{MINK}
\newlang{\MINcKT}{MINcKT}
\newlang{\CMD}{CMD}
\newlang{\DCMD}{DCMD}
\newlang{\CGL}{CGL}
\newlang{\PARITY}{PARITY}
\newlang{\Empty}{\textsc{Empty}}
\newlang{\Avoid}{\textsc{Avoid}}
\newlang{\Sparsification}{\textsc{Sparsification}}
\newlang{\HamEst}{\mathsf{HammingEst}}
\newlang{\HamHit}{\mathsf{HammingHit}}
\newlang{\CktEval}{\textsc{Circuit-Eval}}
\newlang{\Hard}{\textsc{Hard}}
\newlang{\cHard}{\textsc{cHard}}
\newlang{\CAPP}{CAPP}
\newlang{\GapUNSAT}{GapUNSAT}
\newlang{\OV}{OV}
\renewlang{\PCP}{PCP}
\newlang{\PCPP}{PCPP}
\newclass{\Avg}{Avg}
\newclass{\ZPEXP}{ZPEXP}
\newclass{\DLOGTIME}{DLOGTIME}
\newclass{\ALOGTIME}{ALOGTIME}
\newclass{\ATIME}{ATIME}%alternating time
\newclass{\SZKA}{SZKA}
\newclass{\Laconic}{Laconic\text{-}}
\newclass{\APEPP}{APEPP}
\newclass{\SAPEPP}{SAPEPP}
\newclass{\TFSigma}{TF\Sigma}
\newclass{\NTIMEGUESS}{NTIMEGUESS}
\newlang{\Formula}{Formula}
\newlang{\THR}{THR}
\newlang{\EMAJ}{EMAJ}
\newlang{\MAJ}{MAJ}
\newlang{\SYM}{SYM}
\newlang{\DOR}{DOR}
\newlang{\ETHR}{ETHR}
\newlang{\Midbit}{Midbit}
\newlang{\LCS}{LCS}
\newlang{\TAUT}{TAUT}
\newlang{\Poly}{\text{-}Poly}
\newcommand{\dgr}{\mathsf{deg}}
\renewcommand{\epsilon}{\varepsilon}
\definecolor{color1}{RGB}{46,134,193}
\definecolor{color7}{RGB}{128,0,128}
\definecolor{color3}{RGB}{255,128,0}
\definecolor{color4}{RGB}{150,150,150}
\definecolor{color2}{RGB}{20,60,100}
\definecolor{color6}{RGB}{250,0,250}
\definecolor{color5}{RGB}{128,128,128}
\begin{document}

\title{Unique-neighbor Expanders with Better Expansion for Polynomial-sized Sets}
\author{
Yeyuan Chen\thanks{Department of EECS, University of Michigan, Ann Arbor. \href{mailto:yeyuanch@umich.edu}{\texttt{yeyuanch@umich.edu}}}} 
\date{}
\maketitle
\begin{abstract}
A $(d_1,d_2)$-biregular bipartite graph $G=(L\cup R,E)$ is called left-$(m,\delta)$ unique-neighbor expander iff each subset $S$ of the left vertices with $|S|\leq m$ has at least $\delta d_1|S|$ unique-neighbors, where unique-neighbors mean vertices with exactly one neighbor in $S$. We can also define right/two-sided expanders similarly. In this paper, we give the following three strongly explicit constructions of unique-neighbor expanders with better unique-neighbor expansion for polynomial-sized sets, while sufficient expansion for linear-sized sets is also preserved:
\begin{itemize}
\item Two-sided $(n^{1/3-\epsilon},1-\epsilon)$ lossless expanders for arbitrary $\epsilon>0$ and aspect ratio.
\item Left-$(\Omega(n),1-\epsilon)$ lossless expanders with right-$(n^{1/3-\epsilon},\delta)$ expansion for some $\delta>0$.
\item Two-sided-$(\Omega(n),\delta)$ unique-neighbor expanders with two-sided-$(n^{\Omega(1)},1/2-\epsilon)$ expansion.
\end{itemize}
The second construction exhibits the first explicit family of one-sided lossless expanders with unique-neighbor expansion for polynomial-sized sets from the other side and constant aspect ratio. The third construction gives two-sided unique-neighbor expanders with additional $(1/2-\epsilon)$ unique-neighbor expansion for two-sided polynomial-sized sets, which approaches the 
$1/2$ requirement in Lin and Hsieh (arXiv:2203.03581).

Our techniques involve tripartite product recently introduced by Hsieh et al (STOC 2024), combined with a generalized existence argument of biregular graph with optimal two-sided unique-neighbor expansion for almost all degrees. We also use a new reduction from large girth/bicycle-freeness to vertex expansion, which might be of independent interest.
\end{abstract}
\setcounter{tocdepth}{2}
%how to reduce the inverse pair and make it work for all degrees
%use of edge-vertex incidence graph in noga's paper
%fix a bug in thm5.1 HMMP24
%explain left/right/biregular
%need an image in def4.5
%add proof overview for long proofs
%generalize random biregular graph profile 
%explain parameter dependency in proof
%n^1/3 is tight in some sense from MM21
%mu independent with epsilon in thm3
%generalize biregular graph profile

%why strong explicit construction is useful.
%what is ramanujan graph
%explain balanced
%emphasize advantage of new girth reduction
%emphasize difficulty of generalizing biregular graph profile.
\pagenumbering{roman}

\pagenumbering{arabic}
\section{Introduction}
A one-sided unique-neighbor expander is a bipartite graph such that any small subset $S$ of its left vertices have many unique-neighbors, where unique-neighbors mean right vertices that are incident to exactly one vertex in $S$. Furthermore, if we can guarantee $(1-\epsilon)$ fraction of edges from $S$ entering unique-neighbors for arbitrarily small constant $\epsilon>0$, we call it a lossless expander. Explicit constructions of one-sided unique-neighbor expanders have many applications in some areas of theoretical computer science, such as error-correcting codes \cite{expandercode,ldpc}, compressed sensing \cite{compressed,compressed2}, proof complexity \cite{widgerson,widgerson2} and pseudorandomness \cite{guv,extcode}. Motivated by the above applications, people did a long line of work to explicitly construct one-sided unique-neighbor (lossless) expanders \cite{uniqueneighbor,zigzag,louis24,dinur23,noga23,hdx}. These constructions can guarantee one-sided lossless expansion for any $O(n)$-size vertex sets.

A recent work \cite{lin2022good} found a reduction that explicit constructions of two-sided lossless expanders with specific algebraic property yield quantum LDPC codes with linear-time decoder. This motivates people to focus more on constructions of two-sided unique-neighbor expanders, which means the graph should have good expansion for all small sets from \emph{both} sides. Unfortunately, all classical constructions of one-sided unique-neighbor expanders don't exhibit expansion from the other side. Some constructions using the so-called `Routed Product' technique will even destroy the expansion of even constant-sized sets from the other side.

Unlike unique-neighbor expanders, we know a lot of explicit constructions of spectral expanders \cite{lps88,lpsimprove,everydegree,ow20} whose adjacency matrices have bounded second-largest eigenvalues. Although it was pointed out that mere spectral expanders don't exhibit unique-neighbor expansion, many previous constructions \cite{louis24,dinur23,noga23} use them as a `base graph'. In very recent work, \cite{hmmp24} generalizes this idea and gets explicit construction of two-sided unique-neighbor expanders. However, their construction only guarantees a small constant fraction of edges entering unique-neighbors, while \cite{lin2022good} requires this fraction to be larger than $1/2$. This motivates us to construct two-sided expanders with `better than just unique-neighbor' or even lossless expansion.

In this paper, we make progress on this topic. We give strongly explicit constructions of expanders with better two-sided expansion for polynomial-sized sets. To the best of our knowledge, these are the first set of constructions that give better two-sided expansion for polynomial-sized sets, while simultaneously preserving unique-neighbor/lossless expansion for linear-sized sets. Our results are precisely described below.
\subsection{Our Results}
For any graph $G$ and its vertex set $S\subseteq V(G)$, let $\mathsf{UN}_G(S)$ denote the set of unique-neighbors of $S$ in $G$ defined as $\mathsf{UN}_G(S):=\{v\colon e(v,S)=1,v\in V(G)\backslash S\}$. Here, $e(v,S)$ denotes the number of edges between $S$ and vertex $v$.

For an infinite family of graphs $(G_n)_n$, if there is an algorithm $A(1^n)$ that outputs $G_n$ in $\poly(n)$ time, we call it an explicit construction of $G_n$. Moreover, if there is an algorithm $B(1^n,v,i)$ that outputs the $i$-th edge incident to $v\in V(G_n)$ in $\polylog(n)$ time, we call it a strongly explicit construction of $G_n$.
\begin{definition}
For any $(d_1,d_2)$-biregular bipartite graph $G=(L\cup R,E)$, $\beta:=\frac{d_1}{d_2}=\frac{|R|}{|L|}$ is called its aspect ratio. For any $0<\delta<1$ and $m$, we define:

1. If for any $S\subseteq L$ with $|S|\leq m$, we have $|\mathsf{UN}_G(S)|\ge \delta d_1|S|$, we call $G$ a left-$(m,\delta)$ unique-neighbor expander.

2. If for any $S\subseteq R$ with $|S|\leq m$, we have $|\mathsf{UN}_G(S)|\ge \delta d_2|S|$, we call $G$ a right-$(m,\delta)$ unique-neighbor expander.
\end{definition}
When $\delta=1-\epsilon$ for arbitrary small constant $\epsilon$, we call it a lossless expander.
\paragraph{Unbalanced Two-sided Lossless Expander for Polynomial-sized Sets}
We observe that large girth implies lossless expansion (See also \cite{hmmp24,kahale95}). For the best known explicit construction of biregular large girth graph with girth about $\frac{4}{3}\log{n}$ (\cite{lps88}), it implies $(n^{1/3-\epsilon},1-\epsilon)$ lossless expansion. However, many of similar constructions are not known to be biregular and  
 unbalanced (i.e. with aspect ratio $\beta\neq 1$) at the same time. In our first result, we get a strongly explicit construction for biregular bipartite two-sided lossless expanders matching the above bound with arbitrary bidegrees and aspect ratios by considering an adaption of \cite{lazebnik1}:
\begin{theorem}[\Cref{thmlaze}, Informal]
For all $\epsilon,\beta>0$, there are infinitely many bidegrees $d_1,d_2$ where $\frac{d_1}{d_2}=\beta$, such that we have strongly explicit construction of an infinite family of two-sided $(d_1,d_2)$-biregular $(\Omega(n^{1/3-\epsilon}),1-\epsilon)$ unique-neighbor (lossless) expanders.
\end{theorem}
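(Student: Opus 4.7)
The plan has two essentially independent components: (i) a structural reduction from girth to lossless unique-neighbor expansion, and (ii) a strongly explicit family of biregular bipartite graphs with prescribed aspect ratio and near-optimal girth.

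For (i), I would prove that any $(d_1,d_2)$-biregular bipartite graph with girth at least $g$ is simultaneously a two-sided $(m,1-\epsilon)$ unique-neighbor expander, where $m=\Omega(d_1^{\lfloor(g-2)/4\rfloor})$ on the left (and symmetrically with $d_2$ on the right), once the hidden constant is tuned to $\epsilon$. The proof is a Kahale-style double-counting argument: if some $S\subseteq L$ with $|S|\le m$ had $|\mathsf{UN}_G(S)|<(1-\epsilon)d_1|S|$, then at least $\epsilon d_1|S|/2$ edges from $S$ would land at shared right endpoints, and chaining such collisions through a BFS exploration of $S$ forces a closed non-backtracking walk of length $O(\log_{d_1}(|S|/\epsilon))$. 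For $|S|$ below the stated polynomial threshold this length is smaller than $g$, contradicting the girth hypothesis. Plugging in $g\ge(\tfrac{4}{3}-o(1))\log n$ converts the threshold into $m=\Omega(n^{1/3-\epsilon})$, so the whole theorem reduces to step (ii).

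For (ii), the Lubotzky--Phillips--Sarnak construction \cite{lps88} attains girth $\tfrac{4}{3}\log n$ but is vertex-transitive and so necessarily regular ($d_1=d_2$). The natural candidate in the biregular regime is the Lazebnik--Ustimenko construction \cite{lazebnik1}, which builds a bipartite graph on $\mathbb{F}_q^a\cup\mathbb{F}_q^b$ from a staircase system of polynomial relations whose algebraic structure rules out short cycles down to a logarithmic length. The definition already yields biregularity with degrees of the form $d_1=q^{b-s}$ and $d_2=q^{a-s}$ for an internal parameter $s$, so the aspect ratio $\beta=d_1/d_2=q^{b-a}$ ranges over an infinite dense subset of $\mathbb{R}_{>0}$ as $(q,a,b)$ vary. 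Fixing $(q,a-b)$ and letting the shared parameter grow produces an infinite family, while the algebraic definition makes the neighbor oracle computable in $\polylog(n)$ time, giving strong explicitness for free.

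The main obstacle is verifying that the girth lower bound $(\tfrac{4}{3}-o(1))\log n$ still goes through under the asymmetric choice of $(a,b)$ required to hit an arbitrary prescribed $\beta$. The original Lazebnik--Ustimenko argument reduces the existence of a short cycle to a rank/resultant condition on the defining polynomials; this condition depends on the degree profile of the system but not on symmetry between the two sides, so after routine bookkeeping the same argument adapts to the biregular setting. Combining (i) and (ii) with $g=(\tfrac{4}{3}-o(1))\log n$ delivers, for every $\epsilon,\beta>0$, an infinite family of strongly explicit $(d_1,d_2)$-biregular bipartite graphs with $d_1/d_2=\beta$ that are two-sided $(\Omega(n^{1/3-\epsilon}),1-\epsilon)$ unique-neighbor expanders, which is exactly the theorem.
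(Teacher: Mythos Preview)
Your two-step plan matches the paper's architecture, and your part (i) is fine: the paper proves the girth-to-expansion reduction by a different route (a subsampling argument counting length-$2k$ simple paths in a random induced subgraph, \Cref{girthtoexpansion}) rather than a Kahale-style BFS collision argument, but either technique yields the same threshold $|S|\le (\Theta(\epsilon d))^{g/4}$ and is adequate here.

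Part (ii), however, has a genuine gap. The raw Lazebnik--Ustimenko graph $D(k,q)$ has $2q^k$ vertices and girth $k+4$, so its girth is only $(1+o(1))\log_q n$, not $(\tfrac{4}{3}-o(1))\log_q n$; plugging this into your part (i) gives only $n^{1/4-\epsilon}$ lossless expansion, missing the claimed $n^{1/3-\epsilon}$. The $\tfrac{4}{3}$ constant in \cite{lazebnik1,lazebnik2} is obtained by a second, separate step you have omitted: one identifies an algebraic ``certificate'' invariant $a_r(u)\in\mathbb{F}_q^{r-1}$ with $r=\lfloor(k+2)/4\rfloor$ that is constant on connected components, and restricts to the component $CD(k,q)$ with zero certificate. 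This shrinks the vertex count to $\Theta(q^{k-r})\approx q^{3k/4}$ while preserving the girth $k+4$, and only then does the girth become $\tfrac{4}{3}\log_q n$. Moreover, after this restriction strong explicitness is no longer automatic: to index vertices of $CD(k,q)$ in $\polylog(n)$ time one has to show that the certificate constraint can be solved coordinatewise, which the paper does via an explicit polynomial inverse in $\mathbb{F}_q[x]/(x^{r+1})$.

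Separately, your description of how biregularity arises is off. The construction does not vary the ambient dimensions to produce degrees $q^{b-s}$ versus $q^{a-s}$ (which would only realise aspect ratios that are powers of $q$). Instead, $D(k,q)$ is $(q,q)$-biregular, and one observes that fixing a left vertex and the \emph{first coordinate} $l_1$ of its neighbor determines that neighbor uniquely; hence restricting left first-coordinates to a set $A\subseteq\mathbb{F}_q^\ast$ and right first-coordinates to $B\subseteq\mathbb{F}_q^\ast$ yields a $(|B|,|A|)$-biregular subgraph $CD(k,q,A,B)$ with the same girth. Choosing $|A|,|B|$ freely then gives any aspect ratio in $[\beta_0,\beta_1]$ for $q$ large enough.
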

\paragraph{Lossless Expanders with Polynomial-sized Expansion on Another Side}
There is also a long line of research working on constructing one-sided lossless expanders for linear-sized sets (e.g. \cite{zigzag,louis24}). However, to the best of our knowledge, if we require lossless expansion for $\Omega(n)$-size sets from one side, all known constructions give little guarantee of expansion from the other side. Some constructions using routed product (\cite{louis24,dinur23,noga23}) will inevitably destroy the expansion of even constant-sized sets from the other side. Even if we allow the aspect ratio $\beta>1$, the best-known expansion from the other side (\cite{ow20} or generalized \cite{hmmp24}) only guarantees expansion for $\exp(O(\sqrt{\log{n}}))\leq n^{o(1)}$-sized sets, which is far from satisfying. In this work, we give the first strongly explicit construction of left-lossless expanders for linear-sized sets that also guarantees right-unique-neighbor expansion for polynomial-sized sets. 
\begin{theorem}[\Cref{thmlosslessunique}, Informal]
For any $\epsilon>0$, there exists $\delta(\epsilon)>0,\beta(\epsilon)>1$ such that for infinitely many $d_1,d_2$ where $\frac{d_1}{d_2}=\beta$, there is strongly explicit construction of an infinite family of $(d_1,d_2)$-biregular graphs $G$ such that:
\begin{compactitem}
\item[(1)] $G$ is a left-$(\Omega(n),1-\epsilon)$ lossless expander.

\item[(2)] $G$ is a right-$(n^{1/3-\epsilon},\delta)$ unique-neighbor expander.

\item[(3)] $G$ is a right-$(n^{o(1)},1-\epsilon)$ lossless expander.
\end{compactitem}
\end{theorem}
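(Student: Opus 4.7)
The plan is to build $G$ by composing two ingredients via the tripartite product of Hsieh et al.\ (STOC 2024): an ``inner'' two-sided polynomial-set expander obtained from Theorem~\ref{thmlaze} (which packages the Lazebnik-style high-girth construction), and an ``outer'' one-sided lossless expander on linear-sized sets chosen with the correct aspect ratio. The inner graph will supply the right-side guarantees (2) and (3), while the outer graph will promote the left-side guarantee (1) from polynomial to linear scale.

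First, I would fix auxiliary constants $\epsilon_0,\epsilon_1\ll\epsilon$ and, using Theorem~\ref{thmlaze}, choose an infinite family of $(d_1',d_2')$-biregular graphs $H$ with aspect ratio $\beta_H$ close to $1$ that are two-sided $(\Omega(n^{1/3-\epsilon_0}),1-\epsilon_0)$ lossless expanders. The high-girth/bicycle-free structure underlying Theorem~\ref{thmlaze} is what delivers polynomial-set lossless expansion; I would keep this structure explicit because the new girth-to-vertex-expansion reduction promised in the abstract is what lets us later convert a mild local regularity property into the right-side unique-neighbor bound required by (2). Second, I would pick, as the outer graph, a standard strongly explicit left-$(\Omega(n),1-\epsilon)$ one-sided lossless expander $B$ with appropriate degrees (for instance from the Capalbo--Reingold--Vadhan--Wigderson framework, or the iterated-tripartite-product construction of \cite{louis24}) so that its parameters can be matched with $H$ via the tripartite product, and so that the overall aspect ratio $\beta=d_1/d_2$ of the resulting graph exceeds $1$ by the required amount.

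The analysis then splits into three cases according to the size of the tested set $S$. For (1), a left set $S\subseteq L$ with $|S|=\Omega(n)$: I would use the left-lossless expansion of $B$ to produce many distinct images, then use the fact that the inner graph $H$ acts on each image by a bounded-degree map, so only an $\epsilon$-fraction of edges can collide; standard tripartite-product bookkeeping yields a $(1-\epsilon)$-lossless bound. For (3), a right set $T\subseteq R$ with $|T|\le n^{o(1)}$: I would apply the bicycle-free/high-girth property of $H$, which, once pulled back through the product, implies that the neighborhood of $T$ looks like a forest, and therefore every edge from $T$ has a unique endpoint with $(1-\epsilon)$ probability. For (2), a right set $T$ with $|T|\le n^{1/3-\epsilon}$: this is the interesting range, and I would invoke the new girth/bicycle-freeness-to-vertex-expansion reduction, together with the right-side polynomial-set expansion of $H$, to argue that the image under the tripartite product retains an $\Omega(1)$ fraction of unique neighbors. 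This is where a parameter $\delta=\delta(\epsilon)>0$ is produced.

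The main obstacle I anticipate is case (2): the tripartite product typically transfers one-sided expansion well but can damage the other side, and we need to certify that the polynomial-set expansion of the inner graph $H$ survives the composition with the outer graph $B$, whose own right-side behavior is a priori weak. Concretely, the trouble is that a small right-set $T$ in $G$ may pull back to a set in $H$ of comparable size but non-generic distribution, possibly avoiding the vertices where $H$'s expansion is strong; controlling this requires the bicycle-freeness of $H$ to dominate locally and a careful charging argument to account for ``bad'' configurations globally. I would expect the proof to handle this through the abstract girth-to-vertex-expansion lemma combined with a union bound over the polynomially many possible shapes of $T$, leveraging the strong explicitness (and hence structural predictability) of both $B$ and $H$.
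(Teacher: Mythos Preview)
Your proposal misidentifies both the architecture of the tripartite product and the choice of ingredients, and this leads to a genuine gap.

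The tripartite product in this paper takes \emph{three} pieces: two large base graphs $G_1=(L\cup M,E_1)$ and $G_2=(M\cup R,E_2)$, and a constant-size gadget $G_0$ placed on each middle vertex. Neither base graph is a lossless expander, and neither comes from \Cref{thmlaze}. In the actual proof, $G_1$ is the LPS Ramanujan graph (spectral bound $\lambda_2\le 2\sqrt{p}$ and girth $\tfrac{4}{3}\log_p n$), $G_2$ is the near-Ramanujan biregular graph of \cite{ow20} (with $\Omega(\sqrt{\log n})$ bicycle-freeness), and $G_0$ is a $(d,d)$-biregular graph with near-optimal two-sided unique-neighbor expansion, found by brute force using the existence argument of \Cref{linearbiregular}. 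The left-lossless guarantee (1) is not inherited from any pre-existing lossless expander; it is \emph{derived} by combining the subgraph-density bound (\Cref{subgraphexpansion}) on both spectral base graphs with the expansion of $G_0$. This is the crux: choosing $d=\Theta(\sqrt{DD'})$ so that after bounding the heavy-neighbor contribution in $G_1$ spectrally, the unique-neighbors produced by $G_0$ survive the second spectral density bound in $G_2$.

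Your plan to take a CRVW/routed-product style one-sided lossless expander as the ``outer'' graph $B$ is precisely the approach the paper rules out in the introduction: those constructions destroy expansion of even constant-sized sets from the other side, so there is no way the right-side guarantees (2),(3) can be recovered from $H$ after composing with such a $B$. The correct mechanism for (2) is different: one first uses the spectral bound on $G_2$ to control heavy middle vertices, then pushes the resulting unique-neighbors through $G_1$ using its large girth and the girth-to-expansion reduction (\Cref{girthtoexpansion}); because $d$ need not dominate $D$ on this side, the spectral argument is unavailable and girth is essential. Guarantee (3) similarly uses bicycle-freeness of $G_2$ followed by girth of $G_1$. Your proposal has the girth-to-expansion lemma in hand but applies it to the wrong object and pairs it with an incompatible outer component.
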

\begin{remark}
Fix $\epsilon$, the aspect ratio $\beta$ can be seen as a constant for arbitrarily large bidegree $d_1,d_2$. However, a weakness of this result is that we require $\beta>1$, which is not satisfying in some applications.
\end{remark}
\paragraph{Two-sided Unique-neighbor Expanders with Better Expansion for Polynomial-sized Sets} Motivated by potential constructions of good quantum LDPC codes with linear time decoder via two-sided expanders (\cite{lin2022good}), 
\cite{hmmp24} gives a construction of two-sided-$(\Omega(n),\delta)$ unique-neighbor expanders. However, the parameter $\delta$ in their construction must be some small enough constant $\delta<0.005$, which doesn't yield satisfying quantum LDPC codes via \cite{lin2022good}'s reduction. In fact, the interested case in \cite{lin2022good} requires $\delta>1/2$. This motivates us to care about unique-neighbor expansion with parameter $\delta=1/2$. In our third result, we give a strongly explicit construction of two-sided $(\Omega(n),\delta)$-unique-neighbor expanders that also guarantees $(n^{\Omega(1)},  1/2-\epsilon)$ two-sided unique-neighbor expansion for arbitrarily small $\epsilon>0$. 
\begin{theorem}[\Cref{thmtwoside} Informal] 
For any $\epsilon,\beta>0$, there exists $\delta(\beta)>0$ such that for infinitely many $d_1,d_2$ where $\frac{d_1}{d_2}=\beta$, there is strongly explicit construction of an infinite family of $(d_1,d_2)$-biregular graphs $G$ such that:
\begin{compactitem}
\item[(1)] $G$ is a two-sided-$(\Omega(n),\delta)$ unique-neighbor expander.

\item[(2)] $G$ is a two-sided-$(n^{\Omega(1)},1/2-\epsilon)$ unique-neighbor expander.
\end{compactitem}
\end{theorem}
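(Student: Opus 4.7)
The plan is to instantiate the tripartite product of Hsieh et al.\ (STOC 2024), as used in \cite{hmmp24}, with (a) a base graph combining spectral expansion with logarithmic girth and (b) a constant-size gadget with near-optimal two-sided unique-neighbor expansion; the $1/2 - \epsilon$ guarantee for polynomial-sized sets will then come from a bicycle-freeness-to-vertex-expansion reduction.

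The first step is a generalized existence argument: for almost all $(d_1, d_2)$ with $d_1/d_2 = \beta$, I would show that there is a constant-size biregular bipartite gadget $H$ of order $m_0 = m_0(\beta, \epsilon)$ that is a two-sided $(m_0, 1/2 - \epsilon/2)$ unique-neighbor expander. This is a counting/probabilistic argument of Moore type; because $H$ has constant size, brute-force search locates it without affecting strong explicitness. This extends to ``almost all'' degrees the existence of biregular graphs with near-optimal two-sided unique-neighbor expansion.

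Next I would take a $(D_1, D_2)$-biregular bipartite base graph $G_0$ on $\Theta(n)$ vertices with two properties: (i) spectral gap sufficient to yield, via the standard spectral-to-unique-neighbor argument, two-sided $(\Omega(n), \delta_0)$ unique-neighbor expansion for some $\delta_0 > 0$ depending only on $\beta$; and (ii) bicycle-freeness at radius $\Omega(\log n)$, e.g.\ via LPS-type Ramanujan graphs or their biregular analogues. Forming $G$ as the tripartite product of $G_0$ and $H$, the analysis of \cite{hmmp24} directly yields property (1) with some constant $\delta = \delta(\beta) > 0$, inherited from the spectral gap of $G_0$ together with the vertex expansion of $H$.

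For property (2), I would invoke the new bicycle-freeness-to-vertex-expansion reduction highlighted in the abstract. Any set $S$ in $G$ with $|S| \leq n^{\alpha}$, after collapsing gadget clusters, corresponds to a set $S_0$ in $G_0$ whose $\Omega(\log n)$-neighborhood is forest-like because of the large girth of $G_0$. Within such a region, unique-neighbor expansion of $G$ decomposes into local contributions from independent copies of $H$, and the $1/2 - \epsilon/2$ local expansion of $H$ lifts to two-sided $1/2 - \epsilon$ expansion of $G$. The main obstacle is parameter balance: $m_0$ must be large enough that local expansion approaches $1/2$, yet small enough that the bicycle-free radius of $G_0$ translates into a regime of size $n^{\Omega(1)}$ in $G$ rather than merely polylogarithmic; additionally one must bridge the medium-sized regime between the polynomial and linear cases so that property (1) is not damaged by the analysis of property (2). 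Verifying that both $G_0$ and $H$ admit $\polylog(n)$-time neighborhood queries completes the strong explicitness.
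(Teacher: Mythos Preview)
Your high-level plan (tripartite product, spectral base graph, constant-size gadget, girth/bicycle-freeness reduction for small sets) matches the paper's framework, but the instantiation has a genuine conceptual gap concerning where the $1/2-\epsilon$ comes from. You attribute it to the gadget: you ask for a gadget $H$ that is itself a two-sided $(m_0,1/2-\epsilon/2)$ expander and then argue this ``lifts'' through the product. In the paper the gadget is \emph{near-lossless} (it has $(1-o(1))d\exp(-|S|d/D)$ unique-neighbor expansion, obtained by a random biregular graph argument with $d=\Theta(\sqrt{D})$), and the factor $1/2$ arises instead from the \emph{base graphs}: both base graphs are the edge-vertex incidence graphs of LPS Ramanujan graphs, hence $(2,D)$-biregular. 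Because the left degree is $2$, the girth-to-expansion reduction applied to a set $S$ in the base graph only guarantees $|N(S)|\ge (1-\epsilon')|S|$ rather than $(2-\epsilon')|S|$; after propagating this through the (near-lossless) gadget and the second base graph one ends up with $(1-O(\epsilon'))d_1|S|$ unique neighbors, which is $(1/2-\epsilon)(2d_1)|S|$ in terms of the final degree $2d_1$. So the $1/2$ is a structural ceiling coming from degree $2$, not a parameter you dial into the gadget.

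Two further points your outline misses. First, the tripartite product requires \emph{two} base graphs $G_1=(L\cup M,E_1)$ and $G_2=(M\cup R,E_2)$, not a single $(D_1,D_2)$-biregular $G_0$; the paper uses two separate edge-vertex incidence graphs of LPS graphs (with different degrees $D_1,D_2$ to achieve the desired aspect ratio $\beta$), and shows that these inherit both near-Ramanujan spectrum and $\Theta(\log n)$ girth from the underlying LPS graphs. The edge-vertex incidence construction is precisely what simultaneously delivers spectral expansion, $\Theta(\log n)$ girth, and an arbitrary aspect ratio---there is no off-the-shelf biregular Ramanujan family with all three. Second, because the base graphs have one side of degree $2$, the girth-to-expansion reduction must be valid for average degree $d\ge 2$; the paper's new combinatorial (subsampling) proof of this reduction covers $d\ge 2$, whereas the spectral argument in \cite{hmmp24} needs $d\ge 3$ and would not suffice here.
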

\paragraph{New Reduction from Large Girth to Expansion}
In the above construction, one key lemma we use is a reduction from large girth or bicycle-freeness to vertex expansion. This is also observed by some previous work \cite{hmmp24,reductionmohanty}. While previous proof uses spectral techniques, our proof builds on combinatorial arguments via subsampling. Furthermore,  Our reduction gives better bounds than previous work in two facets: (1). The size bound has an additional $\Omega(g)$ factor, where $g$ denotes the girth of the graph. (2). Our reduction applies to a wider range of parameters, which is crucial in the proof of \Cref{thmtwoside}. See \Cref{girthcompare} for more details. Moreover, our reduction is strictly stronger than the similar reduction appeared in \cite{reductionmohanty}. That's because \cite{reductionmohanty} requires the large-girth graph to be a (near) Ramanujan graph, while our new reduction only cares about large-girth itself modulo any spectral requirement.
\begin{theorem}[\Cref{girthtoexpansion} Informal]
For any $\epsilon,g,d>0$, it holds that: For any bipartite undirected graph $G=(L\cup R,E)$ that has girth $g$ and average left-degree $d$, if $|L|\leq O(g(\epsilon d)^{g/4})$, then there is $|R|\ge (1-\epsilon)d|L|$.
\end{theorem}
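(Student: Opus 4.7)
}
The plan is to argue by contrapositive: assume $G=(L\cup R,E)$ has girth $\ge g$ and average left-degree $d$ but violates $|R|\ge (1-\epsilon)d|L|$, and derive a lower bound $|L|>\Omega(g(\epsilon d)^{g/4})$. After a standard peeling step enforcing minimum left-degree at least $d/2$ (which preserves girth $g$ and degrades the failure of expansion by only a constant factor), I would set up an auxiliary simple graph $H$ on the vertex set $L$ by connecting two distinct left-vertices whenever they share a common right-neighbor in $G$. Since girth $g\ge 6$ of $G$ forbids $4$-cycles, any two left-vertices share at most one right-neighbor, so $|E(H)|=\sum_{v\in R}\binom{\deg v}{2}$; Cauchy-Schwarz combined with $|R|<(1-\epsilon)d|L|$ then yields $|E(H)|\ge \epsilon d|L|/2$, i.e., $H$ has average degree at least $\epsilon d$. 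A chord analysis on any shortest $H$-cycle $u_1\cdots u_k u_1$ rules out coincidences among certifying right-neighbors $v_i\in R$: if $v_i=v_j$, then $\{u_i,u_j\}$ share $v_i$ and create a chord contradicting shortness; thus all $v_i$ are distinct and the cycle lifts to one of length $2k$ in $G$, forcing $k\ge g/2$ and hence $\mathrm{girth}(H)\ge g/2$.

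The crux is to beat the classical Moore bound on $H$ by the additional factor of $g$. The standard Moore argument applied to an $N$-vertex graph of girth $g'$ and minimum degree $\Delta$ retains only the deepest BFS level and yields $N\ge (\Delta-1)^{g'/2-1}$; with $g'=g/2$ and $\Delta=\epsilon d/2$ (after a secondary peeling on $H$), this gives only $(\epsilon d)^{g/4}$. My plan is instead to execute a combinatorial double-count that aggregates across BFS depths as well as starting roots via a subsampling argument of the flavor mentioned in the introduction. Concretely, count triples $(u,u',j)$ with $u,u'\in L$ distinct and $u'$ at $H$-distance exactly $2j$ from $u$: the girth condition makes the ``collision depth'' $j$ unique for each pair $(u,u')$ with $j<g/8$, so the total number of such triples is at most $|L|(|L|-1)$. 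On the other hand, for each root $u$ and each admissible $j$, BFS acyclicity in $H$ produces at least $\Delta(\Delta-1)^{2j-1}$ vertices at depth exactly $2j$, and summing over $j=1,\ldots,\lfloor g/8\rfloor$ keeps the full geometric series rather than just the top term. Comparing the two expressions delivers $|L|\gtrsim g\cdot (\epsilon d)^{g/4}$, contradicting the hypothesis on $|L|$.

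The main obstacle I anticipate is executing this sum-over-levels step cleanly. In the ``arithmetic'' regime $\Delta\approx 2$ each level contributes roughly a constant and summing over $\Theta(g)$ levels yields the linear-in-$g$ factor, whereas in the ``geometric'' regime $\Delta\gg 2$ the standard Moore argument suffices; the subsampling must unify these regimes by charging each pair of left-vertices to a carefully chosen depth, without double-counting and without losing the uniqueness of shortest paths. A secondary technical point is shuttling between average- and minimum-degree conditions on both $G$ and $H$, which costs only constant factors via iterated peeling but has to be tracked carefully so that the final constants in $\Omega(g(\epsilon d)^{g/4})$ remain genuinely independent of $g$.
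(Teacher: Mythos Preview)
Your auxiliary-graph route differs substantially from the paper's argument and, as written, has two real gaps.

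First, the claim $\mathrm{girth}(H)\ge g/2$ is false in general. Any $v\in R$ with $\deg_G(v)\ge 3$ creates a triangle in $H$ among its left-neighbors, irrespective of the girth of $G$; your chord argument cannot exclude this, since in a $3$-cycle every coincidence $v_i=v_j$ connects two vertices that are already adjacent on the cycle and hence produces no chord to contradict minimality. Without a girth lower bound on $H$ the Moore step has no leverage.

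Second, even granting $\mathrm{girth}(H)\ge g/2$ and minimum degree $\Delta\approx \epsilon d/2$ after peeling, your sum-over-levels count $\sum_{j\le g/8}\Delta(\Delta-1)^{2j-1}$ is within a constant of its last term whenever $\Delta>2$, so it yields only $|L|\gtrsim(\epsilon d/2-1)^{g/4}$, not $g\cdot(\epsilon d)^{g/4}$; summing BFS levels from a single root \emph{is} the ordinary Moore bound and cannot manufacture an extra factor of $g$. The paper obtains the $g$ factor from a different mechanism: it counts simple length-$2k$ paths in $G$ itself ($k=\lfloor g/4\rfloor$) and proves a Counting Lemma (\Cref{medcounting}) giving at least $k(m-n+1)$ such paths---the factor $k=\Theta(g)$ arises because each edge beyond a spanning forest lies on a cycle of length $>g$, hence on $k$ distinct length-$2k$ subpaths of that cycle. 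The subsampling is of $R$, not of depths: retain each $v\in R$ independently with probability $p=1/d'$, so that in the random induced subgraph $\mathbb{E}[|E|-|V|]\ge |L|(\epsilon dp-1)$, and compare the resulting lower bound $k|L|(\epsilon dp-1)$ against the upper bound $\tfrac12|L|^2p^{k}$ (from uniqueness of endpoint pairs under the girth hypothesis). This gives $|L|\gtrsim g(\epsilon d/d'-1)(d')^{g/4}$ uniformly across degree regimes.
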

\begin{remark}
We can not simply use classical irregular Moore bound \cite{irregularmoore} to get \Cref{girthtoexpansion} since it only yields about $O((4+4\epsilon)^{g/4})$ size bound, far smaller than ours in most of interesting cases. Even the more refined bipartite irregular Moore bound \cite{bipartitemoore} is not applicable since it requires a minimum degree of at least $2$, which isn't true in most vertex expansion cases. If one resorts to the `pruning' trick to remove vertices with degree $1$, \cite{bipartitemoore} will lose advantage over \cite{irregularmoore} since we can't guarantee left/right average degree to be non-decreasing separately after pruning. Therefore, it's necessary to build this reduction from scratch. 
\end{remark}

We list some related previous results in \cref{table:comparison} for comparison.

%citation, linear, polynomial, aspect ratio
\begin{table}[H]

\caption{Comparison with previous constructions. This table only includes constructions that guarantee at least $\Omega(|S|)$ unique-neighbors. There are some other constructions that only guarantee the existence of unique-neighbors. All these explicit constructions work for infinitely many constant bidegrees $(d_1,d_2)$.}\label{table:comparison}
\resizebox{\textwidth}{!}
{
\begin{tabular}{|c|c|c|c|}
\hline
citation & linear-sized expansion & sublinear-sized expansion & aspect ratio\\
\hline
\cite{lps88,lpsimprove} & -- & two-sided $(n^{1/3-\epsilon},1-\epsilon)$ & $1$ \\
\hline
\Cref{thmlaze} & -- & two-sided $(n^{1/3-\epsilon},1-\epsilon)$ & any $\beta>0$ \\
\hline
\cite{louis24,zigzag,hdx} & left-$(\Omega(n),1-\epsilon)$ & -- & any $\beta>0$ \\
\hline
\cite{ow20} & left-$(\Omega(n),1-\epsilon)$ & two-sided $(\exp(\Omega(\sqrt{\log{n}})),1-\epsilon)$ & $\beta(\epsilon)>1$ \\
\hline
\multirow{2}*{\cref{thmlosslessunique}}&\multirow{2}*{left-$(\Omega(n),1-\epsilon)$} & two-sided $(n^{1/3-\epsilon},\delta(\epsilon))$ & \multirow{2}*{$\beta(\epsilon)>1$}\\
~ & ~ & two-sided $(\exp(\Omega(\sqrt{\log{n}})),1-\epsilon)$ & ~ \\
\hline
\cite{hmmp24} & two-sided $(\Omega(n),\delta(\beta,\epsilon))$ & two-sided $(\exp(\Omega(\sqrt{\log{n}})),1-\epsilon)$ & any $\beta>0$ \\
\hline
\Cref{thmtwoside} & two-sided $(\Omega(n),\delta(\beta))$ & two-sided $(n^{\Omega(1)},1/2-\epsilon)$ & any $\beta>0$ \\ \hline
\end{tabular}
}
\end{table}
\subsection{A Related Work}
After this paper has been submitted, an independent work \cite{eshan24} was published online working on the same topic. \cite{eshan24} is based on multiplicity codes and generalized Hermite interpolation, which is different from our techniques. Their main results are also incomparable with ours. Concretely, their graph degrees are $\Theta(\polylog{n})$ rather than constant in our paper. Their aspect ratios are also non-constant. The merit of their work is that they get two-sided lossless expansion, but other parameter settings are incomparable with \cref{thmlaze,thmlosslessunique}.
\subsection{Technical Overview}\label{technicaloverview}
In \cref{sec:reduction}, we sketch proof of our new reduction from large girth or bicycle-freeness to vertex expansion using subsampling technique. With this reduction, we describe our constructions \cref{thmlaze,thmlosslessunique,thmtwoside} in \cref{sec:construction}. Finally, in \cref{sec:tool} we briefly introduce some technical tools used in these constructions and how we generalize them from previous work.
\subsubsection{Vertex Expansion from Large Girth or Bicycle-freeness}\label{sec:reduction}
Our first technical tool is to give a new vertex expansion theorem from large girth or bicycle-freeness. Here we give a proof overview of expansion from large girth, and expansion from bicycle-freeness follows from similar arguments. Now, suppose a bipartite graph has girth $g$. First, similar to \cite{bipartitemoore}, we observe that we can't have two different $g/2$-simple paths sharing the same starting and ending points, since otherwise, they will form a cycle with length at most $g$, which contradicts the girth requirement. From a simple pigeonhole principle, we can bound the number of such paths by $O(n^2)$. On the other hand, consider a graph $G$ with an arbitrary spanning tree $T$, every non-tree edge $e$ in $G$ yields a cycle larger than $g$, and deleting $e$ will delete $\Omega(g)$ simple $g/2$-paths at the same time. Therefore, by iteratively deleting non-tree edges, we can lower bound the number of simple $g/2$-paths by $\Omega(g(m-n))$, where $m$ is the number of edges. Now, consider any left vertex set $S$ with size $n$ and its neighbor set $T=N_G(S)$. Suppose $T$ doesn't expand well and only has size $|T|\leq (1-\epsilon)dn$, where $d$ is the average degree of $S$. We can sample a random subset $T'\subseteq T$ where each $t\in T$ is selected with independent $p$ probability. Consider the lower and upper bound of the expected number of simple $g/2$-paths in random induced subgraph $H=G[S\cup T']$, we can get an inequality $\Omega(g)\mathbb{E}[|E(H)|-|V(H)|]\leq n^2p^{g/2}$, where $p^{g/4}$ is the probability for each simple $g/2$-path in $G$ to be contained in $H$. By choosing $p$ wisely, we can get a lower bound on $n$. This implies if $n$ is not large enough, we must have at least $(1-\epsilon)dn$ vertex expansion.
\begin{remark}\label{girthcompare}
Our theorem has an additional $\Omega(g)$ factor in the bound, which is asymptotically better than the bound in \cite{hmmp24} in some scenarios. Please also note that here we cannot directly use the classical bipartite irregular Moore bound (\cite{bipartitemoore}) since its proof implicitly requires the minimum degree in $G[S\cup T]$ is at least $2$, which doesn't hold in our case. Our theorems also extend the parameter range to all $d\ge 2$, which is useful in this work since \Cref{thmtwoside} uses $(2,d)$-biregular graphs as base graphs. A similar argument in \cite{hmmp24} only holds for $d\ge 3$, and its proof technique uses some involved spectral method, which is different from our subsampling method here.
\end{remark}
\subsubsection{Constructions}\label{sec:construction}
\paragraph{Large Girth Biregular Graph with Arbitrary Bidegrees and Aspect Ratios}
In this paragraph we sketch the construction of \cref{thmlaze}. From above, we know every graph with a large girth yields good two-sided lossless expansion. The classical construction \cite{lps88} gives construction with girth $\frac{4}{3}\log{n}$, but it is balanced while the ideal construction should fit arbitrary large bidegrees and arbitrary aspect ratio $\beta>0$. To amend this, we introduce the graph family $D(k,q)$ defined from \cite{lazebnik1}. By observing its construction we can easily adapt it to almost arbitrary bidegrees by restricting it to only a subset of its edge index. However, it only yields $\log{n}$ girth. To make it match the large girth of \cite{lps88}, we use the analysis in \cite{lazebnik2} to extract one of its connected components. Focusing only on this single component, we reduce the number of vertices while preserving the girth, and therefore boost the girth to $\frac{4}{3}\log{n}$. However, restricted within a single component, it becomes less clear whether the construction is still strongly explicit. We use the polynomial inverse technique to show that even restricted within a single connected component, this construction is still strongly explicit. 
\paragraph{Tripartite Product}
The explicit construction in \Cref{thmlosslessunique,thmtwoside} both use the \emph{Tripartite Product} framework introduced in \cite{hmmp24}. It is formally defined in \Cref{tripartite}. We first set up three sets of vertices $L\cup M\cup R$ as left/middle/right sets. Then, we prepare two base bipartite graphs $G_1=(L\cup M,E_1)$ and $G_2=(M\cup R,E_2)$ and embed them onto the vertices. Suppose $G_1$ is $D_1$-right-regular and $G_2$ is $D_2$-left-regular, we then prepare a small `gadget graph' $G_0=(L_0\cup R_0,E_0)$ where $|L_0|=D_1,|R_0|=D_2$. The tripartite product is defined as graph $G=(L\cup R,E)$ whose edges are obtained by placing a copy of $G_0$ on each middle vertex and `merge' edges from $G_0,G_1$ and $G_2$.
\begin{remark}
The advantage of the Tripartite Product is that $G_0$ is usually a constant-size graph, so we can use exhaustive search to make $G_0$ as good as random graphs. In our applications here, base graphs $G_1,G_2$ will be large graphs with good spectral expansion, and $G_0$ will be the graph with expansion `as good as random biregular graphs'. This is the crux to transform purely weak spectral expansion of base graphs to strong unique-neighbor expansion or even lossless expansion. We will give a more concrete description below.  
\end{remark}
\paragraph{Lossless Expanders with Polynomial-sized Unique-neighbor Right Expansion} In this paragraph we give a proof overview of \Cref{thmlosslessunique}. We use \cite{lps88} to construct $(D,D)$-biregular Ramanujan graph $G_1=(L\cup M,E_1)$ with $\frac{4}{3}\log{n}$ girth, and use \cite{ow20} to construct $(D,\epsilon D)$-biregular near-Ramanujan graph $G_2=(M\cup R,E_2)$ with $\Omega(\sqrt{\log{n}})$ bicycle-freeness. In terms of gadget graph $G_0$, we need a $(d,d)$-biregular constant-sized graph with optimal two-sided expansion. However, to make use of spectral expansion of $G_2$, we need $d=\Omega(D)$ such that $d>\Omega(\sqrt{\epsilon D^2})$. This motivates us to generalize the existence argument of optimal biregular expander to wider parameter settings (See \Cref{biregularsec}). Our construction will be the tripartite product of $G_1,G_2,G_0$ described above. To show lossless expansion for any fixed $O(n)$-sized left vertex set $S$. We first consider the set $\mathcal{U}$ of all neighbors of $S$ in $G_1$. Let $h=C$ be a large enough constant, we split $\mathcal{U}$ into heavy neighbors $\mathcal{U}_h$ and light neighbors $\mathcal{U}_l$. Heavy neighbors are those incident to at least $h$ vertices in $S$ and light neighbors are all other neighbors. The first filter is to control the number of edges between $S$ and $\mathcal{U}_h$ in $G_1$. It's not hard to use the subgraph density lemma and derive that at most about $O(1/C)$ percentage of edges enter $\mathcal{U}_h$. For most of the edges entering $\mathcal{U}_l$, each of them expands about $d$ unique neighbors in the corresponding $G_0$ copy. On the other hand, each vertex in $\mathcal{U}_l$ has at least about $d>\Omega(\sqrt{\epsilon D^2})$ unique neighbors expanding to the right side. By using the subgraph density argument for spectral expander $G_2$, we derive that most of unique-neighbors in $\mathcal{U}_l$-copies of $G_0$ are also unique neighbors in the whole graph $G$. Combined with the above, we know \emph{most of} edges from $S$ in $G_1$ expand about $d$ unique-neighbors in $G_0$ copies. Furthermore, \emph{most of} these unique-neighbors continue to be unique-neighbors on the right side. These two claims guarantee lossless expansion. The unique-neighbor expansion from right to left follows from a similar argument. However, this time we don't have $d>\Omega(D)$ and cannot make use of the subgraph density argument of spectral expander $G_1$. Alternatively, we use the large girth of $G_1$ to guarantee that most of unique-neighbors of $G_0$ copies continue to be unique-neighbors in the left part. 
\begin{remark}
In the current framework, if one chooses the parameters that yield linear-sized lossless expansion from one side, then it is impossible to get even linear-sized unique-neighbor expansion from the other side if sticking to the same proof strategy. This relates to parameter trade-offs when using subgraph density arguments and the existence of optimal gadget graphs simultaneously. See more details in \Cref{losslesssec}. 
\end{remark}
\paragraph{Two-sided Unique-neighbor Expander with Polynomial-sized $(1/2-\epsilon)$ Expansion} We use the same tripartite product framework as in \Cref{thmlosslessunique}. However, this time we use edge-vertex incidence graphs of \cite{lps88} as the two base graphs. edge-vertex incidence graph of $G$ is formally defined in \Cref{edgevertexdef}. We show that edge-vertex incidence graph of a (near)-Ramanujan graph is still a (near)-Ramanujan graph. Another advantage is that it preserves the property of large girth or bicycle-freeness, which is used to derive expansion. We use a similar proof as above to show that from each side, 
$O(n)$-sized vertex set has a constant fraction of edges entering unique-neighbors. To show two-sided $(1/2-\epsilon)$ unique-neighbor expansion, we use the large girth condition and apply the reduction from girth to expansion for polynomial-sized sets. Note that we only get $(1/2-\epsilon)$ unique-neighbor expansion, not lossless expansion. That's because the edge-vertex incidence graph of a $d$-regular graph is a $(2,d)$-biregular graph and lossless expansion from large girth only holds when the left-degree is large enough.
\begin{remark}
We can get arbitrary aspect ratios in \Cref{thmtwoside} since edge-vertex incidence graphs can be arbitrarily unbalanced. However, the \cite{lps88} construction used in \Cref{thmlosslessunique} has to be regular, which prevents us from using it directly as base graphs in \cref{thmtwoside}.
\end{remark}
\begin{remark}
Edge-vertex incidence graph has $(2,d)$ bidegree, which means we only have an average degree $2$ from one side. Therefore, our `girth to expansion' reduction \Cref{girthtoexpansion} that works for `all degrees at least $2$' is necessary here. The reduction in \cite{hmmp24} built from spectral analysis only works for degrees at least $3$.
\end{remark}
\subsubsection{Auxilary Tools}\label{sec:tool}
\paragraph{Existence of Biregular Graph of Optimal Two-sided Expansion} \cite{hmmp24} adapts the proof in \cite{randombook} from regular graphs to biregular graph settings and shows the existence of biregular graphs with optimal two-sided expansion. However, they only give the existence statement when degree $d=\Theta(\sqrt{n})$. In our proof \Cref{thmlosslessunique}, we need the case for $d=\Omega(n)$. Motivated by this, we generalize their results to all $\omega(\log{n})\leq d\leq O(n)$. See \Cref{smallbiregular,linearbiregular} for more details.
\paragraph{Subgraph Density Bound from Small Second Largest Eigenvalue} We basically use the same subgraph density bound as in \cite{hmmp24}. However, we slightly relax the parameter ranges in the statement in order to match its application in \Cref{thmtwoside}. Moreover, there is a flaw (will be explained in \Cref{spectralsec}) in the original proof of \cite{hmmp24} and we have to do a slightly different analysis to fix it. Therefore, we reprove it in \Cref{spectralsec} for soundness and completeness.

\subsection{Paper Organization}
In \Cref{prelim} we introduce necessary notations, definitions, and some folklore results. In \Cref{girthsec} we give a new reduction from large girth/bicycle-freeness to unique-neighbor (lossless) expansion. In \Cref{twosidesec} we generalize \cite{lazebnik1,lazebnik2} and give the strongly explicit construction presented in \Cref{thmlaze}. In \Cref{losslesssec,edgevertexsec} we give formal proofs of \Cref{thmlosslessunique,thmtwoside} respectively. Finally, we show the existence of biregular graphs with optimal two-sided expansion in \Cref{biregularsec}, which will be used in our proofs.
\section{Preliminaries}\label{prelim}
\subsection{Graph Notations}
For a graph $G$, we use $G=(V,E)$ to denote its vertex set $V$ and edge set $E\subseteq V\times V$, or equivalently $V(G)$ and $E(G)$. When $G$ is bipartite, we sometimes use $G=(L\cup R,E)$ where $L$ and $R$ distinguish its disjoint left-vertex set and right-vertex set respectively. In this case $E\subseteq L\times R$. A graph is $d$-regular iff all its vertices have degree $d$. A bipartite graph $G=(V\cup R,E)$ is $(c,d)$-biregular iff all its left-vertices have degree $c$ and all its right-vertices have degree $d$. A basic equality in this case is $c|L|=d|R|$. For any vertex $v\in V(G)$, we use $\dgr_G(v)$ to denote its degree in $G$. Fix any $n_1,n_2\in \mathbb{Z}^*,0<p<1$, $\mathbb{G}_{n_1,n_2,p}$ is the Erdős–Rényi bipartite graph distribution that samples a bipartite graph $G=(L\cup R,E)\sim \mathbb{G}_{n_1,n_2,p}$, where $|L|=n_1,|R|=n_2$, and each potential edge $(x,y)\in L\times R$ is contained in $E$ with independent $p$ probability.

For an undirected graph $G=(V,E)$, its girth $g(G)$ is the length of its shortest cycle. For any two of its vertices $u,v\in V\times V$, $d_G(u,v)$ denotes the distance between $u$ and $v$ in $G$. For any vertex set $S\subseteq V$ and $u\in V$, $d_G(u,S)=\min_{s\in S}\{d_G(u,s)\}$ denotes the distance from $u$ to the nearest vertex in $S$. we use $G[S]$ to denote the induced subgraph of $G$ on set $S$. $N_G(S)=\{v\colon v\in V(G),\exists s\in S, (v,s)\in E(G)\}$ denote the set of $S$-neighbors. Similarly, for any positive integer $r>0$, $N^r_G(S)=\{v\colon v\in V(G),d_G(v,S)\leq r\}$ denotes the set of vertices with distance at most $r$ from $S$. For any $v\in N_G(S)$, if $v\notin S$ and $v$ has degree $1$ in $G[S\cup\{v\}]$, we call $v$ a unique-neighbor of $S$. We use $\mathsf{UN}_G(S)$ to denote the set of unique-neighbors of $S$ in $G$. We will sometimes omit the subscript $G$ when it is clear from context.

We next define the notion "$r$-bicycle-free" named from \cite{everydegree}:
\begin{definition}[Bicycle-freeness]\label{bicycle}
For any undirected graph $G$ and $r>0$, $G$ is called $r$-bicycle-free iff for any vertex $v\in V(G)$, let $S^r_v=N^r_G(v)$ denote all vertices with at most $r$ distance from $v$, the induced subgraph $G[S^r_v]$ contains at most $1$ cycle. 
\end{definition}
For an undirected $G$, consider its adjacency matrix $A_G$ and $n=|V(G)|$ eigenvalues of $A_G$ (Counting multiplicities): $\lambda_1\ge \lambda_2\ge\dots\ge\lambda_n$, its second largest eigenvalue (absolute value) $\lambda_2=\lambda_2(G)=\max(|\lambda_2|,\dots,|\lambda_n|)$ will be of particular interest for (bi)regular graphs in this paper.
\begin{definition}
A $d$-regular graph $G$ is a Ramanujan graph iff $\lambda_2(G)\leq 2\sqrt{d-1}$. A $(c,d)$-biregular graph $G$ is a Ramanujan graph iff $\lambda_2(G)\leq \sqrt{c-1}+\sqrt{d-1}$.
\end{definition}
\subsection{Number Theory}
\begin{proposition}[Dirchlet's Theorem]\label{dirchlet}
For any two positive coprime integers $a,d$, there are infinitely many primes of form $a+nd$ where $n\in\mathbb{Z}^+$
\end{proposition}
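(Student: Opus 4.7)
The plan is to follow the classical analytic proof of Dirichlet's theorem via $L$-functions and characters modulo $d$, adapting the argument in Serre's \emph{Course in Arithmetic}. The strategy is to isolate the arithmetic progression $a + n d$ using orthogonality of Dirichlet characters, translate counting primes in this progression into a statement about the behaviour of an associated Dirichlet series at $s = 1$, and then show the relevant series diverges, forcing infinitely many primes.

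First I would set up the algebraic machinery. Let $\phi(d)$ denote Euler's totient, and let $\widehat G$ denote the character group of $(\Z/d\Z)^*$; each $\chi \in \widehat G$ is extended to all of $\Z$ by setting $\chi(n) = 0$ when $\gcd(n,d) > 1$. The key orthogonality relation is
\[
\frac{1}{\phi(d)} \sum_{\chi \in \widehat G} \overline{\chi(a)} \chi(n) = \mathbbm{1}[n \equiv a \pmod d],
\]
valid whenever $\gcd(a,d) = 1$. For each $\chi$ I would introduce the Dirichlet $L$-function $L(s,\chi) = \sum_{n \ge 1} \chi(n) n^{-s}$, which converges absolutely for $\mathrm{Re}(s) > 1$ and admits the Euler product $L(s,\chi) = \prod_p (1 - \chi(p) p^{-s})^{-1}$ thanks to complete multiplicativity of $\chi$. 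Taking a principal branch of $\log$ term by term gives $\log L(s,\chi) = \sum_p \chi(p) p^{-s} + R(s,\chi)$, where $R(s,\chi)$ stays bounded as $s \to 1^+$ because it is dominated by $\sum_p \sum_{k \ge 2} p^{-ks}$.

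Next, combining the above with orthogonality yields
\[
\sum_{p \equiv a \,(\mathrm{mod}\, d)} p^{-s} = \frac{1}{\phi(d)} \sum_{\chi \in \widehat G} \overline{\chi(a)} \log L(s,\chi) + O(1)
\quad \text{as } s \to 1^+.
\]
The principal character $\chi_0$ contributes the dominant term: $L(s,\chi_0) = \zeta(s) \prod_{p \mid d}(1 - p^{-s})$, and since $\zeta(s) \sim 1/(s-1)$, we have $\log L(s,\chi_0) \to +\infty$. Thus it suffices to show that for every non-principal $\chi$, the quantity $\log L(s,\chi)$ stays bounded as $s \to 1^+$; then the right-hand side diverges, forcing infinitely many primes $p \equiv a \pmod{d}$.

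The main obstacle — and the technical heart of the proof — is therefore showing $L(1,\chi) \neq 0$ for every non-principal character $\chi$. I would handle the two cases separately. For complex $\chi$ (i.e.\ $\chi \neq \overline\chi$), assuming $L(1,\chi) = 0$ would force $L(1,\overline\chi) = 0$ as well, giving a double zero of $\prod_{\chi} L(s,\chi)$ at $s=1$; combined with the simple pole of $L(s,\chi_0)$, the product would vanish at $s=1$, contradicting the fact (provable by comparison with an Euler-product lower bound on $\prod_\chi L(s,\chi)$ restricted to real $s > 1$) that this product is $\ge 1$ for real $s > 1$. For a real non-principal $\chi$, the standard trick is to study $f(s) = \zeta(s) L(s,\chi)$: one shows that its Dirichlet coefficients are non-negative and that the coefficient at each perfect square is $\ge 1$, which forces divergence of the series defining $f$ at some $s_0 > 1/2$; if $L(1,\chi)$ vanished, $f$ would extend analytically past $s = 1$ down to $s = 1/2$, contradicting the divergence. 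With non-vanishing established, the divergence argument above closes out the proof.
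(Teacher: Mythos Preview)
Your proof sketch is the standard analytic argument for Dirichlet's theorem and is essentially correct. However, the paper does not prove this proposition at all: it is stated in the preliminaries (Section~\ref{prelim}, Number Theory) as a classical result and used as a black box throughout (e.g., in the proofs of \Cref{lps88cons2}, \Cref{thmlosslessunique}, and \Cref{thmtwoside}) to guarantee infinitely many primes in various residue classes. There is nothing to compare against; you have supplied a full proof where the paper simply cites the result.
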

\begin{proposition}[Prime Number Theorem on Arithmetic Progression \cite{pnt}]\label{pnt}
For any coprime positive integers $a,d$, there exists a constant $C$ such that if we use $p_n$ to denote the $n$-th prime of form $a+md,m\in\mathbb{N}$, then  $\lim_{n\to+\infty}\frac{p_n}{Cn\ln{n}}=1$
\end{proposition}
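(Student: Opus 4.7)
The plan is to derive the ordinal asymptotic from the standard counting form of the Prime Number Theorem on Arithmetic Progressions, namely $\pi(x; d, a) \sim \frac{1}{\varphi(d)} \cdot \frac{x}{\ln x}$, where $\pi(x; d, a)$ counts primes $p \le x$ with $p \equiv a \pmod{d}$. This will force the constant to be $C = \varphi(d)$. Inversion is routine: setting $x = p_n$ gives $n \sim \frac{p_n}{\varphi(d)\, \ln p_n}$, hence $p_n \sim \varphi(d)\, n\, \ln p_n$, and a one-step bootstrap via $\ln p_n = \ln n + \ln\ln n + O(1)$ replaces $\ln p_n$ with $\ln n$ up to lower order, producing the claimed limit.

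The real content is therefore the counting form, which I would prove by Dirichlet's method via $L$-functions. First I introduce the multiplicative characters of $(\Z/d\Z)^{*}$ and their $L$-functions $L(s,\chi) = \sum_{n \ge 1} \chi(n)\, n^{-s}$, analytic for $\mathrm{Re}(s) > 1$. Using orthogonality of characters to sift out the residue class $a$, the Chebyshev-weighted count reorganizes as
\[
\sum_{\substack{n \le x \\ n \equiv a \,(\mathrm{mod}\, d)}} \Lambda(n) \;=\; \frac{1}{\varphi(d)} \sum_{\chi} \overline{\chi(a)}\, \psi(x,\chi), \qquad \psi(x,\chi) := \sum_{n \le x} \chi(n)\, \Lambda(n),
\]
where $\Lambda$ is the von Mangoldt function. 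The principal character $\chi_0$ contributes the dominant term $\psi(x,\chi_0) = x + o(x)$, which is essentially the classical PNT; the task then reduces to showing $\psi(x,\chi) = o(x)$ for every non-principal $\chi$, after which a standard partial-summation pass converts the $\Lambda$-weighted statement into the $\pi(x;d,a)$ asymptotic.

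The main obstacle is the analytic non-vanishing statement: for every non-principal Dirichlet character $\chi$, $L(s,\chi)$ has no zero on the vertical line $\mathrm{Re}(s) = 1$. For complex $\chi$ this follows from the non-negativity identity $3 + 4\cos\theta + \cos 2\theta \ge 0$ applied to $|L(\sigma,\chi_0)^3\, L(\sigma+it,\chi)^4\, L(\sigma+2it,\chi^2)|$ as $\sigma \to 1^{+}$; for a real non-principal $\chi$ one needs the separate and historically deeper fact $L(1,\chi) \neq 0$, derivable via Dirichlet's class number formula or by a direct analytic argument. Once non-vanishing on $\mathrm{Re}(s)=1$ is secured, a Wiener--Ikehara Tauberian theorem, or equivalently a Perron-style contour shift past this line, yields $\psi(x,\chi) = o(x)$ for every non-principal $\chi$. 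Substituting back and performing the inversion above then establishes the proposition with $C = \varphi(d)$.
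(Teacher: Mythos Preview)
Your proposal outlines a correct and standard derivation: invert the counting form $\pi(x;d,a)\sim x/(\varphi(d)\ln x)$ to obtain $p_n\sim \varphi(d)\,n\ln n$, with the counting form itself established via Dirichlet $L$-functions and non-vanishing on $\mathrm{Re}(s)=1$. The sketch is sound, and the identification $C=\varphi(d)$ is the right constant.

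However, there is nothing to compare against. The paper does not supply its own proof of this proposition: it is quoted as a classical result with a citation to the literature (the reference tagged \texttt{pnt}) and then used as a black box, specifically in the proof of \Cref{thmtwoside} to argue that $p_{\beta n}/p_n\to\beta$. So your write-up is strictly more than what the paper provides for this statement.
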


\section{Expansion from Girth and Bicycle-freeness}\label{girthsec}
In this section, we aim to show \Cref{girthtoexpansion,bicycletoexpansion}, that in any bipartite graph with large girth or bicycle-freeness, any small enough left (right) vertex set with average-degree $d$ should have good vertex-expansion to the other side. Our theorems strengthen the similar arguments of \cite{kahale95,hmmp24} in two facets. First, they yield an additional $g$ factor and give an asymptotically better bound. (where $g$ denotes the girth/bicycle-freeness). Second, our theorems hold for all $d\ge 2$, but the reduction in \cite{hmmp24} only works for $d\ge 3$. This is useful in our proof of \Cref{thmtwoside}, where we use the edge-vertex incidence graph whose left vertices have degree $2$.

Our proofs are from analyzing random induced subgraphs.
\begin{lemma}\label{weakcounting}
Fix any $g$ and $k\leq \lfloor\frac{g}{4}\rfloor$, Let $G=(L\cup R,E)$ be a bipartite graph with $n=|L|+|R|$ vertices, $|E|=m\ge n$ edges, and girth at least $g+1$. then, there is an edge $e\in E$ and $k$ different simple paths containing $e$ that starts from some vertex in $L$, ends at some vertex in $L$, and has length $2k$.   
\end{lemma}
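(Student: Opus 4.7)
The plan is to locate any cycle in $G$ and exhibit the required paths along it. Since $m \geq n$, the graph $G$ cannot be a forest and hence must contain at least one cycle $C$. By the girth hypothesis, the length $\ell$ of $C$ satisfies $\ell \geq g + 1 \geq 4k + 1$, and $\ell$ is even because $G$ is bipartite. In particular $\ell \geq 2k + 1$, which is exactly what is needed below to keep length-$2k$ subpaths of $C$ simple.

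Next I would pick an arbitrary edge $e$ of $C$, label its endpoints as $v_0 \in L$ and $v_1 \in R$ (without loss of generality), and extend cyclically to $v_0, v_1, v_2, \ldots, v_{\ell - 1}$. For each offset $j \in \{0, 1, \ldots, 2k - 1\}$, define the sequence $P_j := (v_{-j}, v_{-j+1}, \ldots, v_{2k-j})$ with indices taken modulo $\ell$. Since $\ell \geq 2k + 1$, the $2k + 1$ vertices of $P_j$ are pairwise distinct, so $P_j$ is a simple path of length $2k$; it contains $e$, since $e = (v_0, v_1)$ appears as its $(j+1)$-th edge. Distinct offsets yield distinct vertex sets, so the $2k$ paths $P_0, \ldots, P_{2k - 1}$ are pairwise different as subgraphs.

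The final step is a parity observation. Because $C$ alternates between $L$ and $R$ and $v_0 \in L$, we have $v_i \in L$ iff $i$ is even. The starting vertex $v_{-j}$ of $P_j$ therefore lies in $L$ iff $j$ is even, and exactly $k$ offsets in $\{0, 1, \ldots, 2k - 1\}$ are even; for each such $j$, the other endpoint $v_{2k - j}$ is also in $L$ since $2k$ is even and $G$ is bipartite. These yield the required $k$ pairwise-distinct simple paths of length $2k$ through $e$ with both endpoints in $L$. No serious obstacle is foreseeable: the girth bound is doing nothing more than forcing any cycle to be long enough to host $2k$ genuinely distinct length-$2k$ subpaths through $e$, and the bipartite parity of $C$ evenly splits those candidates between paths starting in $L$ and paths starting in $R$, giving exactly the count the lemma demands.
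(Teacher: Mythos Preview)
Your proposal is correct and follows essentially the same approach as the paper: locate a long cycle $C$ (guaranteed by $m\ge n$ and the girth bound), fix an edge $e$ on it, and slide a length-$2k$ window along $C$ through $e$, keeping those windows whose endpoints land in $L$. The paper phrases this as picking the $k$ left-vertices $u_0,\dots,u_{k-1}$ reached by at most $2k$ clockwise steps from $u$ and taking the counter-clockwise length-$2k$ path from each; your offset parametrisation with the parity filter is the same construction written out more explicitly.
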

\begin{proof}
Since $m\ge n$, $G$ has some simple cycle $C$ with length at least $g+1$. Consider any edge $e=(u,v)$  on the cycle where $u\in L,v\in R$ and define the direction from $u$ to $v$ as clockwise on $C$. We can find at least $k$ vertices $u_0=u,u_1,\dots,u_{k-1}\in L$ on the cycle that can be reached from $u$ by at most $2k$ clockwise cycle-edges. For each $u_i$, the counter-clockwise cycle-path from $u_i$ with length $2k$ gives a $2k$ simple path containing $e$, and all these paths are distinct. These give us $k$ simple paths with length $2k$ containing $e$ as desired.
\end{proof}
We then give a similar argument for bicycle-free bipartite graphs.
\begin{lemma}\label{weakcountingbicycle}
Fix any $g$ and $k\leq \lfloor\frac{g}{2}\rfloor$, Let $G=(L\cup R,E)$ be a $g$-bicycle-free bipartite graph with $n=|L|+|R|$ vertices and $|E|=m\ge n+1$ edges. then, there is an edge $e\in E$ and $k$ different simple paths containing $e$ that starts from some vertex in $L$, ends at some vertex in $L$, and has length $2k$.   
\end{lemma}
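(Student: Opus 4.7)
The plan is to mirror the strategy of Lemma \ref{weakcounting}, but leveraging the strengthened hypothesis $m\geq n+1$ (which forces the cyclomatic number $m-n+c\geq 2$, so $G$ has at least two ``independent'' cycles) together with the $g$-bicycle-free condition. The argument splits into two cases according to whether $G$ contains a sufficiently long cycle.

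\textbf{Case 1: $G$ contains a simple cycle $C$ of length at least $2k+1$.} Then the construction of Lemma \ref{weakcounting} applies verbatim. Pick any edge $e=(u,v)$ on $C$ with $u\in L$. Since $|C|\geq 2k+1$, I can identify $k$ distinct vertices $u_1,\dots,u_k\in L$ on $C$, where $u_i$ is reached from $u$ by $2i$ clockwise cycle-edges. From each $u_i$, walking counter-clockwise along $C$ for $2k$ edges yields a simple path (since the walk never wraps around the cycle) of length $2k$ that passes through $e$; the $k$ such paths are distinct and each begins and ends in $L$ by the bipartite parity.

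\textbf{Case 2: Every simple cycle of $G$ has length at most $2k\leq g$.} I claim this forces any two distinct cycles in $G$ to be vertex-disjoint. Indeed, if cycles $C_1,C_2$ shared a vertex $v$, both cycles (each of length $\leq g$) would lie inside $G[S^g_v]$, since a cycle of length at most $g$ is contained in a $g$-ball around any of its vertices; this contradicts the $g$-bicycle-free assumption. Since the cycle rank is at least $2$, I obtain two vertex-disjoint cycles $C_1,C_2$. If $C_1,C_2$ lie in the same connected component, then the shortest connecting path $P$ satisfies $|P|\geq g-2k+1$: otherwise the $g$-ball around a vertex $v_1\in C_1$ nearest to $C_2$ would contain all of $C_2$ (because distance from $v_1$ to any vertex of $C_2$ is at most $|P|+|C_2|/2\leq g$), again violating bicycle-freeness. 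This produces a ``long dumbbell'' structure: two short cycles connected by a long path whose endpoints $v_1\in C_1$ and $v_2\in C_2$ have degree at least $3$ in $G$. I then pick an edge $e$ on $P$ roughly equidistant from $v_1$ and $v_2$. Among the $k$ possible offsets of $e$ inside a length-$2k$ simple path (determined by the bipartite parity), the ``before'' and ``after'' segments can reach the branching endpoints, and the two directional choices at each $v_i$ (into the respective cycle) combine with the positional freedom along $P$ to yield at least $k$ distinct simple paths of length $2k$ through $e$, all starting and ending in $L$.

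The main obstacle will be completing Case 2 with a tight counting argument: I must carefully track how many distinct ``before'' and ``after'' extensions are available at each permissible length, accounting for $|C_1|,|C_2|,|P|$, and the bipartite parity, and separately address the subcase in which $C_1,C_2$ live in different connected components (where the cycle rank inequality $m-n+c\geq 2$ still provides enough extra edges to force the required branching inside a single component). The ``long dumbbell'' geometry forced by bicycle-freeness is what makes the construction work, and the whole argument only needs paths of length up to $g$, matching the weaker bound $k\leq\lfloor g/2\rfloor$ compared to Lemma \ref{weakcounting}.
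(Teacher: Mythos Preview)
Your Case~1 is fine, but Case~2 has a real gap: the bound $|P|\ge g-2k+1$, obtained by centering the $g$-ball at the endpoint $v_1$, is too weak. With only that bound you cannot rule out, e.g., $g=2k$, $|C_1|=|C_2|=4$, $|P|=1$---a dumbbell whose longest simple path has length $7$, so for $k\ge 4$ there is no simple path of length $2k$ at all, and the ``two directional choices at each $v_i$'' cannot manufacture one. (That configuration is not actually $g$-bicycle-free, but your bound does not detect this.) The missing idea is to center the ball at an \emph{interior} point: if $w$ lies on $P$ at distance $a$ from $v_1$, then $C_1\cup C_2\subseteq N_G^g(w)$ whenever both $a+|C_1|/2\le g$ and $(|P|-a)+|C_2|/2\le g$, and such an $a$ exists unless $|P|>2g-|C_1|/2-|C_2|/2$. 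With $|C_i|\ge 4$ this yields a simple path in the dumbbell of length $(|C_1|-1)+|P|+(|C_2|-1)\ge 2g+3\ge 4k+3$, after which the Lemma~\ref{weakcounting} argument applies directly to that path and your case split becomes unnecessary.

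The paper carries out exactly this center idea, packaged more cleanly and without any case split: it takes the minimal subtree $T$ of a spanning forest spanning $V(C_1)\cup V(C_2)$, notes that $G[V(T)]$ contains both cycles, and uses the tree-center fact (radius $\le\lceil\mathrm{diam}(T)/2\rceil$) to conclude $\mathrm{diam}(T)\ge 2g+1$, hence a simple path of length at least $2g+1$ in $G$. The pigeonhole step---there are $m-n+C\ge C+1$ non-forest edges distributed over $C$ components, so two land in the same component---also cleanly disposes of your unresolved multi-component subcase.
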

\begin{proof}
Let $C$ denote the number of components in $G$, and the maximal spanning forest of $G$ contains $n-C$ edges. Therefore, fix any maximal spanning forest $F$, there are at least $m-(n-C)\ge C+1$ edges not in the forest. By pigeonhole principle, there exists two of them $e_1,e_2$ in the same component. Each of them defines a cycle in the component, we call them $C_1,C_2$, and let $S=V(C_1)\cup V(C_2)$ denote the set of related vertices. Let $T$ denote the minimal connected subgraph of $F$ that contains $S$. Since $F$ is a forest and vertices in $S$ are from the same component of $F$, $T$ is a well-defined tree. Let $U=V(T)$, we know $G[U]$ has at least two cycles $C_1$ and $C_2$. Suppose $T$ has a diameter at most $2g$, then there exists a vertex $v\in U$ such that all vertices in $U$ have distance at most $g$ from $v$. It implies $U\subseteq N_G^g(v)$ and by above, $G$ is not $g$-bicycle-free, which violates the precondition. Therefore, $T$ has a diameter of at least $2g+1$ and there is a simple path $P$ of length at least $2g+1$ in $G$. By a similar argument as in \Cref{weakcounting}, we can find an edge $e$ and $k$ different simple paths containing $e$ that starts from and ends at $L$ of length $2k$ on $P$
\end{proof}
With \Cref{weakcounting,weakcountingbicycle}, we can build a counting lemma to lower bound the number of long simple paths.
\begin{lemma}[Counting Lemma]\label{medcounting}
Fix any $g$ and $k\leq \lfloor\frac{g}{4}\rfloor$, Let $G=(L\cup R,E)$ be a bipartite graph with $n=|L|+|R|$ vertices, $|E|=m$ edges, and girth at least $g+1$. then, there are at least $k(m-n+1)$ simple paths that starts from some vertex in $L$, ends at some vertex in $L$, and has length at most $2k$. 
\end{lemma}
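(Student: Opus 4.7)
The plan is to prove the bound by strong induction on the number of edges $m$, peeling off one non-tree edge at a time and invoking Lemma \ref{weakcounting} at each step.

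The base case handles $m \leq n-1$: the claimed bound $k(m-n+1) \leq 0$ is vacuous and the statement holds trivially. For the inductive step, assume $m \geq n$, so that the hypothesis of Lemma \ref{weakcounting} is satisfied. The lemma then furnishes an edge $e \in E(G)$ together with $k$ distinct simple paths, each of length exactly $2k$, each starting and ending in $L$, and each containing $e$. Form $G' := G - e$: it has $n$ vertices, $m - 1$ edges, and girth at least $g+1$ (deleting edges cannot shorten the girth, and the bipartition is preserved). Apply the inductive hypothesis to $G'$ to obtain at least $k((m-1) - n + 1) = k(m-n)$ simple paths of length at most $2k$ starting and ending in $L$ inside $G'$.

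The key observation is that every path counted for $G'$ lives in $G - e$ and therefore avoids $e$, while every path supplied by Lemma \ref{weakcounting} contains $e$; the two families are automatically disjoint, and both consist of simple paths of length at most $2k$ in $G$ starting and ending in $L$. Summing the two contributions yields at least $k(m-n) + k = k(m-n+1)$ such paths, which completes the induction.

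I expect this argument to be essentially routine once Lemma \ref{weakcounting} is in hand — the real combinatorial content is already encapsulated there (finding a long cycle in a bipartite graph with $m \geq n$ and girth $\geq g+1$, then reading off $k$ distinct length-$2k$ simple paths through a single chosen edge by sliding around the cycle). The only mild bookkeeping in the present lemma is to verify that deleting $e$ keeps the girth hypothesis and the bipartition intact so that the inductive hypothesis applies verbatim to $G'$; both are immediate. No subtler issue (e.g.\ uniqueness of paths or careful accounting) arises because we only ever compare paths containing a specific deleted edge against paths of a graph from which that edge has been removed.
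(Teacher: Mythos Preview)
Your proof is correct and follows essentially the same approach as the paper's: both argue by induction on $m$, invoking Lemma~\ref{weakcounting} to extract an edge $e$ with $k$ length-$2k$ paths through it, deleting $e$, and combining with the inductive count on $G-e$ via the disjointness of ``paths containing $e$'' versus ``paths in $G-e$''. The only cosmetic difference is that you take $m\le n-1$ as the (vacuous) base case whereas the paper starts the induction at $m=n$.
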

\begin{proof}
By \Cref{weakcounting}, the statement is true when $m=n$. Now fix any $C\ge 0$ and let's suppose for any $m\leq n+C$ the statement is true, we will show the statement is also true for $m=n+C+1$ and finish the proof by induction.

Suppose $m=n+C+1$, then by \Cref{weakcounting} there is an edge $e$ and $k$ simple paths $p_1,\dots,p_k$ containing $e$ satisfying the conditions. Let $P=\{p_1,\dots,p_k\}$ and delete $e$ from $G$, we will get graph $G'=(L\cup R,E'=E-\{e\})$ where $|E'|=m-1=n+C$. By inductive assumption, $G'$ has at least $k(C+1)$ such paths and none of them is in $P$ since all paths in $P$ contain $e$. Therefore, $G$ has at least $k(C+1)+|P|=k(C+2)$ such paths, we are done.
\end{proof}

By a similar induction, we can use \Cref{weakcountingbicycle} to get a similar counting lemma for $g$-bicycle-free graphs as follows:

\begin{lemma}[Counting Lemma from Bicycle-freeness]\label{medcountingbicycle}
Fix any $g$ and $k\leq \lfloor\frac{g}{2}\rfloor$, Let $G=(L\cup R,E)$ be a $g$-bicycle-free bipartite graph with $n=|L|+|R|$ vertices and $|E|=m$ edges. then, there are at least $k(m-n)$ simple paths that starts from some vertex in $L$, ends at some vertex in $L$, and has length at most $2k$. 
\end{lemma}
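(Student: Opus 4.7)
The plan is to run essentially the same induction on the edge count $m$ that drives Lemma~\ref{medcounting}, with Lemma~\ref{weakcountingbicycle} taking the place of Lemma~\ref{weakcounting}. The only substantive difference is a one-unit shift at the base: Lemma~\ref{weakcountingbicycle} requires $m \geq n+1$ to produce an edge together with $k$ distinct simple length-$2k$ paths through it, whereas Lemma~\ref{weakcounting} already fires at $m = n$. This mismatch is precisely what turns the target count from $k(m-n+1)$ into $k(m-n)$.

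For the base case I would simply observe that whenever $m \leq n$ the bound $k(m-n) \leq 0$ is vacuous. For the inductive step, fix $C \geq 0$, assume the statement for every $g$-bicycle-free bipartite graph on $n$ vertices with at most $n+C$ edges, and consider $G = (L \cup R, E)$ with $m = n+C+1 \geq n+1$. Apply Lemma~\ref{weakcountingbicycle} to obtain an edge $e \in E$ together with $k$ pairwise distinct simple paths $p_1, \ldots, p_k$, each of length $2k$, each passing through $e$, and each starting and ending in $L$. Let $G' = (L \cup R, E \setminus \{e\})$, a graph on $n$ vertices with $n+C$ edges.

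The one verification needed before invoking the inductive hypothesis on $G'$ is that $G'$ is still $g$-bicycle-free. This is immediate: for every vertex $v$, the ball $N_{G'}^g(v) \subseteq N_G^g(v)$, and the induced subgraph $G'[N_{G'}^g(v)]$ is a subgraph of $G[N_G^g(v)]$, so it cannot contain more cycles than the latter, which by hypothesis contains at most one. By induction $G'$ therefore contains at least $k((n+C) - n) = kC$ valid simple paths, and since none of these use $e$ they are automatically distinct from $p_1, \ldots, p_k$. Combining the two families yields $kC + k = k(C+1) = k(m-n)$ valid paths in $G$, closing the induction. Since the proof is a mechanical adaptation of Lemma~\ref{medcounting}, there is no real obstacle beyond the one-line check that edge deletion preserves bicycle-freeness and the parameter bookkeeping that accounts for the shift from $m\geq n$ to $m\geq n+1$ in the underlying weak-counting lemma.
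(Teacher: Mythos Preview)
Your proposal is correct and is exactly the induction the paper intends: the paper does not even spell out a proof of Lemma~\ref{medcountingbicycle}, merely stating that it follows by the same induction as Lemma~\ref{medcounting} with Lemma~\ref{weakcountingbicycle} in place of Lemma~\ref{weakcounting}. Your write-up fills in the details (including the minor observation that edge deletion preserves $g$-bicycle-freeness) and correctly accounts for the base-case shift responsible for $k(m-n)$ rather than $k(m-n+1)$.
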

With the help of \Cref{medcounting,medcountingbicycle}, we are ready to derive reductions from large girth/bicycle-freeness to expansion:
\begin{theorem}\label{girthtoexpansion}
Fix any $\epsilon\in[0,1],1<d'<\epsilon d$, there exists a constant $\delta=\delta(\epsilon,d,d')>0$ such that the following holds: For any $g>1$, let $G=(L\cup R,E)$ be a bipartite graph whose left average degree is $d$ with girth at least $g+1$, we have:

For any $S\subseteq L$ such that $|S|\leq \delta g d'^{\lfloor g/4\rfloor}$, there is $|N(S)|\ge (1-\epsilon)d|S|$
\end{theorem}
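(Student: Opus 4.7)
The plan is to argue by contradiction via a probabilistic subsampling argument combined with \Cref{medcounting}. Write $n := |S|$, $T := N(S)$, and $k := \lfloor g/4 \rfloor$, and assume each $s \in S$ has degree at least $d$ (as holds for the left-regular graphs used in the paper's applications), so that $e(S) := \sum_{s \in S}\deg_G(s) \ge dn$. Suppose for contradiction that $|T| < (1-\epsilon) dn$.

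The core step is to sample $T' \subseteq T$ by including each vertex of $T$ independently with probability $p := 1/d'$, and form the induced subgraph $H := G[S \cup T']$. Since $H$ is a subgraph of $G$, its girth is still at least $g+1$, so \Cref{medcounting} applies. A direct computation gives
\[
\Ex\bigl[|E(H)| - |V(H)| + 1\bigr] = p\,e(S) - n - p|T| + 1 \ge p\bigl(dn - |T|\bigr) - n + 1 > \bigl(p\epsilon d - 1\bigr)\, n,
\]
which is $\Omega(n)$ since $p\epsilon d = \epsilon d/d' > 1$ by the assumption $d' < \epsilon d$.

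The key observation, which I would extract by inspecting the proofs of \Cref{weakcounting,medcounting}, is that every simple path produced there actually has length \emph{exactly} $2k$ rather than merely at most $2k$: each one arises from traversing a $2k$-arc of some cycle of length $\ge g+1 \ge 4k+1$, and the induction in \Cref{medcounting} only ever appends paths of this form. With this strengthening, linearity of expectation gives that the expected number of length-$2k$ simple paths in $H$ whose endpoints lie in $L(H) = S$ is at least $k(p\epsilon d - 1)n$. For the matching upper bound I would count directly in $G$: any length-$2k$ simple path with all $L$-vertices in $S$ survives in $H$ iff each of its $k$ internal $R$-vertices lands in $T'$, an event of probability $p^k$; moreover, any two distinct such paths sharing a common pair of $L$-endpoints would combine into a cycle of length at most $4k \le g$, contradicting the girth hypothesis, so there are at most $n^2$ of them in $G$. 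Hence the expected count is at most $n^2 p^k = n^2 / d'^{\,k}$.

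Comparing the two bounds gives $k(p\epsilon d - 1)\, n \le n^2 / d'^{\,k}$, which rearranges to $n \ge k(\epsilon d/d' - 1)\cdot d'^{\,k}$. Since $k = \lfloor g/4\rfloor = \Theta(g)$, choosing $\delta = \delta(\epsilon,d,d') > 0$ to be a small constant multiple of $\epsilon d/d' - 1$ contradicts the hypothesis $n \le \delta\, g\, d'^{\lfloor g/4\rfloor}$, completing the argument. I expect the main obstacle to be the strengthening of \Cref{medcounting} from ``length at most $2k$'' to ``length exactly $2k$''; without it, the upper bound would degrade from $n^2 p^k$ to $n^2 \sum_{j=1}^{k} p^j = \Theta(n^2 p)$, producing only the weaker bound $n \gtrsim k d'$ and destroying the desired Moore-type exponential factor $d'^{\,k}$.
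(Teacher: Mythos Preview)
Your proposal is correct and follows essentially the same approach as the paper: the subsampling of $T$ with probability $p=1/d'$, the application of \Cref{medcounting} for the lower bound on the expected path count, the girth-based pigeonhole argument that distinct length-$2k$ paths must have distinct endpoint pairs, and the final comparison are all identical to the paper's proof. Your observation that \Cref{medcounting} must be read as producing paths of length \emph{exactly} $2k$ is spot on and is exactly what the paper silently does---it defines $\mathcal{P}_{G_S},\mathcal{P}_H$ as the sets of length-$2k$ paths and applies the counting lemma to them, relying (as you note) on the fact that the proofs of \Cref{weakcounting,medcounting} only ever generate paths of length exactly $2k$.
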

\begin{proof}
Let $k=\lfloor g/4\rfloor$. Suppose by contradiction there is $S\subseteq L$ with $|S|\leq \delta gd'^{\lfloor g/4\rfloor}$ and $|N(S)|\leq (1-\epsilon)d|S|$, where $\delta$ to be determined, let $T=N(S)$ denote the set of $S$-neighbors and $G_S=G[S\cup T]$ be the related induced subgraph. Define $p=\frac{1}{d'}\leq 1$, we will consider a random induced subgraph of $G_S$ sampled as follows: Let $T'\subseteq T$ denote a random subset of vertices in $T$ such that we independently choose each $v\in T$ into $T'$ with $p$ probability, and $H=G_S[S\cup T']$ is the corresponding random induced subgraph of $G_S$. Let $\mathcal{P}_{G_S}$ and $\mathcal{P}_{H}$ denote the set of length-$2k$ simple paths whose starting and ending vertices are both in $L$ in $G_S$ and $H$ respectively, we can get the following bound by using linearity of expectation on each such path in $G_S$:
\begin{align}\label{pathupperbound}
\mathbb{E}_H[|\mathcal{P}_{H}|]\leq \sum_{P\in \mathcal{P}_{G_S}}\mathbb{E}_H[P\subseteq H]\leq |\mathcal{P}_{G_S}|p^k
\end{align}
On the other side, from \Cref{medcounting} we know $|\mathcal{P}_{H}|\ge k(|E(H)-V(H)|)$. Thus we have:
\begin{align}
\mathbb{E}_H[|\mathcal{P}_{H}|]&\ge k(\sum_{e\in E(G_S)}\mathbb{E}_H[e\in E(H)]-|S|-\sum_{t\in T}\mathbb{E}_H[t\in V(H)])\\
&\ge k(|S|dp-|S|-(1-\epsilon)d|S|p)\\
&\ge k|S|(\epsilon dp-1)\label{pathlowerbound}
\end{align}
Combine \Cref{pathupperbound} and \Cref{pathlowerbound}, we can get $|\mathcal{P}_{G_S}|\ge p^{-k}k|S|(\epsilon dp-1)$.
For any unordered pair $(u,v)\in S^2$, if there are two distinct length-$2k$ simple paths $P_1,P_2\in \mathcal{P}_{G_S}$ and both of them start from $u$ and end at $v$, then $P_1\cup P_2$ has to contain a simple cycle of length at most $4k\leq g$, which violates the girth condition. Therefore, all paths in $\mathcal{P}_{G_S}$ have distinct starting/ending vertex pairs. There are only at most $\frac{|S|^2}{2}$ such pairs. Therefore, by pigeonhole principle, we get $|\mathcal{P}_{G_S}|\leq \frac{|S|^2}{2}$. Combine with above lower bound, we get $p^{-k}k|S|(\epsilon dp-1)\leq |\mathcal{P}_{G_S}|\leq \frac{|S|^2}{2}$.

By above, we get $|S|\ge 2(\epsilon dp-1)kp^{-k}>\frac{(\frac{\epsilon d}{d'}-1)}{5}gd'^{\lfloor g/4\rfloor}$. Let $\delta=\delta(\epsilon,d,d')=(\frac{\epsilon d}{d'}-1)/5>0$, this bound violates the precondition $|S|\leq \delta gd'^{\lfloor g/4\rfloor}$. Therefore we must have $|N(S)|>(1-\epsilon)d|S|$.
\end{proof}
By a very similar argument as above combined with \Cref{medcountingbicycle}, we can get a similar expansion from $g$-bicycle-freeness.
\begin{theorem}\label{bicycletoexpansion}
Fix any $\epsilon\in[0,1],1<d'<\epsilon d$, there exists a constant $\delta=\delta(\epsilon,d,d')>0$ such that the following holds: For any $g>1$, let $G=(L\cup R,E)$ be a $g$-bicycle-free bipartite graph whose left average degree is $d$, we have:

For any $S\subseteq L$ such that $|S|\leq \delta g d'^{\lfloor g/2\rfloor}$, there is $|N(S)|\ge (1-\epsilon)d|S|$
\end{theorem}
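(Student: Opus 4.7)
The plan is to mirror the proof template of \Cref{girthtoexpansion} almost verbatim, replacing the girth condition and \Cref{medcounting} with the bicycle-free condition and \Cref{medcountingbicycle}, and weakening only the single step that exploits graph locality. Set $k = \lfloor g/2 \rfloor$ and $p = 1/d'$, and suppose toward contradiction that there is $S\subseteq L$ with $|S|\le \delta g (d')^{k}$ but $|N(S)| \le (1-\epsilon) d|S|$. Let $T = N(S)$, $G_S = G[S\cup T]$, and sample $T'\subseteq T$ independently with probability $p$ per vertex, writing $H = G_S[S\cup T']$. Since distances in the subgraph $H$ are at least those in $G$, every $g$-neighborhood in $H$ is contained in the corresponding $g$-neighborhood in $G$, so $H$ is still $g$-bicycle-free, and \Cref{medcountingbicycle} is legal to invoke on $H$.

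The double counting of length-$2k$ simple paths with both endpoints in $L$ then goes exactly as in \Cref{girthtoexpansion}. Each such path in $G_S$ contains exactly $k$ right vertices, so it survives into $H$ with probability $p^k$, giving
\[
\mathbb{E}[|\mathcal{P}_H|] \le |\mathcal{P}_{G_S}|\, p^{k}.
\]
For the lower bound, \Cref{medcountingbicycle} gives $|\mathcal{P}_H| \ge k(|E(H)|-|V(H)|)$ whenever the right-hand side is nonnegative and is trivially dominated by $|\mathcal{P}_H|\ge 0$ otherwise, so the inequality holds unconditionally. Taking expectations with $\mathbb{E}[|E(H)|]=d|S|p$ and $\mathbb{E}[|V(H)|]\le |S|+(1-\epsilon)d|S|p$ yields
\[
\mathbb{E}[|\mathcal{P}_H|] \ge k|S|(\epsilon dp - 1),
\]
which is positive precisely because $d' < \epsilon d$.

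The only real departure from the girth proof is bounding $|\mathcal{P}_{G_S}|$ from above, since bicycle-freeness is strictly weaker than a girth condition. The observation to use is that a length-$2k$ simple path from $u\in S$ lies entirely in $N_G^{2k}(u)\subseteq N_G^{g}(u)$, and by bicycle-freeness the induced subgraph $G[N_G^{g}(u)]$ is a (sub)unicyclic graph, in which any two vertices are joined by at most two simple paths. Summing over unordered endpoint pairs in $S$ gives $|\mathcal{P}_{G_S}|\le 2\binom{|S|}{2} \le |S|^{2}$. Combining this with the two bounds above produces $|S|\ge k(\epsilon dp-1)(d')^{k}$, and choosing $\delta(\epsilon,d,d')>0$ smaller than a constant multiple of $(\epsilon d/d'-1)$ contradicts the assumption $|S|\le \delta g (d')^{\lfloor g/2\rfloor}$. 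The main obstacle — and essentially the only nontrivial point — is the unicyclic ``at most two paths per pair'' bound; once that is in place, the expectation calculation, the verification that $H$ remains $g$-bicycle-free, and the choice of $\delta$ are all routine parallels of the girth argument.
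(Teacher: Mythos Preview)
Your proposal is correct and follows essentially the same approach as the paper's own proof. The paper likewise sets $k=\lfloor g/2\rfloor$, replaces \Cref{medcounting} by \Cref{medcountingbicycle}, and identifies the only nontrivial change as the upper bound on $|\mathcal{P}_{G_S}|$: it argues that three distinct length-$2k$ paths between a fixed pair $(u,v)$ would all lie in $N^g(u)$ and force two cycles there, contradicting $g$-bicycle-freeness, hence at most two paths per unordered pair and $|\mathcal{P}_{G_S}|\le |S|^2$—exactly your ``unicyclic $\Rightarrow$ at most two simple $u$--$v$ paths'' observation, stated contrapositively.
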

\begin{proof}
Let $k=\lfloor g/2\rfloor$, and all other notations the same as proof of \Cref{girthtoexpansion}. We use basically the same arguments and replace \Cref{medcounting} with \Cref{medcountingbicycle}. The only difference is the upper bound of $|\mathcal{P}_{G_S}|$. For any unordered pair $(u,v)\in S^2$, if there are three distinct length-$2k$ simple paths $P_1,P_2,P_3\in \mathcal{P}_{G_S}$ and all of them start from $u$ and ends at $v$, then $U=V(P_1)\cup V(P_2)\cup V(P_3)$ must be contained in $N_{G_S}^g(u)$ and thus $G[N_{G_S}^g(u)]$ contains at least two simple cycles by case analysis, which violates $g$-bicycle-freeness. Therefore we get $|\mathcal{P}_{G_S}|\leq |S|^2$. All other parts are similar to the proof above.
\end{proof}
\section{Strongly Explicit Construction of Two-sided Lossless Expander}\label{twosidesec}
In this section, we will give a strongly explicit construction of two-sided lossless expanders with arbitrary aspect ratio $\beta>0$ and arbitrary large bidegree for vertex set with size up to $n^{1/3-\epsilon}$, which matches the expansion derived from the explicit graph family with the largest known girth $\frac{4}{3}\log{n}$

First, we need the biregular bipartite graph $D(k,q)$ defined in \cite{lazebnik1} which has a large girth and good structure.

The \emph{incidence structure} $(P,L,I)$ is a triple where $P$ and $L$ are two disjoint sets (A set of points and a set of lines, respectively), and $I$ is a symmetric binary relation on $P\times L$ called \emph{incidence relation}. Let $B=B(P,L,I)$ be a bipartite graph such that $V(B)=P\cup L$ where $P(L)$ is its left (right) part, and $E(B)$ is defined as $E(B)=\{(p,l)\in I, p\in P,l\in L\}$. By this definition, $B$ is a simple bipartite graph, and we call $B$ the \emph{incidence graph} for the incidence structure $(P,L,I)$.

Let $q$ be a prime power and $k\ge 3$, we define the incidence structure $M(k,q)=(P,L,I)$ as follows:
\begin{align}\label{definitionp}
P&=\{(p_1,p_{1,1},p_{1,2},p_{2,1},p_{2,2},p'_{2,2},p_{2,3},p_{3,2},p_{3,3},p'_{3,3},\dots,p'_{i,i},p_{i,i+1},p_{i+1,i},p_{i+1,i+1},\dots)\in \mathbb{Z}_q^k\}\\\label{definitionl}
L&=\{(l_1,l_{1,1},l_{1,2},l_{2,1},l_{2,2},l'_{2,2},l_{2,3},l_{3,2},l_{3,3},l'_{3,3},\dots,l'_{i,i},l_{i,i+1},l_{i+1,i},l_{i+1,i+1},\dots)\in \mathbb{Z}_q^k\}
\end{align}
\Cref{definitionp,definitionl} mean truncated vectors up to the $k$-th dimension. We assume $p_{-1,0}=l_{0,-1}=p_{1,0}=l_{0,1}=0,p_{0,0}=l_{0,0}=-1,p'_{0,0}=l'_{0,0}=1,p_{0,1}=p_1,l_{1,0}=l_1,l'_{1,1}=l_{1,1},p'_{1,1}=p_{1,1}$, and define incidence relation $L$ as follows: For any $p\in P$ and $l\in L$, $(p,l)\in I$ if and only if the following four equalities hold for each $i=1,2,\dots$. Here, if for an equality, any of its variables is not defined in the $k$-truncated $P,L$ defined above, we just ignore it.
\begin{align}\label{conditions0}
&l_{i,i}-p_{i,i}=l_1p_{i-1,i}\\ \label{conditions1}
&l'_{i,i}-p'_{i,i}=p_1l_{i,i-1}\\ \label{conditions2}
&l_{i,i+1}-p_{i,i+1}=p_1l_{i,i}\\ \label{conditions3}
&l_{i+1,i}-p_{i+1,i}=l_1p'_{i,i}
\end{align}
\begin{remark}
Here we briefly explain why $(P,L,I)$ can be seen as incidence relation between some points and lines. Here, our points are in fact points in $k$-dimensional linear space $\mathbb{F}_q^k$.  for any $p=(p_0,\dots,p_{k-1})$, we can interpret it as the coordinates of a point within the $k$-dimensional space. Then, given any $l=(l_0,\dots,l_{k-1})\in L$ and $p_0$, we think $l_0,\dots,l_{k-1}$ as constants. By observing \Cref{conditions0,conditions1,conditions2,conditions3}, we can do induction on $i\in[k-1]$ to show that for any $p=(p_0,\dots,p_{k-1})$, if  $l$ and $p$ satisfy all these equations, $p_i,i>0$ must be in the form $p_i=a_ip_0+b_i$, where $a_i,b_i$ only depend on $i$ and $l$. Therefore, by fixing $l$, we actually fix these $a_i,b_i$ coefficients, and all `points' $p$ incident to $l$ are exactly those $k$-dimensional points in a specific one-dimensional affine subspace defined by $l$. This means, $l$ is a `line' that defines a one-dimensional subspace. 
\end{remark}
\begin{definition}[\cite{lazebnik1}]
For any prime power $q$ and $k>3$, we define graph $D(k,q)$ to be the bipartite graph $B(M(k,q))$ defined above.
\end{definition}
Fix any $q,k$ satisfying conditions above and incidence structure $M(k,q)=(P,L,I)$. We consider $D(k,q)=(L\cup R,E)$ as bipartite graph, we know $|L|=|R|=|P|=|L|=q^k$. Furthermore, by observing \Cref{conditions0,conditions1,conditions2,conditions3}, we can find that for any $p\in P$ and $t\in \mathbb{Z}_q$ there is exactly one $l\in L$ such that $(p,l)\in I$ and $l_1=t$. That's because when we want to find solutions $l\in L$ for the above equalities, after fixing $p$ and any $l_1$, all other indices of $l$ can be iteratively calculated. There must be exactly one such solution $l$.

Therefore, for any $p\in P$ there are exactly $q$ solutions $l^{(0)},\dots,l^{(q-1)}\in L$ such that $(p,l^{(i)})\in I$ for all $0\leq i<q$, where $l^{(i)}_1=i$ so these solutions are pairwise distinct. This implies $D(k,q)$ is $q$-left regular bipartite graph. A similar analysis also holds for $L$ and the right part. It follows that $D(k,q)$ is actually a $(q,q)$-biregular bipartite graph.

This construction is very useful for its large girth:
\begin{theorem}[\cite{lazebnik1}]\label{girthlaze}
For any prime power $q$ and $k>3$, $D(k,q)$ has girth at least $k+4$.
\end{theorem}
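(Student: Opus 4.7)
The plan is to prove no cycle of length at most $k+3$ exists in $D(k,q)$. Since the graph is bipartite, every cycle has even length, so it suffices to rule out cycles of length $2s$ for each $s \leq \lfloor (k+2)/2 \rfloor$. The approach is to exploit the triangular structure of the defining equations (\ref{conditions0})--(\ref{conditions3}) to show that the coordinates of vertices along a short closed walk are overdetermined.

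First I would set up edge labelings. As already noted after the definition, for any $p \in P$ and any $a \in \mathbb{Z}_q$ there is a unique line $l$ with $(p,l)\in I$ and $l_1 = a$, and symmetrically for $L$: each edge incident to $p$ is naturally labeled by the $l_1$-coordinate of the line, and each edge incident to $l$ by the $p_1$-coordinate of the point. A non-backtracking walk of length $2s$ starting at $p_0$ is then encoded by a sequence of labels $(a_1,b_1,\ldots,a_s,b_s)\in \mathbb{Z}_q^{2s}$, and the non-backtracking condition becomes a forbidden-equality constraint between consecutive labels. This reduces cycle detection to asking when such a label sequence can close back to $p_0$.

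Next I would iterate equations (\ref{conditions0})--(\ref{conditions3}) along the walk. Starting from $p_0$ and label $a_1$, every coordinate of the first line $l^{(1)}$ is a polynomial in $a_1$ and the coordinates of $p_0$; every coordinate of the next point $p^{(1)}$ is then a polynomial in $b_1$ and those of $l^{(1)}$; and so on. Because each defining equation involves only indices at most $i+1$ and is linear in the new coordinate, the $j$-th coordinate at step $i$ is a polynomial in the label variables whose "depth" is controlled by $j$. Closing a putative cycle translates into a system of $k$ equalities, stating that all coordinates of the vertex reached at step $2s$ agree with those of the starting vertex.

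The main obstacle, and the technical heart of the argument, will be to show that for $2s \leq k+2$ these equalities force the label sequence to be a palindrome, i.e.\ the walk retraces itself, contradicting non-backtracking. I expect this to work by induction on the coordinate index $j$: assuming the earlier coordinates have already pinned the labels into palindromic pairs, the $j$-th coordinate equation locks in one more pair. The threshold $2s \leq k+2$ corresponds exactly to having enough coordinates available to run this induction to completion. The delicate part will be handling the boundary equations indexed at $(i,i)$ together with the primed variables $p'_{i,i}$ and $l'_{i,i}$, which introduce a slight asymmetry into the recursion and need to be matched across the palindrome carefully.
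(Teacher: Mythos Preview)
The paper does not give its own proof of this theorem; it is quoted as a black box from \cite{lazebnik1}, so there is no in-paper argument to compare against.

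As for your proposal on its own merits: the overall strategy is the right one and matches the spirit of the original Lazebnik--Ustimenko proof, which also works by tracking coordinates along a putative short closed walk and showing that the incidence relations over-constrain the labels. But what you have written is a plan, not a proof. The sentence ``assuming the earlier coordinates have already pinned the labels into palindromic pairs, the $j$-th coordinate equation locks in one more pair'' is exactly the technical content of the theorem, and you have not verified it. The four relation types \eqref{conditions0}--\eqref{conditions3} interleave the primed and unprimed diagonal variables in a way that makes the bookkeeping (which coordinate index controls which cycle length, and why the cutoff is exactly $k+4$) genuinely delicate; one cannot simply assert that ``enough coordinates are available.'' If you want a self-contained argument you must actually carry out that induction step by step, or else reproduce the invariant-based computation from \cite{lazebnik1}. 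As it stands, the proposal identifies the correct mechanism but leaves the real work undone.
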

We can further reduce the number of vertices while preserving the girth unchanged by identifying a single connected component of $D(k,q)$. We have the following result for the structure of connected components of $D(k,q)$.

To characterize connected components of $D(k,q)$, we need the following definition for \emph{certificate}. For any 
prime power $q$, odd $k\ge 6$ and corresponding incidence structure $M(k,q)=(P,L,I)$, given any $u=(u_1,u_{1,1},u_{1,2},u_{2,1},u_{2,2},u'_{2,2},u_{2,3},u_{3,2},u_{3,3},u'_{3,3},\dots,u'_{i,i},u_{i,i+1},u_{i+1,i},u_{i+1,i+1},\dots)\in P\cup L$ and any $r\leq \lfloor\frac{k+2}{4}\rfloor$, we define $r$-\emph{certificate} $a_r(u)$ of $u$ as follows:
\begin{align}\label{defcerti}
\forall 2\leq t\leq r, b_t(u)&=\sum_{i=0}^t(u_{i,i}u'_{r-i,r-i}-u_{i,i+1}u_{r-i,r-i-1})\\
a_r(u)&=(b_2(u),\dots,b_r(u))
\end{align}
Here we use $p_{0,-1}=l_{0,-1}=p_{1,0}=l_{0,1}=0,p_{0,0}=l_{0,0}=-1,p'_{0,0}=l'_{0,0}=1,p_{0,1}=p_1.l_{1,0}=l_1,l'_{1,1}=l_{1,1},p'_{1,1}=p_{1,1}$.

It's easy to show that given the above constraints on odd $k\ge 6$ and $r\leq \lfloor\frac{k+2}{4}\rfloor$, all related variables in \Cref{defcerti} are defined and thus the $r$-certificate $a_r(u)$ is well defined for any $u\in P\cup L$. Then, we can give a characterization of connected components in $D(k,q)$.
\begin{theorem}[\cite{lazebnik2}]\label{component}
Given prime power $q$ and odd $k\ge 6$, let $r=\lfloor\frac{k+2}{4}\rfloor$. Consider $M(k,q)=(P,L,I)$ and  $D(k,q)=(P\cup L,I)$, then for any two vertices $u,v\in (P\cup L)$ in the same connected component of $D(k,q)$, we have $a_r(u)=a_r(v)$.
\end{theorem}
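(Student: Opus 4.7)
The plan is to show that the $r$-certificate is invariant across each edge of $D(k,q)$; once this is established, the theorem follows immediately, since any two vertices in the same connected component are joined by a path of edges and invariance propagates along that path. Thus the whole task reduces to proving that for every incident pair $(p,l)\in I$ and every $2\le t\le r$, one has $b_t(p)=b_t(l)$.

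To attack this, I would fix such an edge $(p,l)$ and use the four incidence identities \Cref{conditions0,conditions1,conditions2,conditions3} as substitution rules, rewriting each coordinate of $l$ as the corresponding coordinate of $p$ plus a correction involving the pivots $p_1$, $l_1$ and a neighboring coordinate (and symmetrically for $p'_{i,i}$ in terms of $l_{i,i-1}$). Plugging these substitutions into the expression for $b_t(l)$ from \Cref{defcerti} and expanding yields $b_t(p)$ together with three families of cross terms: one linear in $l_1$ (coming from the shifts $l_{i,i}\mapsto p_{i,i}+l_1 p_{i-1,i}$ and $l_{i+1,i}\mapsto p_{i+1,i}+l_1 p'_{i,i}$), one linear in $p_1$ (coming symmetrically from the other two incidence identities), and one bilinear in $p_1 l_1$. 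The goal then becomes showing that each of these three sub-sums vanishes separately.

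The main obstacle will be the index bookkeeping: the vanishing of each sub-sum should come from a telescoping rearrangement, where the substitution rule at index $i$ pairs against the substitution rule at index $i+1$ (or at $t-i-1$) to produce matched products with opposite signs. Handling the boundary indices $i=0$ and $i=t$ together with the conventions $p_{-1,0}=l_{0,-1}=p_{1,0}=l_{0,1}=0$, $p_{0,0}=l_{0,0}=-1$, $p'_{0,0}=l'_{0,0}=1$, and the identifications $p_{0,1}=p_1$, $l_{1,0}=l_1$, $l'_{1,1}=l_{1,1}$, $p'_{1,1}=p_{1,1}$ is what makes the telescoping close cleanly; in particular, the $\pm 1$ boundary values absorb the leftover endpoint terms so that no net contribution remains.

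Finally, I would verify that the hypothesis $r\le\lfloor (k+2)/4\rfloor$ is exactly the regime in which every coordinate appearing in $b_t(u)$ for $t\le r$ lies among the components of the $k$-truncated vector, so that $a_r$ is well defined and the substitution step is valid throughout the computation; the assumption that $k$ is odd and $k\ge 6$ ensures that this range is nonempty and that the symmetric indices $i$ and $t-i$ never coincide in a degenerate way. With edge-invariance in hand, connectivity of each component finishes the argument.
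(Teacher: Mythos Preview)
The paper does not give its own proof of this statement; it is quoted directly from \cite{lazebnik2} without argument. Your plan --- reduce to edge-invariance of $b_t$ and then verify $b_t(p)=b_t(l)$ by substituting the four incidence relations and exhibiting a telescoping cancellation, with the boundary conventions absorbing the endpoint terms --- is exactly the method used in the cited source, so the proposal is correct and matches the original proof.

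One small caution on the bookkeeping: the displayed definition \eqref{defcerti} in this paper writes the second factor with index $r-i$, but for the telescoping to close (and for $b_t$ to genuinely depend on $t$) the index should be $t-i$, as in \cite{lazebnik2} and as you implicitly assume when you speak of pairing index $i$ against $t-i-1$. With that reading your outline goes through; with the literal $r-i$ it would not.
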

From above, we can use a single connected component of $D(k,q)$ instead of the whole while preserving the girth unchanged.
\begin{definition}\label{defcd}
Given prime power $q$ and odd $k\ge 6$, let $r=\lfloor\frac{k+2}{4}\rfloor$. Consider $D(k,q)=(P\cup L,I)$ defined above, let $S=\{u\colon a_r(u)=0^{r-1},u\in P\cup L\}$ denote the set of vertices with all-zero $r$-certificate vector. We define the graph $CD(k,q)=D(k,q)[S]$ as the induced subgraph of $D(k,q)$ on $S$.
\end{definition}
From the definition above it's obvious that $0^k\in S$ so $CD(k,q)$ is non-empty. Moreover, by \Cref{component} we know $CD(k,q)$ consists some complete connected component in $D(k,q)$, which means $CD(k,q)$ is a $(q,q)$-biregular graph.

The remaining problem is that this graph now is still balanced. To make it unbalanced with arbitrary ratio, we need to consider its specific induced subgraph as follows:
\begin{definition}
Given prime power $q$ and odd $k\ge 6$, consider $CD(k,q)=(P[S]\cup L[S],I[S])$ defined above. Let $A,B\subseteq \mathbb{Z}^*_q$ be two non-empty subsets, and $U=\{p\colon p_1\in A,p\in P[S]\}, V=\{l\colon l_1\in B,l\in L[S]\}$ be the sets of left/right vertices whose first indices are within $A$ and $B$ respectively, we define $CD(k,q,A,B)=CD(k,q)[U\cup V]$ as the induced subgraph of $CD(k,q)$ over left/right vertices with specified first index sets $A,B$.
\end{definition}
From the previous observation, we know for each $p\in U$ and $b\in B$, there is exactly one vertex $l\in V$ such that $(p,l)\in E(CD(k,q))$. Therefore, $CD(k,q,A,B)=(U\cup V,E)$ is a $(|B|,|A|)$-biregular graph with girth at least $k+4$. Thus, we can get arbitrary unbalance factor $\beta=\frac{|A|}{|B|}$ by choosing $A,B$ with appropriate sizes. Next, to get a strongly explicit construction guarantee, we need to characterize vertices in $CD(k,q,A,B)$. We will use the same notations as in \Cref{defcd} for brevity.

For any $u=(u_1,u_{1,1},u_{1,2},u_{2,1},u_{2,2},u'_{2,2},u_{2,3},u_{3,2},u_{3,3},u'_{3,3},\dots,u'_{i,i},u_{i,i+1},u_{i+1,i},u_{i+1,i+1},\dots)\in P\cup L$, we can define four corresponding polynomials $A_u,B_u,C_u,D_u\in \mathbb{Z}_q[x]/(x^{r+1})$ modulo $x^{r+1}$ as follows:
\begin{align}
A_u(x)=\sum_{i=0}^ru_{i,i}x^i,B_u(x)=\sum_{i=0}^ru'_{i,i}x^i,C_u(x)=\sum_{i=0}^ru_{i,i+1}x^i,D_u(x)=\sum_{i=0}^ru_{i,i-1}x^i

\end{align}
For simplicity, we use $a_i,b_i,c_i,d_i$ to denote the $i$-th coefficient of $A_u(x),B_u(x),C_u(x),D_u(x)$. From the above definitions, we can derive the conditions when $u\in U\cup V$:

1. If $u\in P$, then $u\in U$ iff:
\begin{align}\label{cddef1}
d_0=d_1=0;a_0=-1,b_0=1,c_0\in A,a_1=b_1\\\label{cddef2}
A_u(x)B_u(x)-C_u(x)D_u(x)\equiv t_1+t_2x\mod{x^{r+1}},t_1,t_2\in\mathbb{Z}_q
\end{align}

2. If $u\in L$, then $u\in V$ iff:
\begin{align*}
d_0=c_0=0;a_0=-1,b_0=1,d_1\in B,a_1=b_1\\
A_u(x)B_u(x)-C_u(x)D_u(x)\equiv t_1+t_2x\mod{x^{r+1}},t_1,t_2\in\mathbb{Z}_q
\end{align*}
\begin{theorem}\label{stronglyconstruction}
Let $CD(k,q,A,B)=(U\cup V,E)$ be the $(|B|,|A|)$-biregular graph defined above. We can construct it strongly explicitly, and $|U|=|A|q^{k+1-r},|V|=|B|q^{k+1-r}$.  
\end{theorem}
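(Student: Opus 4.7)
The plan is to prove the vertex counts and the strongly explicit construction together by exhibiting an explicit parametrization of $U$ (and symmetrically $V$) via its ``free'' coordinates, and then using the adjacency relations to compute neighbors coordinate-wise in $\polylog(n)$ time. First, I would reduce the defining conditions for $u\in U$ to a single polynomial identity. Among the initial constraints in \Cref{cddef1}, the requirements $a_0=-1$, $b_0=1$, $d_0=d_1=0$, and $a_1=b_1$ all hold automatically from the paper's conventions ($p_{0,0}=-1$, $p'_{0,0}=1$, $p_{0,-1}=p_{1,0}=0$, $p'_{1,1}=p_{1,1}$); the only genuine initial restriction is $c_0=p_1\in A$. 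Direct expansion shows that $[x^0](A_pB_p-C_pD_p)=-1$ and $[x^1](A_pB_p-C_pD_p)=0$ hold identically, so \Cref{cddef2} reduces to exactly $r-1$ scalar equations $[x^i](A_pB_p-C_pD_p)=0$ for $i=2,\ldots,r$. Since $A_p(x)$ has invertible constant term $-1$, the identity rearranges to $B_p(x)\equiv A_p(x)^{-1}(C_p(x)D_p(x)-1)\pmod{x^{r+1}}$; reading off the coefficients $b_2,\ldots,b_r$ uniquely determines $p'_{2,2},\ldots,p'_{r,r}$ as explicit polynomial functions of the remaining coordinates. Counting the free coordinates---with $p_1$ restricted to $A$ and the rest free in $\mathbb{F}_q$---yields the stated values of $|U|$ and $|V|$.

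\medskip
\noindent For the strongly explicit algorithm, I would represent each vertex by its coordinate vector in $\mathbb{F}_q^k$ and supply the following neighbor routine: given $p\in U$ and an edge index $i\in[|B|]$, look up the $i$-th element $l_1$ of the ordered set $B$ in $\polylog(q)$ time, then iteratively compute the coordinate vector of the neighbor $l\in L$ using \Cref{conditions0,conditions1,conditions2,conditions3}. Each coordinate of $l$ is obtained from the corresponding coordinate of $p$ by adding a single $\mathbb{F}_q$-product of two previously known quantities, so all $k$ coordinates are produced in $O(k)$ field operations, i.e.\ $O(k\polylog q)=\polylog(n)$ time. The output automatically lies in $V$: it is adjacent to $p\in U$ in $D(k,q)$, hence in the same connected component, and therefore inherits the all-zero $r$-certificate from $p$ by \Cref{component}; combined with $l_1\in B$, this places $l\in V$. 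The mirror case (starting from $l\in V$ and indexing neighbors by $p_1\in A$) is handled by inverting the same linear recurrences, which is straightforward since each relation is linear in the unknown $p$-coordinate.

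\medskip
\noindent The main obstacle is to verify the polynomial-inverse step rigorously, both as a correctness witness for the parametrization and as an efficient primitive. For correctness, one must confirm that the $b_2,\ldots,b_r$ extracted from $A_p^{-1}(C_pD_p-1)$ are exactly the unique solutions of the $r-1$ coefficient equations; this reduces to observing that $[x^0]$ and $[x^1]$ of $A_pB_p-C_pD_p$ are forced to equal $-1$ and $0$ respectively, pinning down the constants $t_1=-1$, $t_2=0$ in \Cref{cddef2} and making the polynomial identity literally equivalent to $B_p=A_p^{-1}(C_pD_p-1)\pmod{x^{r+1}}$. For efficiency, computing $A_p^{-1}\pmod{x^{r+1}}$ takes $O(r^2)$ field operations by coefficient-chasing (or $O(r\log r)$ via Newton iteration), and since $r=O(k)=O(\log_q n)$ this is $\polylog(n)$. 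A secondary but noteworthy subtlety is that near the truncation index $k$ some of the relations in \Cref{conditions0,conditions1,conditions2,conditions3} reference coordinates beyond position $k$ and are, per the paper's stated convention, simply ignored; this truncation does not disrupt the iterative coordinate-by-coordinate computation, so the $\polylog(n)$-time neighbor routine is well defined throughout.
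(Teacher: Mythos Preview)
Your proposal is correct and follows the same overall strategy as the paper's proof. The only variation is a symmetric choice of which polynomial to invert: the paper inverts $C_u$ (invertible since $c_0\in A\subseteq\mathbb{Z}_q^*$) to solve uniquely for $D_u$, whereas you invert $A_p$ (invertible since $a_0=-1$) to solve for $B_p$; the paper also packages the resulting parametrization as an explicit lexicographic bijection $f_U\colon[|U|]\to U$ (with computable inverse) so that the neighbor routine returns an integer index in $[|V|]$ rather than a raw coordinate vector, but your free-coordinate parametrization already supplies exactly this.
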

\begin{proof}
We first prove $|U|=|A|q^{k+1-r}$, and then $|V|=|B|q^{k+1-r}$ follows from a similar argument:

It's not hard to count that there are exactly $|A|q^{3r-1}$ solutions for $A_u(x),B_u(x),C_u(x)$ that satisfying \Cref{cddef1}. Fix any of them, if there exists some $D_u(x)$ with $d_0=d_1=0$ and \Cref{cddef2} holds, then there must be $t_1=-1$ and $t_2=0$ in \Cref{cddef2}. Then, note that $c_0\in A$ and $A\subseteq \mathbb{Z}^*_q$, so $C_u(x)$ must be invertible in $\mathbb{Z}_q[x]/(x^{r+1})$ and thus has an inverse $C^{-1}_u(x)$. Therefore, we can directly solve \Cref{cddef2} and get the desired solution $D_u(x)$ as $D_u(x)\equiv (A_u(x)B_u(x)+1)C^{-1}_u(x)\mod{x^{r+1}}$. Note that $A_u(x)B_u(x)+1\equiv 0\mod{x^2}$, this solution must satisfy $d_0=d_1=0$. Therefore, for each of $|A|q^{3r-1}$ solutions $A_u(x),B_u(x),C_u(x)$ of \Cref{cddef1} there is exactly one $D_u(x)$ that satisfies \Cref{cddef1} and \Cref{cddef2}. Fix these four polynomials, there are $k-4r+2$ remaining free indices in $u\in \mathbb{Z}_q^k$ unrelated to the definition of these polynomials, which means there are $q^{k-4r+2}$ ways to assign them. These assignments map to distinct $u\in U$, and conversely each $u\in U$ defines such a unique assignment. Therefore, by bijection counting we get $|U|=|A|q^{3r-1}\times q^{k-4r+2}=|A|q^{k+1-r}$.

We can easily define an arbitrary lexicographical order on solutions $A_u(x),B_u(x),C_u(x)$ of \Cref{cddef1} and assignments to free variables. Let $n=|U|$, we can use the order to define a function $f_U\colon [n]\rightarrow \mathbb{Z}_q^k$ such that $f_U(i)$ gives the vector encoding $\widetilde{u}_i\in\mathbb{Z}_q^k$ of the vertex $u_i$ with the $i$-th smallest vector in $U$. By basic polynomial calculation described above, we can evaluate $f_U$ and $f^{-1}_U$ within $\poly(r)=\polylog(|U|)$ time. A similar argument also holds for the right-side and we define the corresponding functions $f_V,f^{-1}_V$.

Therefore, given $i\leq n$ and $j\leq |B|$,  in order to compute the $j$-th neighbor of $u_i\in U$, we first compute its vector encoding $\widetilde{u}_i=f_U(i)$, set $l_1=b_j$(Here $b_j$ is the $j$-th element of $B$), and use \Cref{conditions0,conditions1,conditions2,conditions3} to get $\widetilde{v}\in \mathbb{Z}_q^k$, which is the vector encoding of its $j$-th neighbor. Finally, compute $f_V^{-1}(\widetilde{v})$ and we can get the index of $j$-th neighbor of $u_i$ in $V$. The total time complexity is $\polylog(|U|+|V|)$
\end{proof}
Now we are ready to construct two-sided lossless expanders with arbitrary unbalanced ratios. 
\begin{theorem}\label{thmlaze}
Let $0\leq \beta_0<\beta_1\leq 1,\epsilon_1,\epsilon_2\in(0,1]$. There are infinitely many pairs of $(d_1,d_2),d_1\leq d_2$ and $\delta=\delta(d_1,d_2,\epsilon_1,\epsilon_2)$ where $\beta=\frac{d_1}{d_2}\in [\beta_0,\beta_1]$, such that there is a strongly explicit construction of an infinite family of $(d_1,d_2)$-biregular graphs $(G_n=(L_n\cup R_n,E_n))_n$ where:

1. For all $S\subseteq L_n$ with $|S|\leq \delta|L_n|^{1/3-\epsilon_2}$, $N(S)\ge (1-\epsilon_1)d_1|S|$.

2. For all $S\subseteq R_n$ with $|S|\leq \delta|R_n|^{1/3-\epsilon_2}$, $N(S)\ge (1-\epsilon_1)d_2|S|$.
\end{theorem}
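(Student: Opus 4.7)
\medskip
\noindent\textbf{Proof proposal.}
The plan is to instantiate $G_n := CD(k, q, A, B)$ from \Cref{stronglyconstruction} with carefully chosen parameters and then invoke the girth-to-expansion reduction \Cref{girthtoexpansion}. First, fix any rational $\beta \in (\beta_0, \beta_1)$, which serves as the target aspect ratio. For each sufficiently large prime $q$, set $d_2 := q - 1$ and $d_1 := \lfloor \beta q \rfloor$, so that $d_1 / d_2 \in [\beta_0, \beta_1]$ once $q$ is large enough. Pick arbitrary subsets $A, B \subseteq \mathbb{Z}_q^*$ with $|A| = d_2$ and $|B| = d_1$. Varying $q$ over large primes (infinitely many by Euclid) produces infinitely many distinct admissible pairs $(d_1, d_2)$. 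For each such fixed pair, the infinite family $(G_n)$ is obtained by letting $k$ range over odd integers $k \geq k_0(\epsilon_2)$ and setting $G_n := CD(k, q, A, B)$.

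By \Cref{stronglyconstruction}, $G_n$ is strongly explicit and $(d_1, d_2)$-biregular with $|L_n| = d_2 \, q^{k+1-r}$ and $|R_n| = d_1 \, q^{k+1-r}$, where $r = \lfloor (k+2)/4 \rfloor$. Since $CD(k,q,A,B)$ is an induced subgraph of $D(k,q)$, \Cref{girthlaze} gives $\mathrm{girth}(G_n) \geq k+4$. Now apply \Cref{girthtoexpansion} with $g := k+3$, $d := d_1$, and $d' := \epsilon_1 d_1 / 2$ (valid once $\epsilon_1 d_1 / 2 > 1$, which holds for large $q$); this yields a constant $\delta_0 = \delta_0(\epsilon_1, d_1) > 0$ such that every $S \subseteq L_n$ with $|S| \leq \delta_0 (k+3) (\epsilon_1 d_1 / 2)^{\lfloor (k+3)/4 \rfloor}$ satisfies $|N(S)| \geq (1 - \epsilon_1) d_1 |S|$. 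The identical argument with $L$ and $R$ swapped (using $d = d_2$) gives the right-side bound.

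It then remains to verify that for $k$ large,
\[
\delta_0 (k+3) \left(\tfrac{\epsilon_1 d_1}{2}\right)^{\lfloor (k+3)/4\rfloor} \;\geq\; \delta \, |L_n|^{1/3 - \epsilon_2}.
\]
Since $k + 1 - r = 3k/4 + O(1)$, the right-hand side scales like $q^{(1-3\epsilon_2) k/4}$ in the dominant exponent (up to lower-order factors of $d_2$), while the left-hand side scales like $(\epsilon_1 d_1/2)^{k/4}$. With $d_1 = \lfloor \beta q \rfloor$ we have $\epsilon_1 d_1 / 2 \geq q^{1 - 3\epsilon_2}$ whenever $q \geq (2/(\epsilon_1 \beta))^{1/(3\epsilon_2)}$; for such $q$ and $k$ sufficiently large the exponential in $k$ on the left dominates, so the inequality holds with an appropriate $\delta = \delta(d_1, d_2, \epsilon_1, \epsilon_2) > 0$.

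The main obstacle is parameter bookkeeping. The girth of $D(k,q)$ is only $\Theta(k)$ while its vertex count is $\Theta(q^k)$, so naive girth-to-expansion yields at best $n^{1/4}$-size guarantees. The key ingredient that pushes the bound up to $n^{1/3 - \epsilon_2}$ is the restriction to the zero-certificate component $CD(k,q)$ of \Cref{defcd}, which shrinks the vertex count to $\Theta(q^{3k/4})$ without lowering the girth. Coupled with the need to take $d_1$ a constant fraction of $q$ so that $(\epsilon_1 d_1/2)^{k/4}$ catches up with $q^{(1-3\epsilon_2) k/4}$, this parameter balance is the only genuinely delicate point; the strongly-explicit guarantee, the biregularity, and the aspect ratio all follow directly from the construction.
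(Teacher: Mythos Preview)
Your proposal is correct and follows essentially the same approach as the paper: instantiate $CD(k,q,A,B)$ with $d_2=q-1$ and $d_1\approx\beta q$, invoke the girth lower bound from \Cref{girthlaze}, apply \Cref{girthtoexpansion}, and then balance parameters so that $(\epsilon_1 d_1/2)^{k/4}\gtrsim q^{(1-3\epsilon_2)k/4}$, which forces $q$ large in terms of $\epsilon_1,\epsilon_2,\beta$. The paper makes the same choices (with $d'=\epsilon_1(d_1-2)$ in place of your $\epsilon_1 d_1/2$) and carries out the same exponent comparison, so there is no substantive difference.
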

\begin{proof}
There are infinitely many prime powers $q$ such that we can choose $\beta_0(q-1)\leq d_1\leq \beta_1(q-1)$. Let $d_2=q-1$, these pairs $(d_1,d_2)$ satisfy the specified unbalance condition. 

Let $A=[1,d_1],B=[1,d_2]$, by \Cref{stronglyconstruction}, we have a strongly explicit construction of $CD(k,q,A,B)$ for all sufficiently large odd $k\ge 6$. Let $G_m=CD(2m+101,q,A,B)$ be the graph we consider. We can suppose $q,d_1,d_2$ are some sufficiently large enough constants. 

We first prove the case when $S\subseteq L_m$. Let $k=2m+101, r=\lfloor\frac{k+4}{4}\rfloor,n=|V(G_m)|\leq q^{k+3-r}$ by \Cref{stronglyconstruction}. Because 
$G_m$ has girth at least $k+4$ by \Cref{girthlaze}, from \Cref{girthtoexpansion} we know that when $|S|\leq \delta'(\epsilon_1(d_1-2))^{k/4}$, $N(S)\ge (1-\epsilon_1)d_1|S|$, where $\delta'=\delta'(\epsilon_1,d_1)$ (We can assume $\epsilon_1(d_1-2)>1$ by using large enough $d_1$). By $n\leq q^{k+3-r}\leq q^{3k/4+4}$, we can get $k\ge \frac{4}{3}(\log_q{n}-4)$. Therefore, we can rewrite the bound as follows:
\begin{align*}
&\delta'(\epsilon_1(d_1-2))^{k/4}\\
\ge& \delta'(n/q^4)^{1/3(\log_q(\epsilon_1(d_1-2)))}\\
\ge&\delta'(n/q^4)^{1/3(\log_q{\epsilon_1}+\log_q{0.1\beta_0}+1)}
\end{align*}
Let $a=\log_q{\epsilon_1}+\log_q{0.1\beta_0}$. 
By setting $q$ large enough we can make $a\ge-\epsilon_2$. Let $\delta=\delta'/q^{4/3(1+a)}$, the above bound is larger than $\delta n^{1/3-\epsilon_2}$, and we are done. The lossless expansion from right to left follows from a similar argument.
\end{proof}
\section{Lossless Expander with Additional Expansion from the Other Side}\label{losslesssec}
In this section, we aim to give a strongly explicit construction of lossless expanders that not only yields lossless expansion from left to right for linear-sized sets but also guarantees all $n^{1/3-\epsilon}$-sized sets from the right side have a constant fraction of neighbors to be unique neighbors.

We use the tripartite product introduced in \cite{hmmp24} as the framework (will be formally defined later). This product needs two large base graphs and a constant-size gadget graph. We first prepare the two base graphs we will need:

\begin{theorem}[\cite{lps88}]\label{lps88cons}
We have a strongly explicit construction algorithm $\mathcal{A}$ that: Let $p,q$ be two primes where $p\equiv q\equiv 1\mod{4}$ where $q$ is not a quadratic residue modulo $p$, $\mathcal{A}(p,q)$ construct a $(p+1,p+1)$-biregular graph graph $G=(L\cup R,E)$ where $|L|=|R|=q(q^2-1)/2$ such that
\begin{compactitem}
\item[(1)] $\lambda_2(G)\leq 2\sqrt{p}$

\item[(2)] girth of $G$ is at least $4\log_p{q}$.
\end{compactitem}
\end{theorem}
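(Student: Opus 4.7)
The plan is to realize $G$ as a bipartite Cayley graph on $PGL_2(\F_q)$ with a generating set coming from integer quaternions. First I would enumerate the integer solutions $\alpha = a_0 + a_1 i + a_2 j + a_3 k$ to $a_0^2+a_1^2+a_2^2+a_3^2 = p$ with $a_0>0$ odd and $a_1,a_2,a_3$ even; by Jacobi's four-square theorem there are exactly $p+1$ such quaternions, occurring in conjugate pairs $\{\alpha,\bar\alpha\}$. Using $p\equiv 1\pmod 4$ (so $-1$ is a square in $\F_q$), embed the Hamilton quaternion algebra into $M_2(\F_q)$ and map each $\alpha$ to its image $\rho(\alpha)\in GL_2(\F_q)$, which is invertible since $\det\rho(\alpha)=p\ne 0$ in $\F_q$. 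Passing to $PGL_2$ yields a symmetric set $S$ of size $p+1$, and the assumption that $q$ is not a quadratic residue modulo $p$ (together with a Legendre-symbol computation on $\det\rho(\alpha)=p$) forces every element of $S$ to lie in the nontrivial coset of the index-$2$ subgroup $PSL_2(\F_q)\subset PGL_2(\F_q)$. The Cayley graph is then naturally bipartite with parts $L=PSL_2(\F_q)$ and $R=PGL_2(\F_q)\setminus PSL_2(\F_q)$, each of size $q(q^2-1)/2$, and $(p+1)$-biregular as required.

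The main obstacle is the Ramanujan bound $\lambda_2(G)\le 2\sqrt p$, which I would not attempt to prove from scratch. The standard route identifies the adjacency operator of $G$ with a Hecke-type operator on functions on the quotient of the Bruhat--Tits tree of $PGL_2(\mathbb{Q}_p)$ by a congruence subgroup $\Gamma(q)$ of a definite quaternion order; Jacquet--Langlands transfers the non-trivial eigenvalues to Hecke eigenvalues of classical weight-$2$ cusp forms, for which Deligne's proof of the Ramanujan--Petersson conjecture yields $|a_p|\le 2\sqrt p$. I would cite this as a black box, since reproving it is far outside the scope of a short sketch.

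For the girth, a closed non-backtracking walk of length $\ell$ corresponds to a reduced word $\alpha_1\cdots\alpha_\ell$ in the chosen integer quaternions of norm $p$ whose image in $PGL_2(\F_q)$ is trivial, so the product equals $\lambda I$ modulo $q$ for some $\lambda\in\F_q^\times$. The product has integer quaternion norm $p^\ell$, so $\lambda^2\equiv p^\ell\pmod q$ and each of its four coordinates has absolute value at most $p^{\ell/2}$. If $\ell<4\log_p q$ then $p^{\ell/2}<q^2$, and a slightly sharper accounting forces the three imaginary coordinates, which are divisible by $q$, to be literally zero as integers, making the product a real scalar. A standard argument that no reduced word in our generators (no two consecutive factors being inverses) can collapse to a real scalar then yields the contradiction, establishing girth $\ge 4\log_p q$.

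Strong explicitness is immediate from the algebraic description: the $p+1$ generators of $S$ are precomputed once in $O(1)$ time (treating $p$ as constant relative to the graph size $n=q(q^2-1)$), each vertex is represented as a $2\times 2$ matrix over $\F_q$, and the $i$-th neighbor of any vertex is obtained by one matrix multiplication modulo $q$, which runs in $\polylog(n)$ arithmetic. Thus the genuinely hard ingredient is Deligne's bound used in the spectral step; the construction, the bipartite decomposition, the girth argument, and the algorithmic cost are all elementary quaternion and finite-group computations.
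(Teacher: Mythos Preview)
The paper does not prove this theorem at all: it is quoted directly from \cite{lps88} as a black box and used without argument. So there is no ``paper's own proof'' to compare against; your sketch is essentially a compressed outline of the original Lubotzky--Phillips--Sarnak construction and proof, which is exactly what a reader chasing the citation would find.

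Your outline is broadly faithful to LPS. Two small remarks. First, the parenthetical ``$p\equiv 1\pmod 4$ (so $-1$ is a square in $\F_q$)'' conflates the roles of $p$ and $q$: it is $q\equiv 1\pmod 4$ that makes $-1$ a square in $\F_q$ and allows the embedding of the Hamilton quaternions into $M_2(\F_q)$. Second, your girth sketch is the right shape but the accounting is loose: from $\ell<4\log_p q$ you only get $p^{\ell/2}<q^2$, whereas knowing the imaginary coordinates are divisible by $q$ forces them to vanish only once they are bounded by $q$, not $q^2$. The actual LPS argument in the bipartite case uses an additional congruence (coming from the determinant condition $\lambda^2\equiv p^\ell\pmod q$ together with quadratic reciprocity, which here gives that $p$ is a non-residue mod $q$) to gain the extra factor; your phrase ``a slightly sharper accounting'' is hiding precisely this step. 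None of this is fatal for a sketch that already defers the hard spectral input to Deligne, but if you ever need to write it out, that is where the work is.
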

We can use \Cref{lps88cons} to derive the first base Ramanujan graph we need with large girth.
\begin{theorem}\label{lps88cons2}
For any prime $p\equiv 1\mod 4$, we have a strongly explicit construction of an infinite family of $(p+1,p+1)$-biregular graphs $(G_n)_{n\ge 1}$ such that $\lambda_2(G_n)\leq 2\sqrt{p}$, and $G_n$ has girth at least $\frac{4}{3}\log_p{|V(G_n)|}$.
\end{theorem}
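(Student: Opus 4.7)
The plan is to apply \Cref{lps88cons} to infinitely many auxiliary primes $q$ drawn from a carefully chosen arithmetic progression, and then translate its girth bound $4\log_p q$ into a bound of $\tfrac{4}{3}\log_p|V(G_n)|$ via the crude size estimate $|V(G_n)|=q(q^2-1)<q^3$.

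First, I would fix a quadratic non-residue $a\in(\Z/p\Z)^*$, which exists because exactly half of the nonzero residues modulo the odd prime $p$ are non-residues. Using the Chinese Remainder Theorem (and the fact $\gcd(4,p)=1$), I would pick the unique $r\in\{1,\dots,4p-1\}$ with $r\equiv 1\pmod 4$ and $r\equiv a\pmod p$; this $r$ is automatically coprime to $4p$. Dirichlet's theorem (\Cref{dirchlet}) then produces infinitely many primes $q\equiv r\pmod{4p}$, and every such $q$ satisfies both hypotheses of \Cref{lps88cons}: $q\equiv 1\pmod 4$ and $q$ is a quadratic non-residue modulo $p$. Enumerating these primes as $q_1<q_2<\cdots$, I would define $G_n:=\mathcal{A}(p,q_n)$, where $\mathcal{A}$ is the strongly explicit construction promised by \Cref{lps88cons}.

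The girth guarantee then follows cheaply. By \Cref{lps88cons}, $|V(G_n)|=q_n(q_n^2-1)<q_n^3$, $\lambda_2(G_n)\le 2\sqrt{p}$, and $G_n$ has girth at least $4\log_p q_n$. Taking $\log_p$ of the size bound yields $\log_p|V(G_n)|<3\log_p q_n$, which rearranges to $\tfrac{4}{3}\log_p|V(G_n)|<4\log_p q_n\le \mathrm{girth}(G_n)$, as required.

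For strong explicitness I would invoke the Prime Number Theorem on arithmetic progressions (\Cref{pnt}): with $p$ fixed, $q_n=\Theta(n\log n)$, so the $n$-th prime in the progression $r+4p\,\Z$ can be located in $\polylog n$ time by enumerating candidates and running a standard primality test. Since $|V(G_n)|=\Theta(q_n^3)$, we have $\log q_n=\Theta(\log|V(G_n)|)$, and the strongly explicit edge oracle of \Cref{lps88cons} on $\mathcal{A}(p,q_n)$ then answers each query in $\polylog|V(G_n)|$ time. No step here is a genuine obstacle; the only subtlety is arranging the CRT residue class so that $q\equiv 1\pmod 4$ and $q$ being a non-residue modulo $p$ hold simultaneously for infinitely many primes $q$, which is exactly why we work modulo $4p$ and invoke Dirichlet rather than the bare Prime Number Theorem.
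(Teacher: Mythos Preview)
Your proof follows essentially the same route as the paper: choose a residue class modulo $4p$ via CRT that forces $q\equiv 1\pmod 4$ and $q$ a non-residue modulo $p$ (the paper happens to use a primitive root of $\Z_p^*$ where you use an arbitrary non-residue, which is equivalent), apply Dirichlet, and invoke \Cref{lps88cons}; you even spell out the girth arithmetic $|V(G_n)|<q_n^3\Rightarrow \tfrac{4}{3}\log_p|V(G_n)|<4\log_p q_n$ that the paper leaves implicit.

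One small slip worth flagging: locating the $n$-th prime in the progression cannot be done in $\polylog n$ time by ``enumerating candidates,'' since there are $\Theta(n)$ of them below $q_n$. This is only an indexing artifact, though---the family is naturally parameterized by $q$ itself rather than by its ordinal position, and given $q$ the edge oracle of \Cref{lps88cons} runs in $\polylog|V|$ time, which is what strong explicitness actually demands. The paper's proof simply does not discuss this point.
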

\begin{proof}
By \Cref{dirchlet} there are infinitely many odd primes $q\equiv p+(3p+1)g\mod{4p}$ where $g$ is the primitive root of $\mathbb{Z}_p^*$. By Chinese remainder theorem we know such $q$ satisfies $q\equiv 1\mod{4}$ and $q\equiv g\mod{p}$, and therefore $q$ is not a square residue modulo $p$. Using \cref{lps88cons}, we get the desired construction.
\end{proof}
Another base graph we use is from \cite{ow20}.
\begin{theorem}[A special case of main theorem in \cite{ow20}]\label{ow20cons}
For every $c,d\ge 3$ and $\gamma>0$, there is a constant $\delta>0$ and a strongly explicit construction of an infinite family of $(c,d)$-biregular graphs $(G_n)_{n\ge 1}$ such that $\lambda_2(G_n)\leq (\sqrt{c-1}+\sqrt{d-1})(1+\gamma)$, and $G_n$ is $(\epsilon\sqrt{\log{|V(G_n)|}})$-bicycle-free.
\end{theorem}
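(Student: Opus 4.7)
Since this is cited as a special case of a result in \cite{ow20}, my plan is to reconstruct its proof via the iterated $2$-lift paradigm. Start with a constant-size $(c,d)$-biregular Ramanujan ``seed'' graph $H_0$, obtained either by exhaustive search over biregular graphs of small bounded size or by taking a known small algebraic construction. Recall that a $2$-lift of a graph $H$ is specified by an edge signing $s\colon E(H)\to\{\pm 1\}$; the resulting cover $\tilde H$ is again $(c,d)$-biregular, has twice as many vertices, and its spectrum is the disjoint union of the ``old'' eigenvalues of the unsigned adjacency matrix $A_H$ (preserved by induction) and the ``new'' eigenvalues of the signed matrix $A_s$. Iterating $k=\Theta(\log n)$ lifts starting from $H_0$ gives a family whose sizes scale exponentially in $k$, covering the target size $|V(G_n)|=n$.

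To control the new spectrum at each step, I would invoke the interlacing-polynomials framework of Marcus--Spielman--Srivastava, together with its algorithmic derandomization. The two key inputs are (i) an upper bound on the largest root of the expected characteristic polynomial $\Ex_s[\det(xI-A_s)]$ under a uniform random signing, which one shows is at most $(\sqrt{c-1}+\sqrt{d-1})(1+\gamma)$ via a mixed-characteristic-polynomial computation adapted to the biregular bipartite setting, and (ii) an interlacing family, which allows one to pick signings bit by bit so that the spectral radius of $A_s$ never exceeds that of the expected polynomial. Pessimistic-estimator-style derandomization of this walk yields a polynomial-time deterministic signing at each level.

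For the $(\epsilon\sqrt{\log|V(G_n)|})$-bicycle-freeness, I would exploit that a random $2$-lift is locally tree-like with high probability: a short cycle in $H$ lifts to a cycle in $\tilde H$ only when the product of its signs is $+1$, and two short cycles inside a common radius-$r$ neighborhood lift jointly only under very restrictive sign patterns. An expected-count argument bounds the number of vertices whose radius-$r$ neighborhood contains two cycles by $|V(\tilde H)|\cdot(\max\text{degree})^{O(r)}\cdot 2^{-\Omega(r)}$, which is $o(1)$ for $r=\Theta(\sqrt{\log n})$. Since there are only polynomially many such ``bad local events,'' the method of conditional expectations over sign bits derandomizes this guarantee, and a strongly explicit edge oracle is obtained by storing the $\Theta(\log n)$ signings of the lift tower and unrolling any vertex's path through the tower in $\polylog(n)$ time.

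The main obstacle I expect is the joint derandomization: one must choose, at every lift step, a single signing that is simultaneously good for the interlacing argument and compatible with the bicycle-freeness constraints, while keeping the construction time polynomial and the edge query time polylogarithmic. The cleanest route is to first restrict attention to the affine-type subfamily of signings that pass the local bicycle-free checks (cut out by polynomially many $\F_2$-linear conditions) and then verify that the interlacing property and the Ramanujan root bound continue to hold on this restricted family, losing at most a $(1+\gamma)$ factor. Tracking these constants carefully so that the spectral bound holds exactly as stated, and arranging the lift tower to support polylog-time neighbor queries on the final graph, are the delicate parts of the plan.
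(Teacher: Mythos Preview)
The paper does not prove this theorem at all: it is stated as a black-box import from \cite{ow20} and used without argument. There is therefore no ``paper's own proof'' to compare your proposal against; you are attempting to reconstruct the external result, which is a different (and much harder) exercise than anything the present paper undertakes.

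On the substance of your reconstruction: the $2$-lift/interlacing-polynomials paradigm is indeed the right neighborhood, but your plan conflates two distinct regimes. The Marcus--Spielman--Srivastava interlacing argument yields \emph{exact} Ramanujan bounds $\sqrt{c-1}+\sqrt{d-1}$, not a $(1+\gamma)$ relaxation; the slack in \cite{ow20} is there precisely because that paper does \emph{not} rely on the full interlacing machinery but instead derandomizes a trace-method spectral bound for random lifts, which is what makes the construction strongly explicit and simultaneously delivers bicycle-freeness. Your ``joint derandomization'' paragraph identifies the real difficulty, but the resolution you sketch---restricting to an affine subfamily of signings and hoping interlacing survives---does not work: interlacing families are fragile under such restrictions, and in any case the conditional-expectation walk over MSS signings takes polynomial time per level, which destroys strong explicitness (your ``store the $\Theta(\log n)$ signings'' step presupposes those signings have polylog-size descriptions, which the MSS derandomization does not provide). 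The actual \cite{ow20} argument sidesteps all of this by working with higher-degree lifts over structured groups and a weaker spectral certificate, trading the exact Ramanujan bound for the $(1+\gamma)$ factor in exchange for genuine strong explicitness and the local tree-like guarantee.
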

Then, in order to construct the desired gadget graph by exhaustive search, we will need the existence of 
$(d_1,d_2)$-biregular graphs with optimal two-sided expansion in the settings $d_1=\Theta(n)$.
\begin{theorem}[\Cref{linearbiregular2} Restated]\label{linearbiregular}
For any $\delta,C>0$, there exists a constant $\epsilon>0$ such that: Let $n_1=n\leq n_2\leq \poly(n),\omega(\log{n_1})\leq d_1\leq \epsilon n_2,\omega(\log{n_2})\leq d_2\leq \epsilon n_1$ and $p=\frac{d_1}{n_2}=\frac{d_2}{n_1}$. For all sufficiently large $n$, there exists $(d_1,d_2)$-biregular graph with $n_1$ and $n_2$ left/right vertices such that:
\begin{compactitem}
\item[(1)] For all $t\leq \frac{Cn_1}{d_2}$,  $\forall S\subseteq R(H)$, s.t. $|S|=t$, $\frac{|\mathsf{UN}_H(S)|}{|S|}\ge (1-\delta)d_2\exp(-pt)$

\item[(2)] For all $t\leq \frac{Cn_2}{d_1}$,  $\forall S\subseteq L(H)$, s.t. $|S|=t$, $\frac{|\mathsf{UN}_H(S)|}{|S|}\ge (1-\delta)d_1\exp(-pt)$
\end{compactitem}
\end{theorem}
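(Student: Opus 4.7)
The plan is to use the probabilistic method with a uniformly random $(d_1,d_2)$-biregular bipartite graph sampled via the configuration model: attach $d_1$ half-edges to each of the $n_1$ left vertices and $d_2$ half-edges to each of the $n_2$ right vertices, then take a uniformly random perfect matching on the $n_1 d_1 = n_2 d_2$ half-edges. Because $d_1\le \epsilon n_2$ and $d_2\le \epsilon n_1$ with $\epsilon$ chosen small, a standard second-moment computation shows the resulting multigraph is simple with probability bounded below by a positive constant, so it will suffice to prove the two expansion properties in the unconditional model.

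For a fixed right-set $S\subseteq R$ with $|S|=t$, the first step would be to compute the expected unique-neighbor count. A left vertex $u$ is a unique neighbor of $S$ iff exactly one of its $d_1$ half-edges is matched into $S$. Since $S$ holds a $t/n_2$ fraction of the right half-edges, this probability equals $(1+o(1))\cdot d_1(t/n_2)(1-t/n_2)^{d_1-1}$; using the identity $n_1 p=d_2$ and the approximation $(1-t/n_2)^{d_1-1}=(1+o(1))e^{-pt}$ (valid since $t/n_2\le C/d_1\to 0$), the expectation satisfies $\mathbb{E}[|\mathsf{UN}(S)|]=(1+o(1))\,d_2\,t\,e^{-pt}$, matching the target. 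Crucially, $pt\le C$ throughout the allowed range $t\le Cn_1/d_2$ (via $n_1 d_1=n_2 d_2$), so $e^{-pt}\ge e^{-C}$ is a positive constant.

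Next I would establish a Chernoff-type tail bound of the form
\[
\Pr\!\Bigl[|\mathsf{UN}(S)|<(1-\delta)\,d_2\,t\,e^{-pt}\Bigr]\le \exp\!\bigl(-\Omega(\delta^2 d_2 t e^{-pt})\bigr),
\]
either by directly enumerating over candidate unique-neighbor sets $T$ of size below the threshold and bounding the number of compatible matchings via Stirling's formula (the route taken in \cite{randombook,hmmp24}), or by switching to the sequential half-edge exposure martingale and applying an Azuma/McDiarmid-type inequality. Then I would union-bound over all $\binom{n_2}{t}\le (en_2/t)^t$ choices of $S$ and all $t\le Cn_1/d_2$: the total failure probability is at most $\sum_t \exp\!\bigl(t\log(en_2/t)-\Omega(\delta^2 e^{-C} d_2 t)\bigr)$, which is $o(1)$ because $d_2=\omega(\log n_2)\gg \log(en_2/t)$. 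The argument for left-sets $S\subseteq L$ of size $t\le Cn_2/d_1$ is entirely symmetric, using $d_1=\omega(\log n_1)$ and the identity $pt\le C$ on that range. Since both failure events have vanishing probability, a positive fraction of biregular graphs satisfies both expansion conditions simultaneously.

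The main obstacle I expect is making the constant in the Chernoff bound explicit enough that the union bound succeeds \emph{uniformly} across the whole range $t\in[1,Cn_1/d_2]$: in the small-$t$ regime the expectation is $\Theta(d_2 t)$ which is easy, but as $t$ approaches $Cn_1/d_2$ the coefficient degrades to $\Theta(d_2 e^{-C})$, and I must verify the $\omega(\log n)$ degree hypothesis still beats the entropy $t\log(en_2/t)$. The second delicate point is choosing $\epsilon=\epsilon(\delta,C)$ small enough to absorb both the $o(1)$ approximation error in $(1-t/n_2)^{d_1-1}\approx e^{-pt}$ and the multi-edge correction needed to transfer the bound from the configuration model to the uniform simple-graph distribution; this is where the upper bounds $d_1\le \epsilon n_2$ and $d_2\le \epsilon n_1$ actually get used, and is also the place where the earlier $d=\Theta(\sqrt{n})$ argument of \cite{hmmp24} needs to be reworked to reach $d=\Omega(n)$.
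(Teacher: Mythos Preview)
Your plan has a genuine gap at the very first step: the claim that the configuration model produces a simple graph with probability bounded away from zero is false in this degree regime. In the bipartite configuration model with all left degrees $d_1$ and all right degrees $d_2$, the expected number of parallel-edge pairs is $\Theta(d_1 d_2)$, so the probability of simplicity is $\exp(-\Theta(d_1 d_2))$. Since here $d_1,d_2=\omega(\log n)$, and in fact can be as large as $\epsilon n$, this probability is $o(1)$ (indeed it can be as small as $\exp(-\Theta(n^2))$), so you cannot transfer a ``holds with probability $1-o(1)$'' statement from the unconditioned configuration model to the uniform simple biregular distribution. This is precisely the obstacle that distinguishes the linear-degree regime from the constant-degree regime, and it is the reason the earlier $d=\Theta(\sqrt{n})$ argument does not extend automatically.

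The paper sidesteps this entirely by using a different coupling: the embedding theorem of \cite{hmmp24} (\Cref{embedding}) produces a joint distribution $(G,H)$ where $G\sim\mathbb{G}_{n_1,n_2,p'}$ is Erd\H{o}s--R\'enyi with $p'=(1-O(\epsilon^{1/3}))p$, $H$ is a uniformly random $(d_1,d_2)$-biregular simple graph, and $E(G)\subseteq E(H)$ with probability $1-o(1)$. One then proves the unique-neighbor bound for $G$ (easy: independent edges, straight Chernoff, \Cref{randomsample}), and transfers it to $H$ by observing that each vertex gains at most $d_i-\deg_G(v)\le(\delta/C_2)d_i$ extra edges when passing from $G$ to $H$; hence $|\mathsf{UN}_H(S)|\ge|\mathsf{UN}_G(S)|-(\delta/C_2)d_i|S|$. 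The role of the hypothesis $d_i\le\epsilon n_j$ is to make the embedding loss $p-p'=O(\epsilon^{1/3})p$ small enough to be absorbed into the final $(1-\delta)$ factor, which is exactly the ``second delicate point'' you anticipated---but the mechanism is this Erd\H{o}s--R\'enyi embedding, not a simplicity argument for the configuration model.
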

The proof of \Cref{linearbiregular} is postponed to \Cref{biregularsec}.

The last tool we will use is the following theorem that uses the second largest eigenvalue to bound induced subgraph density in $(c,d)$-biregular graph. This is generalized from the similar result in \cite{hmmp24}. We fix a flaw in the proof of \cite{hmmp24}, and also generalize the parameters a bit from $\min\{c,d\}\ge 3$ to $\min\{c,d\}\ge 2$, which will be useful in \Cref{edgevertexsec}.
\begin{theorem}\label{subgraphexpansion}
Let $2\leq c\leq d$ be integers that $cd>6$ and $0<\epsilon<0.01$. For any $(c,d)$-biregular graph $G=(L\cup R,E)$ and its vertex set $S\subseteq L\cup R$ that $|S|\leq d^{-1/\epsilon}|L\cup R|$, let $L_S=S\cap L,R_S=S\cap R, m=|E(G[S])|$ and $d_L=m/|L_S|,d_R=m/|L_R|$ denote the left/right average degree in $G[S]$, we have:
\begin{equation*}
(d_L-1)(d_R-1)\leq \lambda^2-(\sqrt{c-1}-\sqrt{d-1})^2
\end{equation*}
where
$
\lambda=\max(\lambda_2(G),\sqrt{c-1}+\sqrt{d-1})(1+5\epsilon)$
\end{theorem}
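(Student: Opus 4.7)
The target inequality $(d_L-1)(d_R-1)\leq \lambda^2-(\sqrt{c-1}-\sqrt{d-1})^2$ has the characteristic shape of a non-backtracking walk bound for $(c,d)$-biregular graphs: the left-hand side is essentially the branching factor of length-$2$ non-backtracking walks staying inside $G[S]$, while the right-hand side is what the adjacency spectral bound $\lambda_2(G)\leq \lambda$ translates into after conversion to the non-backtracking matrix via the bipartite Ihara-Bass identity. Accordingly, the plan is a spectral / quadratic-form argument that upper-bounds non-backtracking density inside $G[S]$ against the non-backtracking spectral radius of $G$, rather than the linear expander mixing lemma (which would only give $\sqrt{d_L d_R}$-style control and cannot by itself produce the $-1$ shifts).

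Concretely, I would work with the bipartite adjacency matrix $A\in\mathbb{R}^{L\times R}$ of $G$, whose top singular value is $\sqrt{cd}$ with trivial singular vectors $\mathbf{1}_L/\sqrt{|L|}$ and $\mathbf{1}_R/\sqrt{|R|}$, and whose remaining singular values are bounded by $\lambda_2(G)\leq \lambda/(1+5\epsilon)$. Let $x=\mathbf{1}_{L_S}$, $y=\mathbf{1}_{R_S}$, and decompose $x=\alpha_0\mathbf{1}_L+x^\perp$, $y=\beta_0\mathbf{1}_R+y^\perp$ with $\alpha_0^2,\beta_0^2\leq |S|/|L\cup R|$. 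To extract the quadratic non-backtracking shape, iterate by considering the two quadratic forms $\langle A^TAx,x\rangle$ and $\langle AA^Ty,y\rangle$, which count closed length-$2$ walks rooted in $L_S$ and $R_S$ respectively. Subtracting the degenerate backtracking diagonal contributions $c|L_S|$ and $d|R_S|$, the residual non-backtracking-walk counts are at least $(d_R-1)m$ and $(d_L-1)m$ respectively; the spectral upper side contributes a trivial term $\alpha_0^2\cdot cd\cdot|L|$ (resp.\ $\beta_0^2\cdot cd\cdot|R|$) plus $\lambda_2^2\|x^\perp\|^2$ (resp.\ $\lambda_2^2\|y^\perp\|^2$). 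The trivial-eigenspace mass, which scales like $|S|^2\cdot cd/(|L||R|)$, is swallowed by the $(1+5\epsilon)$ slack thanks to the hypothesis $|S|\leq d^{-1/\epsilon}|L\cup R|$. Multiplying the two resulting inequalities and dividing through by $m^2=(d_L|L_S|)(d_R|R_S|)$ isolates $(d_L-1)(d_R-1)$ on the left, and $\lambda^2$ plus the asymmetric correction $(\sqrt{c-1}-\sqrt{d-1})^2$ on the right (the latter arising from the cross-interaction of the two distinct diagonal shifts $c$ and $d$).

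\textbf{Main obstacle.} The delicate part, and presumably where the flaw in the original \cite{hmmp24} argument sits, is the two-sided bookkeeping: the asymmetric correction $(\sqrt{c-1}-\sqrt{d-1})^2$ comes precisely from the interplay between the distinct $c$ and $d$ diagonals in the two length-$2$ walk counts, and tracking this through the $(1+5\epsilon)$ perturbation without losing constants or double-counting edges requires careful parameter hygiene. A natural point of failure in a one-sided proof is to use the symmetric diagonal $\frac{c+d}{2}$ in place of the true $c,d$, which is tempting but loses the asymmetric term. The extension to $\min\{c,d\}=2$ is then essentially automatic: non-backtracking walks are well-defined at every vertex as long as all degrees are $\geq 2$, and the hypothesis $cd>6$ precisely excludes the degenerate case $c=d=2$ in which the Ramanujan bound $\sqrt{c-1}+\sqrt{d-1}=2$ coincides with the trivial bound $\sqrt{cd}=2$ and the inequality carries no content.
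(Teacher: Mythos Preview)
Your approach is genuinely different from the paper's. The paper does \emph{not} count length-$2$ walks directly; instead it works with the Bethe--Hessian matrix $H_{G[S]}(t)=(D_{G[S]}-I)t^2-A_{G[S]}t+I$ and shows it is positive definite for every $t$ with $1/t^2\geq\lambda^2-(\sqrt{c-1}-\sqrt{d-1})^2$. This positive-definiteness is established by extending a test function on $G[S]$ to a depth-$\ell$ biregular tree extension, folding that extension back into $G$ via a covering-style map, and then applying the spectral bound to the folded function on the full graph $G$. A separate black-box lemma (Lemma~6.2 of \cite{hmmp24}) converts positive definiteness of $H_{G[S]}(t)$ into the density bound $(d_L-1)(d_R-1)\leq 1/t^2$. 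So the $(\sqrt{c-1}-\sqrt{d-1})^2$ term in the paper arises from solving the Ihara--Bass/Bethe--Hessian quadratic, not from any walk-count cross term.

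Your two individual walk inequalities are correct: one does get $d_L(d_R-1)\leq\lambda^2-c$ and $d_R(d_L-1)\leq\lambda^2-d$ (after absorbing the trivial-eigenspace mass into the $(1+5\epsilon)$ slack, which works under $|S|\leq d^{-1/\epsilon}|L\cup R|$). The gap is the combination step. Multiplying and dividing by $m^2$ gives
\[
(d_L-1)(d_R-1)\;\leq\;\frac{(\lambda^2-c)(\lambda^2-d)}{d_Ld_R},
\]
and the right side is \emph{not} bounded by $\lambda^2-(\sqrt{c-1}-\sqrt{d-1})^2$ uniformly in $d_L,d_R$ (take $d_L,d_R\downarrow 1$). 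So the claimed ``cross-interaction'' mechanism for producing the asymmetric correction does not go through as written. That said, your route is salvageable more simply than you propose: the \emph{second} inequality alone already suffices. For $d_L\geq 1$ one has $(d_L-1)(d_R-1)\leq d_R(d_L-1)\leq\lambda^2-d$, and since $(\sqrt{d-1}-\sqrt{c-1})^2\leq d-1<d$ we get $\lambda^2-d\leq\lambda^2-(\sqrt{c-1}-\sqrt{d-1})^2$ for free; the case $d_L<1$ is trivial. So the correction term is not a delicate cross term at all---it is weaker than the $-d$ you already have---and the ``main obstacle'' you flag dissolves once you stop trying to multiply.
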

We defer the proof of \Cref{subgraphexpansion} to \Cref{spectralsec}.

Then, we give a formal definition of Tripartite Product, and prove \Cref{thmlosslessunique}.
\begin{definition}[Tripartite Product]\label{tripartite}
Given a $d_1$-right-regular bipartite graph $G_1=(L\cup M,E_1)$, a $d_2$-left-regular bipartite graph $G_2=(M\cup R,E_2)$ and a bipartite graph $G_0=(L'\cup R',E_0)$ where $|L'|=d_1$ and $|R'
|=d_2$, their tripartite product is defined as a bipartite multigraph $G=(L\cup R, E)$ where:
\begin{equation*}
E=\{\!\!\{(n_{G_1}(w,i),n_{G_2}(w,j))\colon \forall w\in M,\forall (i,j)\in E_0\}\!\!\}.
\end{equation*}
Here $n_{G_t}(w,k)$ means the $k$-th neighbor of $w$ in graph $G_t$.
\end{definition}
\begin{theorem}\label{thmlosslessunique}
For any $\epsilon>0$, there exists $\delta,C>0$ such that: For infinitely many $d_1,d_2$ where $d_1\leq Cd_2$, there exists 
$\delta'$ and a strongly explicit construction of an infinite family of $(d_1,d_2)$-biregular graphs $(G_n=(L_n\cup R_n,E_n))_{n\ge 1}$ such that:
\begin{compactitem}
\item[(1)] For any $S\subseteq L_n$ where $|S|\leq \delta' |L_n|$, we have $|N(S)|\ge (1-\epsilon)d_1|S|$.

\item[(2)] For any $S\subseteq R_n$ where $|S|\leq |R_n|^{1/3-\epsilon}$, $|\mathsf{UN}_G(S)|\ge \delta d_2|S|$.

\item[(3)] For any $S\subseteq R_n$ where $|S|\leq \exp(\delta'\sqrt{\log{|R_n|}})$, we have $|N(S)|\ge (1-\epsilon)d_2|S|$
\end{compactitem}
\end{theorem}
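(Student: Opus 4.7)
The plan is to instantiate the tripartite product of \Cref{tripartite} with three carefully chosen ingredients. For the middle-to-left base graph $G_1=(L\cup M,E_1)$, I will take a $(D,D)$-biregular Ramanujan graph from \Cref{lps88cons2}, so that $\lambda_2(G_1)\le 2\sqrt{D-1}$ and the girth is at least $\tfrac{4}{3}\log_D |V(G_1)|$. For the middle-to-right base graph $G_2=(M\cup R,E_2)$, I will use \Cref{ow20cons} to obtain a $(D,D')$-biregular near-Ramanujan graph with $D'=\lceil \eta D\rceil$ for a suitable small constant $\eta=\eta(\epsilon)$, so that $\lambda_2(G_2)\le (1+\gamma)(\sqrt{D-1}+\sqrt{D'-1})$ and $G_2$ is $\Omega(\sqrt{\log |V(G_2)|})$-bicycle-free. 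For the gadget $G_0=(L'\cup R',E_0)$ with $|L'|=|R'|=D$, I will invoke \Cref{linearbiregular} with the parameter regime $d_1=d_2=d$ where $d=\Theta(D)$ is a constant multiple of $D$ large enough so that $d\gg \sqrt{D\cdot D'}$; this yields a $(d,d)$-biregular graph with near-optimal two-sided unique-neighbor expansion for all sets of size up to a constant fraction of $D$. Standard bijective indexing makes the resulting tripartite product $(d_1,d_2)$-biregular with constant aspect ratio $d_1/d_2\le C$ and strongly explicit.

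For condition (1), given $S\subseteq L_n$ with $|S|\le\delta'|L_n|$, let $\mathcal{U}=N_{G_1}(S)\subseteq M$ and partition $\mathcal{U}=\mathcal{U}_h\cup \mathcal{U}_l$ by an edge-count threshold $h$ in $G_1[S\cup \mathcal{U}]$. Applying \Cref{subgraphexpansion} to $G_1$ bounds the fraction of $G_1$-edges from $S$ landing in $\mathcal{U}_h$ by $O(h^{-1})\cdot O(1)$, which I can make $\le\epsilon/4$ by choosing $h$ a sufficiently large constant. For each vertex $w\in\mathcal{U}_l$ the $G_0$-copy at $w$ sees at most $h$ marked left-ports, so by the gadget's near-optimal expansion at least $(1-\epsilon/4)d_1$-fraction of its incident edges produce unique neighbors among the $R'$-ports of the copy. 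These ``locally unique'' right-ports map, via $G_2$, into $R_n$; I then apply \Cref{subgraphexpansion} again, this time to $G_2$, to argue that the subgraph induced on $R_n$-targets of these locally unique ports has bounded average degree, so only an $\epsilon/4$-fraction of them collide on the right. Summing the three loss sources $(\epsilon/4$ from heavy middle vertices, $\epsilon/4$ from the gadget, $\epsilon/4$ from right-collisions$)$ gives at least $(1-\epsilon)d_1|S|$ unique, hence distinct, right neighbors; this proves (1).

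For condition (2), given $S\subseteq R_n$ of polynomial size $|S|\le |R_n|^{1/3-\epsilon}$, I again pull back through $G_2$ to obtain a vertex set $\mathcal{U}\subseteq M$ of $G_0$-copies receiving marked right-ports. The key difference from (1) is that $d=\Theta(D)$ is not large enough compared to $D$ for the subgraph-density argument on $G_1$ to work (this is the source of the parameter trade-off noted in the remark after the theorem). Instead I invoke \Cref{girthtoexpansion} on $G_1$ using the $\tfrac{4}{3}\log_D$ girth: the pulled-back set on the $M$-side has cardinality polynomial in $|V(G_1)|^{1/3-\epsilon}$, which by the girth-to-expansion reduction expands nearly losslessly into $L_n$; this guarantees that a constant fraction of the edges leaving the gadget copies toward $L_n$ land on unique left-neighbors. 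Combined with the gadget's unique-neighbor expansion on the marked right-ports and a subgraph-density application to $G_2$ (whose parameters do satisfy the regime since $d\gg \sqrt{D\cdot D'}$), this yields $|\mathsf{UN}_G(S)|\ge \delta d_2|S|$ for an absolute constant $\delta=\delta(\epsilon)>0$. Condition (3) is a weaker, sublogarithmic-set version of the same analysis: for $|S|\le \exp(\delta'\sqrt{\log |R_n|})$, I instead use \Cref{bicycletoexpansion} on $G_2$ together with \Cref{girthtoexpansion} on $G_1$, both of which give losslessness in their respective regimes, and chase the three stages to produce $(1-\epsilon)d_2|S|$ distinct neighbors.

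The main obstacle will be the parameter balancing in step (2): I need the gadget degree $d$ large enough that \Cref{subgraphexpansion} applied to $G_2$ yields near-lossless expansion (requiring $d=\Omega(\sqrt{DD'})$), yet also need the pulled-back set in $M$ to remain within the polynomial regime where \Cref{girthtoexpansion} applies to $G_1$; since girth of $G_1$ is only $\tfrac{4}{3}\log_D$, this forces $|S|\le |R_n|^{1/3-\epsilon}$ rather than a better exponent, and controlling the loss from the three distinct sources (gadget, $G_2$-collisions, $G_1$-collisions) simultaneously requires choosing $\eta$, $d/D$, and the thresholds in a consistent order. Making $\beta=d_1/d_2$ a constant strictly greater than $1$, as noted in the remark, comes for free from $\eta<1$ being bounded below.
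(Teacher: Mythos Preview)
Your proposal is correct and follows essentially the same approach as the paper: the same choice of $G_1$ (LPS Ramanujan with large girth), $G_2$ (near-Ramanujan bicycle-free from \cite{ow20}), and gadget $G_0$ via \Cref{linearbiregular} with $d=\Theta(\sqrt{DD'})\cdot C_2=\Theta(D)$, and the same three-stage analysis for each item (spectral/spectral for (1), spectral on $G_2$ plus girth on $G_1$ for (2), bicycle-freeness on $G_2$ plus girth on $G_1$ for (3)). One small clarification: your stated reason for why the spectral route on $G_1$ fails in part (2) is slightly off; the real issue is that the gadget only guarantees $|T_u|\ge\delta_3 d$ unique ports per light middle vertex with $\delta_3$ potentially as small as $\exp(-\Theta(C_2^2))$, so the resulting average degree on the $\mathcal{U}$-side of $G_1[T\cup\mathcal{U}]$ is not large enough relative to $\lambda_2(G_1)^2\approx 4(D-1)$ to force near-losslessness via \Cref{subgraphexpansion}, whereas \Cref{girthtoexpansion} only needs this degree to exceed a fixed constant.
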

\begin{proof}
By \Cref{dirchlet}, there are infinitely many odd primes $p\equiv 1\mod{4}$. For any sufficiently large such $p$. By \Cref{lps88cons2}, we can construct an infinite family of graph $G_1=(L\cup M,E_1)$ such that:
\begin{compactitem}
\item[(1)] $G_1$ is a $(p+1,p+1)$-biregular graph.

\item[(2)] Girth of $G_1$ is at least $\frac{4}{3}\log_p{|L|}$

\item[(3)] $G_1$ is a Ramanujan graph, where $\lambda_2(G_1)\leq 2\sqrt{p}$.
\end{compactitem}
Let $C_2=C_2(\epsilon)$ be some large enough constant to be determined later, and define $D=p+1, D'=D/C$ for some large enough $C=C(\epsilon,C_2)$ to be determined later. By \Cref{ow20cons}, we can construct an infinite family of $(D,D')$-biregular graph $G_2=(M\cup R,E_2)$ that:
\begin{compactitem}
\item[(1)] $G_2$ is a near-Ramanujan graph, where $\lambda_2(G_2)\leq (\sqrt{D-1}+\sqrt{D'-1})(1+1/(C_2D))$.

\item[(2)] There exists $\epsilon'=\epsilon'(\epsilon,C,C_2,D)$ such that $G_2$ is $(\epsilon'\sqrt{\log{|R|}})$-bicycle-free. 
\end{compactitem}
Then we need to construct the gadget graph $G_0=(L'\cup R',E_0)$. Setting $\delta_S=1/C_2,C_S=31C_2^2,d_S=C_2\sqrt{(D-1)(D'-1)}$ be parameters in \Cref{linearbiregular} and invoke it, we can construct a bipartite graph $G_0=(L'\cup R',E_0)$ by exhaustive search such that:
\begin{compactitem}
\item[(1)] $|L'|=|R'|=D$, $G_0$ is a $(d,d)$-biregualr graph for some large enough $C:=C(\epsilon,C_2)$ where $d=d_S$.

\item[(2)] For all  $S\subseteq L(H)$  (or $R(H)$) where $|S|\leq \frac{31C_2^2D}{d}$, we have $\frac{|\mathsf{UN}_{G_0}(S)|}{|S|}\ge (1-1/C_2)d\exp(-|S|d/D)$.
\end{compactitem}
Our final graph should be tripartite product (\Cref{tripartite}) $G=(L\cup R,E)$ of $G_1,G_2$ and $G_0$. We know $G$ is a $(d_1=Dd,d_2=D'd)$-biregular graph with $d_1\leq Cd_2$. Now we are ready to prove the three expansion properties:
\paragraph{Left-to-right Lossless Expansion for Linear-sized Sets}

Setting $\epsilon_S=10^{-5}/(C_2D)$ be the parameter in \Cref{subgraphexpansion} and invoke it, it follows that 
there exists $\delta_1>0$ such that:

1. For any $S_L\subseteq L$ and $S_R\subseteq M$ with $|S|\leq \delta_1D^3|L\cup M|$, where $S=S_L\cup S_R$, let $d_L,d_R$ denote the left/right average degree of $G_1[S]$, we have:
\begin{equation}\label{subgraphexpg1}
(d_L-1)(d_R-1)\leq 21(D-1)
\end{equation}

2. For any $S_L\subseteq M$ and $S_R\subseteq R$ with $|S|\leq \delta_1D^3|M\cup R|$, where $S=S_L\cup S_R$, let $d_L,d_R$ denote the left/right average degree of $G_2[S]$, we have:
\begin{equation}\label{subgraphexpg2}
(d_L-1)(d_R-1)\leq 25\sqrt{(D'-1)(D-1)}
\end{equation}

Now fix any $S\subseteq L$ where $|S|\leq \delta_1|L|$, we want to guarantee its unique-neighbor expansion. From the definition of $G_0$, we know $d=C_2\sqrt{(D-1)(D'-1)}$. Define $h:=42C_2+1$, and we let $\mathcal{U}=N_{G_1}(S)$ denote the set of neighbors of $S$ in $G_1$ and $\mathcal{U}_h\subseteq \mathcal{U}$ denote the set of neighbors with degree at least $h$ in $G_{1,S}:=G_1[S\cup \mathcal{U}]$. Let $d_h$ denote the average degree of $S$ in $G_1[S\cup \mathcal{U}_h]$, by \Cref{subgraphexpg1}, we know:
\begin{equation}\label{fewhighdegree}
d_h\leq 1+\frac{D}{2C_2}
\end{equation}
It follows that $\frac{D}{C_2d}\ge \frac{\sqrt{C}}{C_2^2}\ge 42C_2+1=h$ by setting $C:=C(\epsilon,C_2)$ large enough. Therefore, 
By setting $C$ large enough, it follows that $h\leq \frac{D}{C_2d}$.

For any $u\in \mathcal{U}_l=\mathcal{U}\backslash\mathcal{U}_h$, let $S_u$ be the set of $u$'s neighbors in $G_{1,S}$ and thus $|S_u|\leq h$ is its degree in $G_{1,S}$. Consider the $G_0$ copy $G^u_0$ placed on $u$ as gadget graph, we define $T_u\subseteq R$ as the set of unique-neighbors of $S_u$ in $G^u_0$.  By property of $G_0$, we can lower bound $|T_u|$ as:
\begin{equation}\label{lbgadget}
|T_u|\ge (1-1/C_2)|S_u|d\exp(-|S_u|d/D)\ge (1-1/C_2)|S_u|d\exp(-1/C_2)
\end{equation}
Let $T:=(\bigcup_{u\in\mathcal{U}_l}T_u)$ denote these vertices. It follows that for any $v\in T$, if $v$ has degree $1$ in $G':=G_2[\mathcal{U},T]$, then $v$ must be a unique-neighbor of $S$ in $G$. Let $d'_L,d'_R$ be the left/right average degree of graph $G'$, we first lower bound $d'_L$: First, by \Cref{fewhighdegree}, we can get:
\begin{equation}\label{lbu}
|\mathcal{U}|=|\mathcal{U}_l|+|\mathcal{U}_h|\leq |\mathcal{U}_l|+|S|d_h/h\leq |\mathcal{U}_l|+\frac{|\mathcal{U}_l|d_h}{(D-d_h)}\leq(1+1/C_2)|\mathcal{U}_l|
\end{equation}

By \Cref{lbu} we can lower bound $d'_L$ as:
\begin{equation}\label{lbdl}
d'_L\ge\frac{|\mathcal{U}_l|d}{|\mathcal{U}|}\ge \frac{d}{(1+1/C_2)}
\end{equation}

By \Cref{subgraphexpg2} on $G'$ and \Cref{lbdl}, we can finally upper bound the right average degree $d'_R$ as:
\begin{equation}\label{ubdr}
d'_R\leq 1+\frac{25\sqrt{(D-1)(D'-1)}}{d'_L-1}\leq 1+30/C_2
\end{equation}

It follows from \Cref{ubdr,fewhighdegree,lbgadget} that:
\begin{align*}
|N_G(S)|&\ge|T|\ge \frac{\sum_{u\in\mathcal{U}_l}|T_u|}{d'_R}\ge\frac{(1-1/C_2)\exp(-1/C_2)d\sum_{u\in\mathcal{U}_l}|S_u|}{1+30/C_2}\\
&\ge\frac{(1-1/C_2)\exp(-1/C_2)d|S|(D-d_h)}{1+30/C_2}\\
&\ge \frac{(1-1/C_2)^2\exp(-1/C_2)}{1+30/C_2}Dd|S|
\end{align*}
By setting $C_2=C_2(\epsilon)$ large enough, by the above lower bound we can get:
\begin{equation*}
N(S)\ge (1-\epsilon)Dd|S|=(1-\epsilon)d_1|S|
\end{equation*}
\paragraph{Right-to-left Unique-neighbor Expansion for Polynomial-size Sets}
Let $S\subseteq R, |S|\leq |R|^{1/3-\epsilon}$ be any subset of right vertices with polynomial-bounded size. We want to show its unique-neighbor expansion. Define $\mathcal{U}\subseteq M$ as the set of neighbors of $S$ in $G_2$. Let $h=30C_2\sqrt{C}+1$, we use the similar definitions as in the previous paragraph that $\mathcal{U}_h$ is the set of all vertices with an average degree at least $h$ in $G_{2,S}=G_2[\mathcal{U}\cup S]$. Define  $\mathcal{U}_l=\mathcal{U}\backslash\mathcal{U}_h$, and $d_h$ as the average degree of $S$ in $G_2[\mathcal{U}_h\cup S]$. By \Cref{subgraphexpg2}, we have:
\begin{equation}\label{fewhighdegree2}
d_h\leq 1+\frac{25\sqrt{(D-1)(D'-1)}}{h-1}\leq 1+\frac{5D}{6C_2C}\leq \frac{6D}{7C_2C}\leq \frac{6D'}{7C_2}
\end{equation}

For any $u\in \mathcal{U}_l$,  let $S_u$ be the set of $u$'s neighbors in $G_{2,S}$ and thus $|S_u|\leq h$ is its degree in $G_{2,S}$. Consider the $G_0$ copy $G^u_0$ placed on $u$ as gadget graph, we define $T_u\subseteq L$ as the set of unique-neighbors of $S_u$ in $G^u_0$.  By property of $G_0$, we can lower bound $|T_u|$ as:
\begin{equation}\label{lbgadgetinm}
|T_u|\ge (1-1/C_2)|S_u|d\exp(-|S_u|d/D)\ge(1-1/C_2)|S_u|d\exp(-|S_u|C_2/\sqrt{C})
\end{equation}

Consider $\delta_3=\delta_3(\epsilon,C_2,C)=\min_{1\leq t\leq h}\{(1-1/C_2)t\exp(-tC_2/\sqrt{C})\}$ as a constant depending on $C,C_2$. By \Cref{lbgadgetinm}, we get that:
\begin{equation}\label{lbgadgetr}
|T_u|\ge(1-1/C_2)|S_u|d\exp(-|S_u|C_2/\sqrt{C})\ge \delta_3 d
\end{equation}

Let $T:=(\bigcup_{u\in\mathcal{U}_l}T_u)$ denote these vertices. It follows that for any $v\in T$, if $v$ has degree $1$ in $G':=G_1[T\cup \mathcal{U}]$, then $v$ must be a unique-neighbor of $S$ in $G$. Our goal is to establish a lower bound on the number of such vertices $v$. Similarly to the previous argument, we first need an upper bound on $|\mathcal{U}|$. The following is from \Cref{fewhighdegree2}:
\begin{equation}\label{lbu2}
|\mathcal{U}|=|\mathcal{U}_l|+|\mathcal{U}_h|\leq |\mathcal{U}_l|+|S|d_h/h\leq |\mathcal{U}_l|+\frac{|\mathcal{U}_l|d_h}{(D'-d_h)}\leq(1+1/C_2)|\mathcal{U}_l|
\end{equation}

The lower bound of right average degree $d'_R$ of $G'$ follows from \Cref{lbu2}:

\begin{equation}\label{lbdr2}
d_R'\ge\frac{|\mathcal{U}_l|\delta_3 d}{|\mathcal{U}|}\ge\frac{\delta_3 d}{1+1/C_2}
\end{equation}

When $d=\frac{C_2D}{\sqrt{C}}$ is large enough, we can guarantee $d_R'/4C_2\ge \frac{\delta_3 d}{4C_2+4}>1$. Setting $\epsilon_S=1/(2C_2), d'_S=d'_R/(4C_2)$ as parameters in the statement of \Cref{girthtoexpansion} and invoke it, we know that: Let $\delta_S:=\delta_S(C_2)$ be some constant and $g\ge \frac{4}{3}\log_{D-1}|M|$ is the girth of $G_1$, we can guarantee: If we have $|\mathcal{U}|\leq \delta_S(d'_R/(4C_2))^{g/4-1}$ (We call this \emph{Condition A}), then there are at least $(1-\epsilon_S)d'_R|\mathcal{U}|$ neighbors in $G'$, which implies $G'$ has at least $(1-1/C_2)d'_R|\mathcal{U}|$ unique-neighbors of $\mathcal{|U|}$. By the previous argument, it automatically implies $|\mathsf{UN}_G(S)|\ge (1-1/C_2)d'_R|\mathcal{U}|$.

By some calculation, we can upper bound $|\mathcal{U}|$ as:
\begin{align*}
|\mathcal{U}|&\leq |S|D'\leq D'|R|^{1/3-\epsilon}\leq (C|M|)^{1/3-\epsilon/2}&\text{(}|M| \text{ large enough and }|R|=C|M|\text{)}\\
&\leq \delta_S(\frac{|M|^{1/3}}{D-1})^{1-\epsilon}&\text{(}|M| \text{ large enough)}\\
&\leq \delta_S(\frac{|M|^{1/3}}{D-1})^{(\log_{D-1}D+\log_{D-1}(\frac{\delta_3}{4C(1+1/C_2)}))}&\text{(}D\text{ large enough)}\\
&= \delta_S(\frac{|M|^{1/3}}{D-1})^{(\log_{D-1}(\frac{\delta_3C_2D}{C(1+1/C_2)})-\log_{D-1}(4C_2))}&\\
&=\delta_S(\frac{|M|^{1/3}}{D-1})^{(\log_{D-1}(\frac{\delta_3 d}{1+1/C_2})-\log_{D-1}(4C_2))}&\\
&\leq \delta_S(\frac{|M|^{1/3}}{D-1})^{\log_{D-1}(\frac{d'_R}{4C_2})}&\text{(\Cref{lbdr2})}\\
&=\delta_S(\frac{d'_R}{4C_2})^{\frac{1}{3}\log_{D-1}(|M|)-1}&\\
&\leq \delta_S(d'_R/(4C_2))^{g/4-1}&
\end{align*}
Therefore, we know \emph{Condition A} holds, and from the previous discussion, we know that:
\begin{align*}
|\mathsf{UN}_G(S)|&\ge (1-1/C_2)d'_R|\mathcal{U}|\ge (1-1/C_2)\sum_{u\in\mathcal{U}_l}|T_u|&\\
&\ge (1-1/C_2)^2\sum_{u\in\mathcal{U}_l}|S_u|d\exp(-|S_u|C_2/\sqrt{C})&\text{(\Cref{lbgadgetinm})}\\
&\ge (1-1/C_2)^2\exp(-31C^2_2)d\sum_{u\in\mathcal{U}_l}|S_u|&\text{(}|S_u|\leq h\text{)}\\
&\ge (1-1/C_2)^2\exp(-31C^2_2)d(D'-d_h)|S|&\\
&\ge (1-1/C_2)^3\exp(-31C^2_2)d_2|S|&\text{(\Cref{fewhighdegree2})}
\end{align*}
Setting $\delta=\delta(C_2,C)=(1-1/C_2)^3\exp(-31C)$, from above we can guarantee $|\mathsf{UN}_G(S)|\ge\delta d_2|S|$, which proves the desired statement.
\paragraph{Right-to-left Lossless Expansion for Subpolynomial-size Sets}
Let $\delta_4:=\epsilon'/10$. For any $S\subseteq R$ that $|S|\leq \exp(\delta_4\sqrt{\log{|R|}})$, our goal is to prove its lossless expansion.

Setting $d_S=D',\epsilon_S=1/(2C_2)$ and $d'_S=D'/(4C_2)$ as parameters in the statement of \Cref{bicycletoexpansion} and invoke it, we can guarantee that for some $\delta_S$, if $|S|\leq \delta_S(\frac{D'}{4C_2})^{g/2-1}$, then $|N_{G_2}(S)|\ge (1-1/(2C_2))D'|S|$, where $g\ge \epsilon\sqrt{\log{|R|}}$ is the bicycle-freeness of $G_2$. We can verify that $|S|$ satisfies this condition:
\begin{equation*}
|S|\leq \exp(\delta_4\sqrt{\log{|R|}})\leq \delta_S(\frac{D'}{4C_2})^{(1.1\delta_4\sqrt{\log{|R|}})/(\log(\frac{D'}{4C_2}))}\leq \delta_S(\frac{D'}{4C_2})^{0.11\epsilon'\sqrt{\log|R|}}\leq \delta_S(\frac{D'}{4C_2})^{g/2-1}
\end{equation*}
Therefore, $|S|$ has at least $(1-1/(2C_2))D'|S|$ neighbors in $G_2$, which implies it has at least 
 $|\mathsf{UN}_{G_2}(S)|\ge (1-1/C_2)D'|S|$ unique-neighbors. Let $\mathcal{U}=\mathsf{UN}_{G_2}(S)$, for each $u\in\mathcal{U}$, consider the gadget graph $G^u_0$ placed on $u$. Since $u$ is a unique-neighbor of $S$ in $G_2$, there are at least $d$ vertices in $L$ that are also neighbors of $S$ in $G$. Let $T_u$ denote the set of these vertices and $T:=\bigcup_{u\in\mathcal{U}}T_u\subseteq N_G(S)$ be a subset of $N_G(S)$, from the above discussion we know that for each $u\in\mathcal{U}$, it has degree at least $d$ in $G':=G_1[T\cup \mathcal{U}]$. Let $d'_R\ge d$ denote the real right average degree of $G'$. 

 Setting $\epsilon_S=1/C_2, d'_S=d'_R/(2C_2)$ as parameters in statement of \Cref{girthtoexpansion} and invoke it, we know that: Let $\delta_S:=\delta_S(C_2)$ be some constant and $g\ge \frac{4}{3}\log_{D-1}|M|$ be the girth of $G_1$, If we have $|\mathcal{U}|\leq \delta_S(d'_R/(2C_2))^{g/4-1}$ (We call this \emph{Condition B}), then there are at least $(1-1/C_2)d'_R|\mathcal{U}|\ge (1-1/C_2)d|\mathcal{U}|$ neighbors of $\mathcal{U}$ in $G'$. This also implies $|T|\ge (1-1/C_2)^2dD'|S|=(1-1/C_2)^2d_2|S|$. Since $T\subseteq N_{G}(S)$, we can finish the proof just by setting $C_2$ large enough such that $(1-1/C_2)^2\ge 1-\epsilon$ given \emph{Condition B}.
 
\emph{Condition B} can be easily verified as:
\begin{align*}
|\mathcal{U}|&\leq D'\exp(\delta_4\sqrt{\log|R|})\leq \delta_S(\frac{d'_R}{2C_2})^{1.1\delta_4\sqrt{\log(D-1)\log_{D-1}(|M|)}}\\
&\leq \delta_S(\frac{d'_R}{2C_2})^{0.0001\log_{D-1}|M|}\leq \delta_S(d'_R/(2C_2))^{g/4-1}.
\end{align*}
The above holds since $\frac{d'_R}{2C_2}\ge\frac{D}{2\sqrt{C}}>e$ by large enough $D$, and $|M|$ is also large enough.

\end{proof}
\section{Two-sided Unique-neighbor Expanders with Better Expansion for Polynomial-sized Sets}\label{edgevertexsec}
In this section, we give strongly explicit construction of two-sided unique-neighbor expanders for $\Theta(n)$-sized sets. At the same time, we guarantee that for a small polynomial $p$, all vertex sets with size at most $p(n)$ have almost `half of' neighbors to be unique neighbors. We use the same tripartite product framework. However, this time we use two edge-vertex incidence graphs as base graphs.
\begin{definition}\label{edgevertexdef}
Given any $d$ regular graph $G=(V,E)$, its edge-vertex incidence graph $G'=(E\cup V, E')$ is defined as the $(2,d)$-biregular bipartite graph where the left vertex set is $E$, the right vertex set is $V$, and for each left vertex $e=(u,v)\in E$, it has exactly two neighbors $u,v\in V$ in $G'$ 
\end{definition}
We need the arguments below to derive the spectral bound of edge-vertex incidence graph.
\begin{definition}[Irreducible Matrix]
For any square matrix $M\in \mathbb{R}^{n\times n}$, let $G_M=([n],E)$ be the \emph{directed} graph where for any $i,j\in[n],(i,j)\in E$ iff $M_{ij}\neq 0$. If $G_M$ is a strongly connected graph, we call $M$ an irreducible matrix.  
\end{definition}
\begin{fact}\label{connected}
For any undirected graph $G$, if $G$ is connected, then its adjacency matrix $A_G$ is an irreducible matrix.
\end{fact}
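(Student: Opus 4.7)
The plan is to unfold the two definitions and observe that for an undirected graph $G$, the directed graph $G_{A_G}$ associated with the adjacency matrix is essentially $G$ with every edge replaced by a pair of anti-parallel arcs, so connectivity of $G$ translates directly into strong connectivity of $G_{A_G}$.

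Concretely, I would first record that $A_G$ is a symmetric $0/1$ matrix: $(A_G)_{ij} = (A_G)_{ji} = 1$ exactly when $\{i,j\} \in E(G)$, and $(A_G)_{ii} = 0$. Consequently, by the definition of $G_{A_G}$, a directed arc $(i,j)$ belongs to $E(G_{A_G})$ if and only if $\{i,j\}$ is an edge of $G$, and moreover $(i,j) \in E(G_{A_G})$ if and only if $(j,i) \in E(G_{A_G})$.

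Next I would fix arbitrary distinct vertices $u, v \in [n]$ and use connectivity of $G$ to obtain an undirected path $u = w_0, w_1, \ldots, w_k = v$ in $G$. By the observation above, each consecutive pair $\{w_t, w_{t+1}\}$ yields both directed arcs $(w_t, w_{t+1})$ and $(w_{t+1}, w_t)$ in $G_{A_G}$. Following these arcs in order gives a directed path from $u$ to $v$, and following them in reverse gives a directed path from $v$ to $u$. Hence $G_{A_G}$ is strongly connected, and $A_G$ is irreducible by definition.

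There is essentially no obstacle here; the statement is immediate once the definitions are aligned. The only mild care is to notice that the self-loop convention in $G_{A_G}$ is irrelevant (strong connectivity is about pairs of distinct vertices, and any vertex trivially reaches itself by the empty walk), so the fact that $(A_G)_{ii} = 0$ does not interfere with the argument.
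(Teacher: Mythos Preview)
Your proposal is correct and is the standard direct verification from the definitions. The paper itself does not give a proof of this statement at all---it is stated as a \textbf{Fact} without proof, treated as well-known background---so there is nothing to compare against; your argument is exactly the routine justification one would supply if asked.
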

\begin{theorem}[Perron-Frobenius Theorem \cite{minc}]\label{pfthm} For any irreducible non-negative symmetric square matrix $M\in\mathbb{R}^{n\times n}$ whose spectral radius is $r=\rho(M)$, $\lambda=r$ is its largest eigenvalue with algebraic multiplicity $1$.
\end{theorem}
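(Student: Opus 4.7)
The plan is to prove this in three stages: (a) produce $r$ as an eigenvalue via a Rayleigh quotient / variational argument exploiting non-negativity, (b) upgrade the corresponding eigenvector to a strictly positive one using irreducibility, and (c) deduce simplicity from the positivity in (b) together with the fact that for a symmetric matrix algebraic and geometric multiplicity coincide.

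First I would show $r = \rho(M)$ is actually attained as an eigenvalue. Since $M$ is real symmetric, every eigenvalue is real and $\rho(M) = \max(|\lambda_{\min}|,|\lambda_{\max}|)$. For any eigenpair $(\lambda,v)$, componentwise $|(Mv)_i| = |\lambda|\,|v_i|$ and $|(Mv)_i| \le \sum_j M_{ij}|v_j| = (M|v|)_i$, so $M|v| \ge |\lambda|\,|v|$ entrywise. A standard Collatz--Wielandt / Rayleigh inequality (taking inner product with a Perron-candidate $|v|$) then yields $\lambda_{\max} \ge |\lambda|$ for every eigenvalue, so $\rho(M) = \lambda_{\max}$ is indeed an eigenvalue.

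Second, I would show the eigenspace of $r$ contains a strictly positive vector. Let $x$ maximize $x^T M x$ over unit vectors; replacing $x$ by $|x|$ (entrywise) can only weakly increase $x^T M x$ since $M \ge 0$, so one may assume $x \ge 0$ with $Mx = r x$. Suppose $x_i = 0$ for some $i$. Then $0 = (Mx)_i = \sum_j M_{ij} x_j$, and non-negativity forces $x_j = 0$ for every $j$ with $M_{ij} > 0$, i.e.\ every out-neighbor of $i$ in $G_M$. Iterating and using strong connectivity of $G_M$ (which is the definition of irreducibility) propagates $x_j = 0$ to all of $[n]$, contradicting $x \neq 0$. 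Hence $x > 0$ componentwise.

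Finally, for simplicity: symmetry of $M$ implies the algebraic multiplicity of $r$ equals the dimension of its eigenspace $E_r$, so it suffices to show $\dim E_r = 1$. If $w \in E_r$ were independent of $x$, I would form $y = x - c w$ with $c$ chosen so that $y$ has a zero coordinate but is not identically zero. Then $y \in E_r$, and applying the same propagation argument to $|y|$ (which also satisfies $M|y| \ge r|y|$, hence in fact $M|y| = r|y|$ by comparing $\langle |y|, M|y|\rangle$ with the Rayleigh quotient) yields $|y| = 0$, a contradiction. The main delicacy is precisely this last step: $y$ need not be sign-definite, so one cannot apply the zero-propagation argument directly to $y$; the fix is the auxiliary observation that equality in $|My| \le M|y|$ forces all nonzero entries of $y$ to share a common sign, after which the irreducibility argument of stage (b) applies verbatim to $|y|$.
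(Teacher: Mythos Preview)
The paper does not prove this statement at all: it is quoted verbatim as the classical Perron--Frobenius theorem with a citation to \cite{minc}, and is then used as a black box in the proof of \Cref{spectraledgevertex}. So there is no ``paper's own proof'' to compare against.

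Your outline is the standard proof and is essentially correct. One small point worth tightening is in stage~(c): you write that $M|y|\ge r|y|$ implies $M|y|=r|y|$ ``by comparing $\langle |y|,M|y|\rangle$ with the Rayleigh quotient'', but since $|y|$ has a zero coordinate by construction, the identity $|y|^T(M|y|-r|y|)=0$ does not by itself force $M|y|-r|y|=0$ entrywise. The clean fix is to pair against the strictly positive eigenvector $x$ from stage~(b) instead: $x^T(M|y|-r|y|) = (Mx)^T|y|-rx^T|y|=0$, and now $x>0$ together with $M|y|-r|y|\ge 0$ does give $M|y|=r|y|$. After that, your sign-propagation and zero-propagation arguments go through as you describe.
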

\begin{proposition}[\cite{spectral}]\label{maxspectral}
Any $d$-regular graph $G$ has spectral radius $\rho(A_G)=d$, where $A_G$ is its adjacency matrix.
\end{proposition}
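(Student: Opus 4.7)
The plan is to bracket the spectral radius $\rho(A_G)$ by $d$ from both sides: exhibit $d$ as an actual eigenvalue for the lower bound, and prove $|\lambda|\le d$ for every eigenvalue $\lambda$ for the upper bound. The argument is short and entirely classical; the only task is to package the two halves together cleanly.

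First I would observe that the all-ones vector $\mathbf{1}\in\mathbb{R}^{|V(G)|}$ is an eigenvector of $A_G$ with eigenvalue $d$: the $v$-th entry of $A_G\mathbf{1}$ simply counts the neighbors of $v$, which equals $d$ by $d$-regularity. Hence $d$ is an eigenvalue of $A_G$, which forces $\rho(A_G)\ge d$. For the reverse inequality I would use the standard maximum-coordinate trick. Since $A_G$ is real symmetric, every eigenvalue $\lambda$ is real, so it suffices to bound $|\lambda|$. Fix an eigenvector $x$ for $\lambda$ and pick an index $i$ maximizing $|x_i|$; expanding the eigenvalue equation at coordinate $i$ gives $\lambda x_i=\sum_{j\sim i}x_j$, and the triangle inequality combined with the choice of $i$ yields $|\lambda||x_i|\le\sum_{j\sim i}|x_j|\le d|x_i|$, so $|\lambda|\le d$. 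Combining with the previous bound gives $\rho(A_G)=d$.

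I do not expect any real obstacle here: the proposition is a basic fact of algebraic graph theory and the two-step argument above suffices. Note that the statement is being set up only to feed into \Cref{pfthm}: once $\rho(A_G)=d$ is known and the graph is connected (so that, by \Cref{connected}, $A_G$ is irreducible), Perron--Frobenius will upgrade this to the stronger conclusion that $d$ is a simple top eigenvalue of $A_G$, which is the fact that the subsequent development actually needs.
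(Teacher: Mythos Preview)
Your proof is correct and is the standard argument. Note, however, that the paper does not actually prove this proposition: it is stated with a citation to \cite{spectral} and used as a black box, so there is no ``paper's own proof'' to compare against.
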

Now we are ready to state a correspondence between eigenvalues of regular graph $G$ and its edge-vertex incidence graph $G'$:
\begin{lemma}\label{eigencorrespond}
For any $d$-regular graph $G=(V,E)$ and its real eigenvalue $\lambda\neq -d$, its corresponding edge-vertex incidence graph $G'=(E\cup V,E')$ has two eigenvalues $\lambda'=\pm\sqrt{\lambda+d}$.

Conversely, for any non-zero eigenvalue $\lambda'\neq 0$ of $G'$, $G$ has eigenvalue $\lambda=\lambda'^2-d$.
\end{lemma}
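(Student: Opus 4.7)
The plan is to reduce the lemma to standard facts about bipartite-form symmetric block matrices. Ordering the vertices of $G'$ with $V$ first and $E$ second, the adjacency matrix decomposes as
\begin{equation*}
A_{G'} = \begin{pmatrix} 0 & B \\ B^T & 0 \end{pmatrix},
\end{equation*}
where $B \in \{0,1\}^{|V|\times|E|}$ is the vertex-edge incidence matrix of $G$, with $B_{v,e} = 1$ iff $v$ is an endpoint of $e$. The central identity, using both the definition of the edge-vertex incidence graph and the $d$-regularity of $G$, is $BB^T = A_G + dI$: the $(u,v)$ entry of $BB^T$ counts edges incident to both $u$ and $v$, which agrees with $(A_G)_{u,v}$ off the diagonal and equals $\deg_G(u) = d$ on the diagonal.

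Squaring the block matrix yields the block-diagonal form $A_{G'}^2 = \mathrm{diag}(BB^T,\, B^T B)$, so the spectrum of $A_{G'}$ on its $V$-block is controlled by that of $BB^T$. For the forward direction, take any real eigenvalue $\lambda$ of $A_G$ with eigenvector $x \neq 0$; since $A_G$ has spectrum in $[-d,d]$ and $\lambda \neq -d$, we have $\lambda + d > 0$. Let $y := B^T x / \sqrt{\lambda+d}$, which is nonzero because $BB^T x = (\lambda+d)x \neq 0$. A direct computation using the block form of $A_{G'}$ then shows that $(x, y)^T$ and $(x, -y)^T$ are eigenvectors of $A_{G'}$ with eigenvalues $+\sqrt{\lambda+d}$ and $-\sqrt{\lambda+d}$ respectively, establishing the first claim.

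For the converse, suppose $A_{G'}(x,y)^T = \mu(x,y)^T$ with $\mu \neq 0$. The two block equations $By = \mu x$ and $B^T x = \mu y$ eliminate $y$ to give $BB^T x = \mu^2 x$, and $x$ must be nonzero since otherwise $y = 0$ as well. Hence $x$ is an eigenvector of $BB^T = A_G + dI$ with eigenvalue $\mu^2$, so $\lambda := \mu^2 - d$ is an eigenvalue of $A_G$, as required. The only subtlety worth flagging is ensuring the constructed eigenvectors are nonzero: this is precisely why the lemma excludes $\lambda = -d$ (which would force the candidate eigenvector in $G'$ to vanish) and $\mu = 0$ (which lies in the shared kernel of $B$ and $B^T$ and carries no information about $A_G$). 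Beyond this, the argument is routine block-matrix algebra, so I anticipate no real obstacle.
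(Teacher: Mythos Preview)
Your proof is correct and is essentially the same construction as the paper's, just packaged in block-matrix language: your $y = B^T x/\sqrt{\lambda+d}$ has $e$-th coordinate $(x_a+x_b)/\lambda'$ for $e=(a,b)$, which is exactly the eigenvector the paper writes down coordinate-wise, and both verifications amount to the identity $BB^T = A_G + dI$ (implicit in the paper's line $(A_{G'}x')_u = \frac{d}{\lambda'}x'_u + \frac{(A_Gx)_u}{\lambda'}$). The only difference is presentation.
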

\begin{proof}
For any eigenvalue $\lambda\neq-d$, let 
$\lambda'=\pm\sqrt{\lambda+d}\neq 0$ and $x\in\mathbb{C}^n$ denote its corresponding eigenvector where $A_Gx=\lambda x$, we can build a vector $x'\in\mathbb{C}^{n+m}$ such that ($n,m$ denotes the number of vertices and edges in $G$ respectively):
\begin{equation*}
\forall u\in E\cup V, x'_u=
\begin{cases}
x_u,&u\in V\\
\frac{x_a+x_b}{\lambda'},&u=(a,b)\in E
\end{cases}
\end{equation*}
Now we want to verify that $A_{G'}x'=\lambda'x'$ and thus $\lambda'$ is an eigenvalue of $G'$:

For any $u=(a,b)\in E$, we have:
\begin{equation*}
(A_{G'}x')_u=x'_a+x'_b=x_a+x_b=\lambda' x'_u
\end{equation*}

For any $u\in V$, we have:
\begin{equation*}
(A_{G'}x')_u=\sum_{e=(u,v)\in E}x'_e=\frac{1}{\lambda'}\sum_{e=(u,v)\in E}(x'_u+x'_v)=\frac{d}{\lambda'}x'_u+\frac{(A_Gx)_u}{\lambda'}=\frac{d+\lambda}{\lambda'}x'_u=\lambda'x'_u
\end{equation*}
This whole argument is reversible, so both directions hold.
\end{proof}
\begin{theorem}\label{spectraledgevertex}
For any connected $d$-regular graph $G$ with the second largest eigenvalue $\lambda_2(G)$, its corresponding edge-vertex incidence graph $G'$ has the second largest eigenvalue $\lambda_2(G')=\sqrt{\lambda_2(G)+d}$
\end{theorem}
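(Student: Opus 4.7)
The plan is to compute the entire spectrum of $G'$ explicitly via \Cref{eigencorrespond}, then use Perron--Frobenius to isolate the top eigenvalues and simply read off the second largest. The forward and converse directions of \Cref{eigencorrespond} already give a near-bijection between nonzero eigenvalues of $G'$ and eigenvalues of $G$ that differ from $-d$; almost all the work is to identify which eigenvalue of $G$ produces the top of the spectrum of $G'$ and argue it is simple.

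First I would pin down the top of the spectrum of $G$. By \Cref{connected}, the adjacency matrix $A_G$ is irreducible; by \Cref{maxspectral} its spectral radius equals $d$; hence the Perron--Frobenius Theorem (\Cref{pfthm}) gives that $d$ is an eigenvalue of $A_G$ with algebraic multiplicity exactly $1$. Writing the eigenvalues of $G$ as $d = \mu_1 > \mu_2 \ge \cdots \ge \mu_n \ge -d$, the strict inequality $\mu_1 > \mu_2$ is the key consequence. Now I apply \Cref{eigencorrespond}: its forward direction produces $\pm\sqrt{\mu_i + d}$ as eigenvalues of $G'$ for every $\mu_i \ne -d$, while its converse direction ensures that every nonzero eigenvalue of $G'$ takes this form. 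In particular, $\pm\sqrt{2d}$ arise from $\mu_1 = d$, and simplicity of $\mu_1$ together with $\mu_2 < d$ guarantees that $\pm\sqrt{2d}$ have multiplicity exactly $1$ in $G'$; these are the trivial Perron eigenvalues of the $(2,d)$-biregular bipartite graph $G'$.

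To finish, the largest absolute value attained by the remaining (nontrivial) eigenvalues of $G'$ equals $\max_{i \ge 2}\sqrt{\mu_i + d}$. Since $\mu_i + d \ge 0$ for all $i$ and $\mu_2 \ge \mu_j$ for all $j \ge 2$, this maximum is $\sqrt{\mu_2 + d} = \sqrt{\lambda_2(G) + d}$. The main obstacle I anticipate is purely a matter of convention: the preliminaries define $\lambda_2(\cdot)$ as $\max(|\lambda_2|,\dots,|\lambda_n|)$, but the identity $\lambda_2(G') = \sqrt{\lambda_2(G) + d}$ is only meaningful when $\lambda_2$ refers to the second largest eigenvalue by value (so that $\lambda_2(G) + d \ge 0$ and the exclusion of trivial eigenvalues is consistent on both sides). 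For non-bipartite $G$ one has $\mu_n > -d$ and no ambiguity arises; for bipartite $G$, the paired trivial eigenvalue $-\sqrt{2d}$ of $G'$ must be excluded, matching the exclusion of $-d$ on the $G$ side. Once this convention is in force, the identification is immediate from \Cref{eigencorrespond} and Perron--Frobenius.
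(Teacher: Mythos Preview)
Your plan is sound and uses the same two ingredients as the paper (\Cref{eigencorrespond} plus Perron--Frobenius), but there is one step that needs more care and which the paper handles differently. You apply Perron--Frobenius to $G$ to get that $d$ is simple, and then assert that this forces $\pm\sqrt{2d}$ to be simple in $G'$. However, \Cref{eigencorrespond} as stated is only an eigen\emph{value} correspondence, not a multiplicity correspondence, so that deduction is not justified by the lemma alone; you would need to argue separately (e.g.\ via injectivity of the eigenvector map implicit in its proof) that two independent $\sqrt{2d}$-eigenvectors of $G'$ would yield two independent $d$-eigenvectors of $G$.

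The paper avoids this by applying Perron--Frobenius to $G'$ instead: since $G$ is connected so is $G'$, hence $A_{G'}$ is irreducible and \Cref{pfthm} gives simplicity of the top eigenvalue of $G'$ directly. From there the paper runs a short contradiction: if two eigenvalues of $G'$ exceeded $\sqrt{\lambda_2(G)+d}$, the larger must be the (simple) Perron root, and the smaller would, via the converse of \Cref{eigencorrespond}, produce an eigenvalue of $G$ strictly between $\lambda_2(G)$ and $d$, contradicting the definition of $\lambda_2(G)$. Your remark about the $\lambda_2$ convention for bipartite graphs is accurate and applies equally to the paper's own treatment.
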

\begin{proof}
First, by \Cref{maxspectral} we know $G$ has two eigenvalues $d$ and $\lambda_2=\lambda_2(G)\leq d$, and by \Cref{eigencorrespond} we know $G'$ has two eigenvalues $\sqrt{2d}$ and $\lambda_2'=\sqrt{\lambda_2+d}$, therefore, $\lambda_2(G')\ge \lambda'_2$.

It suffices to prove $\lambda_2(G')\leq \lambda'_2$ to complete the proof. Suppose by contradiction that $G'$ has two eigenvalues $\lambda'_a\ge\lambda'_b>\lambda'_2$. Since $G$ and $G'$ is connected,  we know from \Cref{connected} that $A_{G'}$ is irreducible and therefore by \Cref{pfthm} we can assume $\lambda'_a=\rho(A_{G'})>\lambda'_b>\lambda'_2\ge 0$. Then, by \Cref{eigencorrespond} again $G$ has three eigenvalues $\lambda_a=\lambda'^2_a-d,\lambda_b=\lambda'^2_b-d$ and $\lambda_2=\lambda'^2_2-d=\lambda_2(G)$, where $\lambda_a>\lambda_b>\lambda_2(G)$, which contradicts the definition of $\lambda_2(G)$. Therefore, it follows that $\lambda_2(G')\leq \lambda'_2$
\end{proof}
\begin{proposition}[\cite{lpsbook}]\label{lpsconnected}
When $q>p^8$, the construction in \Cref{lps88cons} yields a connected graph.
\end{proposition}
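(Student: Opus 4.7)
The plan is to interpret $X^{p,q}$ as a Cayley graph and apply the classification of subgroups of $PGL_2(\mathbb{F}_q)$. Recall that the construction of \Cref{lps88cons}, in the bipartite regime here (where $p$ is a non-residue modulo $q$), identifies $X^{p,q}$ with the Cayley graph of $G := PGL_2(\mathbb{F}_q)$ whose generating set $S$ of size $p+1$ consists of the images of the normalized integer quaternions of norm $p$ under a fixed embedding of the rational quaternion algebra (localized at $p$) into $M_2(\mathbb{F}_q)$. Connectivity of a Cayley graph is equivalent to the generating set generating the entire group, so the task reduces to proving $\langle S \rangle = G$ whenever $q > p^8$.

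First I would invoke Dickson's classification: any proper subgroup of $PGL_2(\mathbb{F}_q)$ is either (i)~contained in a Borel subgroup (stabilizer of a point of $\mathbb{P}^1(\mathbb{F}_q)$), (ii)~contained in the normalizer of a split or non-split maximal torus (dihedral-type), (iii)~isomorphic to one of $A_4, S_4, A_5$, or (iv)~contained in $PGL_2(\mathbb{F}_{q'})$ for a proper subfield $\mathbb{F}_{q'} \subset \mathbb{F}_q$; case (iv) is vacuous since $q$ is prime. To rule out (i) and (ii), note that either would force every pair $s, t \in S$ to share a common eigenvector (or common unordered pair of eigenvectors) over $\overline{\mathbb{F}_q}$, and hence to commute modulo $q$: that is, $q$ would divide every entry of the integer matrix $st - ts$, whose entries are bounded by $O(p)$. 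Since the free group of integer quaternions generated by $S$ contains non-commuting pairs (a direct calculation from the normal-form equations defining $S$), the commutator $st - ts$ is nonzero integrally, so these cases fail whenever $q$ exceeds a small fixed polynomial in $p$. To rule out (iii), I would use the classical freeness theorem for the norm-$p$ integer quaternion semigroup: reduced words of length $k$ in $S$ give distinct integer quaternions of norm $p^k$ with entries bounded by $O(p^{k/2})$, so two such words reduce to the same element of $G$ only when $q = O(p^{k/2})$. For $q > p^{k/2}$ this yields $|\langle S \rangle| \geq (p+1)p^{k-1}$, which exceeds $|A_5| = 60$ already for $k \geq 2$.

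The main obstacle is extracting the explicit exponent $8$ uniformly across all four Dickson cases. Cases (i) and (ii) require bookkeeping of the commutator entries $st - ts$ across all $\binom{p+1}{2}$ generator pairs, which becomes combinatorially intricate, and case (iii) in isolation yields a weaker exponent. The cleanest route, carried out in \cite{lpsbook}, is to invoke an effective form of strong approximation for the algebraic group $PGL_2$ applied to the arithmetic lattice of norm-one integer quaternions: this collapses cases (i)--(iv) into a single surjectivity statement for $q$-adic reduction, and the threshold $q > p^8$ emerges as the effective bound guaranteeing surjectivity onto $PGL_2(\mathbb{F}_q)$. For the applications in the present paper it is only the \emph{polynomial} nature of the threshold that matters, so any sharper analysis of the above obstructions would suffice even if the exponent were weakened.
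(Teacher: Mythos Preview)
The paper does not prove this proposition; it is quoted verbatim from \cite{lpsbook} with no argument supplied, and is used only as a black box to feed into \Cref{spectraledgevertex}. So there is no ``paper's own proof'' to compare your proposal against.

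That said, your sketch is in the spirit of the standard argument (Cayley-graph interpretation of $X^{p,q}$ plus Dickson's subgroup classification), but your handling of cases (i) and (ii) contains a genuine gap. Sharing a common eigenvector over $\overline{\mathbb{F}_q}$ does \emph{not} force two elements of $PGL_2$ to commute: all upper-triangular matrices fix $[1:0]\in\mathbb{P}^1$, yet the Borel subgroup is nonabelian, and the normalizer of a torus is dihedral rather than abelian. Hence the implication ``common eigenvector $\Rightarrow q\mid st-ts$'' fails, and the commutator-size bound you propose does not rule out these cases. The argument in \cite{lpsbook} handles the Borel and dihedral obstructions by a different route --- either by exhibiting generators with incompatible fixed-point sets on $\mathbb{P}^1$, or by appealing directly to an effective strong-approximation statement for the arithmetic quaternion lattice --- rather than by bounding integer commutators. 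Your treatment of case (iii) via the freeness/growth of reduced words is correct in outline. If you want to turn this into a self-contained proof you would need to repair cases (i)--(ii); for the purposes of the present paper, however, the citation suffices.
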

Next, we need the biregular graph with optimal two-sided expansion as gadget graph. The difference from that in \Cref{losslesssec} is here we need the degree to be $\Theta(\sqrt{n})$:
\begin{theorem}[\Cref{smallbiregular2}, Restated]\label{smallbiregular}
Let $C>0,n_1=n\leq n_2\leq \poly(n),\omega(\log{n_1})\leq d_1\leq o(n_2),\omega(\log{n_2})\leq d_2\leq o(n_1)$ and $p=\frac{d_1}{n_2}=\frac{d_2}{n_1}$. For all sufficiently large $n$, there exists $(d_1,d_2)$-biregular graph with $n_1$ and $n_2$ left/right vertices such that:
\begin{compactitem}
\item[(1)] For all $t\leq \frac{Cn_1}{d_2}$,  $\forall S\subseteq R(H)$, s.t. $|S|=t$, $\frac{|\mathsf{UN}_H(S)|}{|S|}\ge (1-o(1))d_2\exp(-pt)$

\item[(2)] For all $t\leq \frac{Cn_2}{d_1}$,  $\forall S\subseteq L(H)$, s.t. $|S|=t$, $\frac{|\mathsf{UN}_H(S)|}{|S|}\ge (1-o(1))d_1\exp(-pt)$
\end{compactitem}
\end{theorem}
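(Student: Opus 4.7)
The plan is to prove the theorem via the probabilistic method, generalizing the argument of~\cite{hmmp24} (which handled the $d = \Theta(\sqrt{n})$ regime) to the full range $\omega(\log n) \leq d_i \leq o(n_{3-i})$. I would sample $H$ from the configuration model on $(d_1, d_2)$-biregular bipartite multigraphs: place $d_1$ half-edges on each of the $n_1$ left vertices and $d_2$ half-edges on each of the $n_2$ right vertices, then take a uniformly random perfect matching between the two half-edge sets (this is well-defined since $n_1 d_1 = n_2 d_2$). Expansion bounds proven in this model should transfer to the uniform simple biregular distribution via a rewiring argument: although the expected number of multi-edges here is $\Theta(d_1 d_2)$ rather than $O(1)$, re-wiring them perturbs the unique-neighbor count of any small set by only an $o(1)$ fraction of the $\Theta(d_2 t)$ expectation, since the expected number of multi-edges incident to any fixed small $S$ is negligible compared to $d_2 t$.

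Fix a right set $S \subseteq R$ with $|S| = t \leq C n_1 / d_2$. This range forces $pt \leq C n_1 d_1 / (n_2 d_2) = C$, so $\exp(-pt) = \Theta(1)$. For each left vertex $v$, its $d_1$ matched right half-edges are to leading order independent uniform samples from the $n_2 d_2$ right half-edges, of which $d_2 t$ lie in $S$. Hence
\[
\Pr[v \in \mathsf{UN}_H(S)] = (1 \pm o(1)) \cdot d_1 (t/n_2)(1 - t/n_2)^{d_1 - 1} = (1 \pm o(1))\, pt \exp(-pt),
\]
giving $\mathbb{E}[|\mathsf{UN}_H(S)|] = (1 - o(1))\, d_2 t \exp(-pt)$. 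For concentration, expose the $d_2 t$ matches of $S$-half-edges in sequence; resampling one match alters $|\mathsf{UN}_H(S)|$ by $O(1)$, so the Azuma--Hoeffding inequality on the resulting Doob martingale yields
\[
\Pr\bigl[|\mathsf{UN}_H(S)| < (1 - \delta)\, \mathbb{E}[|\mathsf{UN}_H(S)|]\bigr] \leq \exp\!\bigl(-\Omega(\delta^2 d_2 t)\bigr).
\]

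Since $d_2 = \omega(\log n_2)$, I can choose $\delta = \delta(n) \to 0$ slowly with $\delta^2 d_2 = \omega(\log n_2)$, making the deviation bound $\exp(-\omega(t \log n_2))$, which comfortably beats the union bound $\binom{n_2}{t} \leq \exp(O(t \log(n_2/t)))$. Summing over $t \leq C n_1 / d_2$ still yields $o(1)$ total failure probability, and the symmetric argument for $S \subseteq L$ uses $d_1 = \omega(\log n_1)$, so both sides' expansion hold simultaneously in a fixed random $H$ with probability $1 - o(1)$.

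The main obstacle is the simple-graph transfer: unlike the sparse regime, the configuration model here almost surely produces multi-edges, so one must carefully argue that a canonical rewiring procedure preserves the sharp $(1 - o(1))$ leading constant in \emph{all} small sets' unique-neighbor counts simultaneously, rather than only the $(1 - O(1))$ constant. A secondary subtlety is maintaining uniformity of the $(1 \pm o(1))$ factors in the expected-value computation across the full range of $t$, particularly near the upper endpoint $t = \Theta(n_1/d_2)$ where the binomial-to-Poisson approximation is at its tightest; this I would handle by explicit bookkeeping in the expansion of $(1 - t/n_2)^{d_1 - 1}$, which costs only a $(1 + O(pt \cdot t/n_2))$ multiplicative error.
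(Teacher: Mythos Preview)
Your approach differs substantially from the paper's. Rather than working in the configuration model, the paper invokes the \emph{Graph Distribution Embedding} theorem of \cite{hmmp24} (\Cref{embedding}): there is a coupling $(G,H)$ with $G\sim\mathbb{G}_{n_1,n_2,p'}$ for $p'=(1-o(1))p$, $H$ a uniformly random $(d_1,d_2)$-biregular \emph{simple} graph, and $E(G)\subseteq E(H)$ with probability $1-o(1)$. The paper then proves the unique-neighbor bound directly for the Erd\H{o}s--R\'enyi graph $G$ (\Cref{randomsample}), where the indicators $\mathbbm{1}[v\in\mathsf{UN}_G(S)]$ are independent across $v$ and a plain Chernoff bound plus union bound over $S$ suffices---no martingale needed. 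Finally, since Chernoff also forces every vertex of $G$ to have degree $(1-o(1))d_i$, the containment $G\subseteq H$ adds only $o(d_i)$ edges per vertex, so passing from $G$ to $H$ destroys at most $o(d_i)|S|$ unique-neighbors of any $S$, which is absorbed into the $(1-o(1))$ factor.

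The obstacle you flag as the ``main obstacle''---transferring from the configuration multigraph to a simple biregular graph while preserving the sharp $(1-o(1))$ constant uniformly over all small $S$---is real, and it is exactly the content that the embedding theorem packages as a black box. Your heuristic that the expected number of multi-edges incident to a \emph{fixed} $S$ is $o(d_2t)$ does not by itself survive the union bound over all $\binom{n_2}{t}$ sets, and a canonical rewiring procedure would need its own concentration argument of comparable strength. So while your expectation computation and Azuma step are fine for the multigraph, completing your route essentially requires reproving (a version of) the embedding theorem; the paper avoids this by citing it.
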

Now we are ready to prove the main theorem in this section \Cref{thmtwoside}.
\begin{theorem}\label{thmtwoside}
For any $\epsilon>0,0\leq \beta_0<\beta_1\leq 1$, there exists $\delta=\delta(\beta_0,\beta_1)$ and infinitely many pairs $(d_1,d_2)$ where $\frac{d_1}{d_2}\in[\beta_0,\beta_1]$ such that: there exists 
$\delta'$ and a strongly explicit construction of an infinite family of $(d_1,d_2)$-biregular graphs $(G_n=(L_n\cup R_n,E_n))_n$ such that:
\begin{compactitem}
\item[(1)] For any $S\subseteq L_n$ where $|S|\leq \delta' |L_n|$, we have $|\mathsf{UN}_{G_n}(S)|\ge \delta d_1|S|$.

\item[(2)] For any $S\subseteq R_n$ where $|S|\leq \delta' |R_n|$, we have $|\mathsf{UN}_{G_n}(S)|\ge \delta d_2|S|$.

\item[(3)] For any $S\subseteq L_n$ where $|S|\leq |L_n|^{\delta'}$, we have $|\mathsf{UN}_{G_n}(S)|\ge (\frac{1}{2}-\epsilon)d_1|S|$.

\item[(4)] For any $S\subseteq R_n$ where $|S|\leq |R_n|^{\delta'}$, we have $|\mathsf{UN}_{G_n}(S)|\ge (\frac{1}{2}-\epsilon)d_2|S|$.
\end{compactitem}
\end{theorem}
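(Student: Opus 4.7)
The plan is to follow the tripartite product framework of \Cref{tripartite}, exactly as in the proof of \Cref{thmlosslessunique}, but using edge-vertex incidence graphs of Ramanujan graphs as both of the two base graphs. Concretely, I would pick two $D_1$- and $D_2$-regular Ramanujan graphs $H_1, H_2$ from \Cref{lps88cons2} (ensured connected via \Cref{lpsconnected} by taking $q>p^8$), arranged so that $|V(H_1)|=|V(H_2)|=n$ and $D_2/D_1$ lies in $[\beta_0,\beta_1]$, and set $G_1$ to be the edge-vertex incidence graph of $H_1$ with $M=V(H_1)$ on the right (making $G_1$ a $(2,D_1)$-biregular graph), and $G_2$ to be the vertex-edge incidence graph of $H_2$ with $M=V(H_2)$ on the left (making $G_2$ a $(D_2,2)$-biregular graph). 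By \Cref{spectraledgevertex} each $G_j$ has $\lambda_2(G_j)\le \sqrt{D_j+2\sqrt{D_j-1}}$, so both are near-Ramanujan, and each inherits girth $\Omega(\log n)$ from $H_j$ (an edge-vertex incidence graph doubles girth). The gadget $G_0$ is then produced by \Cref{smallbiregular} through exhaustive search: a biregular graph on $D_1+D_2$ vertices with degree $d\approx C_2\sqrt{(D_1-1)(D_2-1)}$ for a large constant $C_2$, achieving near-optimal two-sided unique-neighbor expansion, and the final graph $G$ is the tripartite product of $G_1,G_2,G_0$, which is $(d_1,d_2)$-biregular with $d_1/d_2\approx\beta$.

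For the linear-sized two-sided $(\Omega(n),\delta)$ unique-neighbor expansion (parts (1) and (2)), I would transcribe the argument from the proof of \Cref{thmlosslessunique} almost verbatim: for $S\subseteq L$ with $|S|\le\delta'|L|$, define the middle neighborhood $\mathcal U\subseteq M$ in $G_1$, invoke \Cref{subgraphexpansion} on $G_1$ to split $\mathcal U$ into heavy vertices (few, by the subgraph density bound) and light vertices, use the gadget expansion to produce unique-neighbors inside each light gadget copy, and then apply \Cref{subgraphexpansion} on $G_2$ to argue that most of these survive as unique-neighbors in $G$. The spectral bound for both $G_1,G_2$ is $O(\sqrt{D_1\vee D_2})$, and since the gadget degree $d$ is chosen $\gg \sqrt{D_1 D_2}$, the quadratic-in-average-degree upper bound from \Cref{subgraphexpansion} is precisely what absorbs the losses. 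By symmetry (swapping the roles of $G_1$ and $G_2$, of left and right) we get the right-to-left statement.

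For the polynomial-sized $(n^{\Omega(1)},1/2-\epsilon)$ expansion (parts (3) and (4)), I would replace \Cref{subgraphexpansion} with the girth-to-expansion reductions \Cref{girthtoexpansion,bicycletoexpansion}. For $S\subseteq L$ of polynomial size, apply \Cref{girthtoexpansion} to $G_1$ (which has girth $\Omega(\log n)$ and left-degree $2$): this forces $|N_{G_1}(S)|$ to nearly saturate, so that all but an $\epsilon$-fraction of the $2|S|$ middle-incidences are to middle vertices of $G_1$-degree exactly $1$ in $S$. Each such unique middle vertex $u$ picks out a single left vertex in its gadget copy, whose $d$ gadget-neighbors in $R'$ give $d$ candidate unique-neighbors in $R$; applying \Cref{girthtoexpansion} to $G_2$ then forces that nearly all of these candidate right-vertices have $G_2$-degree $1$ in the relevant $M$-set, hence survive as unique-neighbors of $S$ in $G$. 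Summing up, at least $(1-O(\epsilon))\cdot |S|\cdot d$ unique-neighbors are produced per unit of $S$, i.e.\ a $(1/2-\epsilon)$ fraction of the $d_1=2d$ edges per vertex enter unique neighbors, which yields the desired $|\mathsf{UN}_G(S)|\ge(1/2-\epsilon)d_1|S|$.

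The main obstacle is the polynomial-sized case, and specifically the fact that the base graph has left-degree $2$: this sits at the very boundary of where \Cref{girthtoexpansion} yields nontrivial vertex expansion (one must take $\epsilon>1/d=1/2$ in that theorem), which is exactly why the final bound degrades from the lossless $1-\epsilon$ enjoyed in \Cref{thmlosslessunique} to the $1/2-\epsilon$ stated here. This is also precisely the place where the sharpening in \Cref{girthtoexpansion} to the parameter range $d\ge 2$ --- as emphasized in \Cref{girthcompare} --- becomes essential; the analogous reduction from \cite{hmmp24}, valid only for $d\ge 3$, would not suffice. A secondary, more technical obstacle is tracking how the polynomial size threshold $n^{\delta'}$ interacts through two successive applications of the girth reduction (once through $G_1$ and once through $G_2$) so that the final polynomial-size threshold on $S$ is still $n^{\Omega(1)}$; this amounts to a direct calculation analogous to, but more delicate than, the ``Condition A'' and ``Condition B'' style arguments in \Cref{thmlosslessunique}.
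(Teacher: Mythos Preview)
Your proposal takes the same route as the paper, and you correctly identify all the structural ingredients: tripartite product on edge-vertex incidence graphs of two LPS Ramanujan graphs, a gadget from \Cref{smallbiregular}, two applications of \Cref{subgraphexpansion} for the linear-sized regime and two applications of \Cref{girthtoexpansion} for the polynomial-sized regime, with the $1/2-\epsilon$ degradation correctly traced to the left-degree-$2$ base graph and the need for the $d\ge 2$ case of \Cref{girthtoexpansion}.

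However, the gadget degree $d\approx C_2\sqrt{(D_1-1)(D_2-1)}=\Theta(D_1)$ that you carry over from \Cref{thmlosslessunique} will not work here. In that proof $G_1$ has left-degree $D$, so a \emph{constant} heavy/light threshold $h$ suffices and $hd/D$ stays bounded. Here $G_1$ has left-degree $2$: the subgraph density bound \Cref{subgraphexpansion} only gives $d_h\le 1+O(\sqrt{D_1}/(h-1))$, so to keep $2-d_h$ bounded away from $0$ you are forced to take $h=\Theta(\sqrt{D_1})$. With your $d=\Theta(D_1)$ this makes $hd/D_2=\Theta(\sqrt{D_1})$, the gadget factor $\exp(-|S_u|d/D_2)$ decays with $D_1$, and the resulting $\delta$ would depend on $D_1$ rather than only on $(\beta_0,\beta_1)$ as the theorem demands. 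It also violates the hypothesis $d=o(n)$ of \Cref{smallbiregular}. The paper instead takes $d=C_0\sqrt{D_2}$; then $hd/D_2=O_{\beta_0}(1)$ uniformly in $D_1$, \Cref{smallbiregular} applies, and in the polynomial-sized step the gadget rate $|S_u|d/D_2\le (200/\epsilon)\cdot C_0/\sqrt{D_2}\to 0$ makes the gadget near-lossless. Relatedly, in the polynomial-sized case \Cref{girthtoexpansion} with $d=2$ yields only $|N_{G_1}(S)|\ge(1-\gamma)|S|$, which by itself does \emph{not} force most incidences onto degree-$1$ middle vertices (nearly all neighbors could have degree $2$); the paper instead combines this vertex-expansion bound with a constant threshold $h=200/\epsilon$ and the averaging inequality $d_h/h+(2-d_h)\ge |N(S)|/|S|$ to conclude $d_h\le 1+O(\epsilon)$, and then argues via light (degree $<h$) rather than degree-$1$ middle vertices.
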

\begin{proof}
Let $\beta=\frac{\beta_0+\beta_1}{2}$, and $p_n\equiv 1\mod{4}$ be the $n$-th prime of form $4k+1$. From \Cref{pnt}, we know $\lim_{n\to+\infty}\frac{p_{\beta n}}{p_n}=\lim_{n\to+\infty}\beta(1+\frac{\ln{\beta}}{\ln{n}})=\beta$. Therefore, for all sufficiently large $n$, we know $\frac{p_{\beta n}+1}{p_n+1}\in[\beta_0,\beta_1]$. Let $p_a=p_n,p_b=p_{\beta n},D_1=p_a+1$, $D_2=p_b+1$, and $g_a,g_b$ be the primitive roots of $\mathbb{Z}^*_{p_a},\mathbb{Z}^*_{p_b}$ respectively. We can also define $p'_a$ as inverse of $p_a$ in $\mathbb{Z}^*_{p_b}$ (and similar definition for $p'_b$). Let $M=p_ap_b+p_ap'_a(3p_b+1)g_b+p_bp'_b(3p_a+1)g_a$ and $N=4p_ap_b$. It follows that $N$ and $M$ are coprime. Therefore, from \Cref{dirchlet}, there are infinitely many primes $q$ of form $kN+M,k\in\mathbb{N}$. By Chinese Remainder Theorem, it holds that $q\equiv 1\mod{4},q\equiv g_a\mod{p_a}$ and $q\equiv g_b\mod{p_b}$, which implies $q$ is a quadratic residue modulo neither $p_a$ nor $p_b$. By \Cref{lps88cons}, we can construct an infinite family of graphs $G_a=(L\cup R,E'_a),G_b=(L'\cup R',E'_b)$ such that:
\begin{compactitem}
\item[(1)] $G_a$ is a $(D_1,D_1)$-biregular graph such that $|L|=|R|=\frac{q(q^2-1)}{2}$.

\item[(2)] Girth of $G_a$ is at least $\frac{4}{3}\log_{p_a}|L|$.

\item[(3)] $G_a$ is a Ramanujan graph, where $\lambda_2(G_a)\leq 2\sqrt{p_a}$.
\end{compactitem}
The above three properties also hold similarly for $G_b$. We also have $|L|=|L'|=|R|=|R'|$.

Next, let $H_a=((L=E'_a)\cup (M=L'+R'),E_a)$ and $H_b=((M=L'+R')\cup (R=E'_b),E_b)$ denote the edge-vertex incidence graphs of $G_a$ and $G_b$ respectively. We have the following facts:
\begin{compactitem}
\item[(1)] $H_a$ is a $(2,D_1)$-biregular graph, and $H_b$ is $(D_2,2)$-biregular. $|M|=q(q^2-1)$.

\item[(2)] Both $H_a$ and $H_b$ have girth at least $\frac{8}{3}\log_{p_a}|L'|-1$

\item[(3)] By \Cref{lpsconnected} and \Cref{spectraledgevertex}, $H_a$ and $H_b$ have the second largest eigenvalues upper bounded by $\lambda_2(H_a)\leq \sqrt{2(D_1-1)^{1/2}+D_1},\lambda_2(H_b)\leq \sqrt{2(D_2-1)^{1/2}+D_2}$
\end{compactitem}
Let $C_0=10000, C=C_0^2/\beta_0, d_1=C_0\sqrt{D_2-1},d_2=\frac{d_1D_1}{D_2}$, it follows that $d_1\ge C_0\sqrt{D_2-1}$ and $d_2\ge C_0\sqrt{D_1-1}$.  Then we need to construct the gadget graph $G_0=(L_0\cup R_0,E_0)$ by \Cref{smallbiregular}, our graph has the following properties:
\begin{compactitem}
\item[(1)] $|L_0|=D_1,|R_0|=D_2$, $G_0$ is a $(d_1,d_2)$-biregular graph.

\item[(2)] For all  $S\subseteq L(H)$ where $|S|\leq \frac{CD_2}{d_1}$, we have $\frac{\mathsf{UN}_{G_0}(S)}{|S|}\ge (1-o_{D_1}(1))d_1\exp(-|S|d_1/D_2)$. A similar property also holds for right vertex sets.
\end{compactitem}
Our final graph should be tripartite product (\Cref{tripartite}) $G=(L\cup R,E)$ of $H_a,H_b$ and $G_0$. We know $G$ is a $(2d_1,2d_2)$-biregular graph, which satisfies the unbalanced condition since $\frac{d_1}{d_2}=\frac{D_2}{D_1}\in[\beta_0,\beta_1]$. Now we are ready to prove the expansion properties:
\paragraph{Two-sided Unique-neighbor Expansion for Linear-size Sets}

Setting $\epsilon_S=10^{-5}/(C_0D_1)$ be the parameter in \Cref{subgraphexpansion} and invoke it, it follows that 
there exists $\delta_1>0$ such that:

1. For any $S_L\subseteq L$ and $S_R\subseteq M$ with $|S|\leq \delta_1D_1^3|L\cup M|$, where $S=S_L\cup S_R$, let $d_L,d_R$ denote the left/right average degrees of $H_a[S]$, we have:
\begin{equation}\label{twosubgraphexpg1}
(d_L-1)(d_R-1)\leq 25\sqrt{D_1-1}
\end{equation}

2. For any $S_L\subseteq M$ and $S_R\subseteq R$ with $|S|\leq \delta_1D_1^3|M\cup R|$, where $S=S_L\cup S_R$, let $d_L,d_R$ denote the left/right average degrees of $H_b[S]$, we have:
\begin{equation}\label{twosubgraphexpg2}
(d_L-1)(d_R-1)\leq 25\sqrt{D_2-1}
\end{equation}

Without loss of generality, we just prove unique-neighbor expansion from left to right for linear-sized sets. Now fix any $S\subseteq L$ where $|S|\leq \delta_1|L|$, we want to guarantee its unique-neighbor expansion. Define $h=C_0\sqrt{D_1-1}$. We let $\mathcal{U}=N_{H_a}(S)$ denote the set of neighbors of $S$ in $H_a$ and $\mathcal{U}_h\subseteq \mathcal{U}$ denote the set of neighbors with degree at least $h$ in $H_{1,S}:=H_a[S\cup \mathcal{U}]$. Let $d_h$ denote the average degree of $S$ in $H_a[S\cup \mathcal{U}_h]$. by \Cref{twosubgraphexpg1}, we know:
\begin{equation}\label{twofewhighdegree}
d_h\leq 1+\frac{25}{C_0}
\end{equation}
We know $\frac{D_2}{d_1}\ge \frac{\sqrt{\beta D_1}}{C_0}$, it follows that $\frac{hd_1}{D_2}\leq C_0^2/\beta\leq C$.

For any $u\in \mathcal{U}_l=\mathcal{U}\backslash\mathcal{U}_h$,  let $S_u$ be the set of $u$'s neighbors in $H_{1,S}$ and thus $|S_u|\leq h$ be its degree in $H_{1,S}$. Consider the $G_0$ copy $G^u_0$ placed on $u$ as gadget graph, we define $T_u\subseteq R$ as the set of unique-neighbors of $S_u$ in $G^u_0$.  By property of $G_0$, we can lower bound $|T_u|$ as:
\begin{equation}\label{twolbgadgetinm}
|T_u|\ge (1-o_{D_1}(1))|S_u|d_1\exp(-|S_u|d_1/D_2)
\end{equation}

Consider $\delta_3=\delta_3(\beta,C_0)=\min_{1\leq t\leq h}\{(1-o_{D_1}(1))t\exp(-td_1/D_2)\}$. in fact, it follows that
\begin{align*}
\delta_3&=(1-o_{D_1}(1))\min(\exp(-d_1/D_2),h\exp(-hd_1/D_2))\\
&\ge(1-o_{D_1}(1))\min(\exp(-o_D(1)),C_0\sqrt{D_1-1}\exp(-C))\\
&\ge(1-o_{D_1}(1))
\end{align*}
By \Cref{twolbgadgetinm}, we get that:
\begin{equation}\label{twolbgadgetr}
|T_u|\ge(1-o_{D_1}(1))|S_u|d_1\exp(-|S_u|d_1/D_2)\ge \delta_3 d_1\ge(1-o_D(1))d_1
\end{equation}

Let $T=(\bigcup_{u\in\mathcal{U}_l}T_u)$ denote these vertices. It follows that for any $v\in T$, if $v$ has degree $1$ in $H'=H_b[\mathcal{U}\cup T]$, then $v$ must be a unique-neighbor of $S$ in $G$. Our goal is to establish a lower bound on the number of such vertices $v$. Similarly to the previous argument, we first need an upper bound on $|\mathcal{U}|$. The following is from \Cref{twofewhighdegree}:
\begin{equation}\label{twolbu2}
|\mathcal{U}|=|\mathcal{U}_l|+|\mathcal{U}_h|\leq |\mathcal{U}_l|+|S|d_h/h\leq |\mathcal{U}_l|+\frac{|\mathcal{U}_l|d_h}{(2-d_h)}\leq(1+\frac{1+25/C_0}{1-25/C_0})|\mathcal{U}_l|\leq 3|\mathcal{U}_l|
\end{equation}

The lower bound for left average degree $d'_L$ of $H'$ follows from \Cref{twolbu2}:

\begin{equation}\label{twolbdl}
d_L'\ge\frac{|\mathcal{U}_l|\delta_3 d_1}{|\mathcal{U}|}\ge\frac{(1-o_D(1)) d_1}{3}\ge d_1/4+1
\end{equation}

By \Cref{twosubgraphexpg2} on $H'$ and \Cref{twolbdl}, we can finally upper bound the right average degree $d'_R$ as:
\begin{equation}\label{twoubdr}
d'_R\leq 1+\frac{25\sqrt{D_2-1}}{d'_L-1}\leq 1+100/C_0
\end{equation}

It follows from \Cref{twoubdr,twofewhighdegree,twolbgadgetinm} that:
\begin{align*}
|\mathsf{UN}_G(S)|&\ge(2-d_R')|T|\ge \frac{(2-d_R')\sum_{u\in\mathcal{U}_l}|T_u|}{d'_R}\\
&\ge\frac{(1-100/C_0)(1-o_D(1))\exp(-C)d_1\sum_{u\in\mathcal{U}_l}|S_u|}{1+100/C_0}\\
&\ge0.98\exp(-C)d_1|S|(2-d_h)\\
&\ge 0.48\exp(-C)(2d_1)|S|
\end{align*}
By setting $\delta=0.48\exp(-C)$, the above lower bound tells us the desired unique-neighbor expansion:
\begin{equation*}
|\mathsf{UN}_G(S)|\ge \delta 2d_1|S|
\end{equation*}
\paragraph{Two-sided $(1/2-\epsilon)$ Unique-neighbor Expansion of Polynomial-sized Sets}
Without loss of generality, we just prove 
$(1/2-\epsilon)$ unique-neighbor expansion from left to right. Let $\delta_4>0$ denote some small enough constant to be determined. For any $S\subseteq |L|$ that $|S|\leq |L|^{\delta_4}$, our goal is to prove $|\mathsf{UN}_G(S)|\ge (1/2-\epsilon)2d_1|S|$.

Setting $d_S=2,\epsilon_S=(1/2+\epsilon/200)$ and $d'_S=(1+\epsilon/200)<2\epsilon_S$ as parameters in the statement of \Cref{girthtoexpansion} and invoke it, we can guarantee that for some $\delta_S$, if $|S|\leq \delta_S(d'_S)^{g/4-1}$, then $|N_{H_a}(S)|\ge (1-\epsilon/100)|S|$, where $g\ge \frac{8}{3}\log_{D_1}|L|-4$ is the girth of $H_a$. We can verify that $|S|$ satisfies this condition by setting $\delta_4=\delta_4(\epsilon,D_1)$ small enough:
\begin{equation*}
|S|\leq |L|^{\delta_4}\leq \delta_S(d'_S)^{1.1\delta_4\log_{d'_S}|L|}\leq \delta_S(d'_S)^{1.1\delta_4\log_{d'_S}(D_1)\log_{D_1}|L|}\leq \delta_S(d'_S)^{g/4-1}
\end{equation*}
Therefore, $|S|$ has at least $(1-\epsilon/100)|S|$ neighbors in $H_a$,

Let $h:=200/\epsilon$. We let $\mathcal{U}=N_{H_a}(S)$ denote the set of neighbors of $S$ in $H_a$ and $\mathcal{U}_h\subseteq \mathcal{U}$ denote the set of neighbors with degree at least $h$ in $H_{1,S}:=H_a[S\cup \mathcal{U}]$. Let $d_h$ denote the average degree of $S$ in $H_a[S\cup \mathcal{U}_h]$, we can bound it as follows
\begin{align}
&d_h|S|/h+(2-d_h)|S|\ge|\mathcal{U}|\ge (1-\epsilon/100)|S|\\\label{finalfewhighdegree}
\Rightarrow&d_h\leq \frac{1+\epsilon/100}{1-1/h}\leq 1+\epsilon/50
\end{align}

For any $u\in \mathcal{U}_l=\mathcal{U}\backslash\mathcal{U}_h$, let $S_u$ be the set of $u$'s neighbors in $H_{1,S}$ and $|S_u|\leq h$ is its degree in $H_{1,S}$. Consider the $G_0$ copy $G^u_0$ placed on $u$ as gadget graph, we define $T_u\subseteq R$ as the set of unique-neighbors of $S_u$ in $G^u_0$.  By property of $G_0$, we can lower bound $|T_u|$ as:
\begin{align}
|T_u|&\ge (1-o_{D_1}(1))|S_u|d_1\exp(-|S_u|d_1/D_2)\ge (1-o_{D_1}(1))\exp(-o_{D_1}(1))|S_u|d_1\\\label{finallbgadget}&\ge(1-o_{D_1}(1))|S_u|d_1
\end{align}
Let $T:=(\bigcup_{u\in\mathcal{U}_l}T_u)$ denote these vertices. It follows that for any $v\in T$, if $v$ has degree $1$ in $H':=H_b[\mathcal{U},T]$, then $v$ must be a unique-neighbor of $S$ in $G$. Let $d'_L$ be the left average degree of graph $H'$, we first lower bound $d'_L$: By \Cref{finalfewhighdegree}, we can get:
\begin{equation}\label{finallbu}
|\mathcal{U}|=|\mathcal{U}_l|+|\mathcal{U}_h|\leq |\mathcal{U}_l|+|S|d_h/h\leq |\mathcal{U}_l|+\frac{|\mathcal{U}_l|d_h}{(2-d_h)}\leq(2+\epsilon/10)|\mathcal{U}_l|
\end{equation}

By \Cref{finallbu}, we can lower bound $d'_L$ as:
\begin{equation}\label{finallbdl}
d'_L\ge\frac{|\mathcal{U}_l|d_1}{|\mathcal{U}|}\ge \frac{d_1}{(2+\epsilon/10)}
\end{equation}

 Setting $\epsilon_S=\epsilon/30,d_S=d_L', d'_S=\epsilon d_S/15>1$ as parameters in the statement of \Cref{girthtoexpansion} and invoke it, we know that: Let $\delta_S$ be some constant and $g\ge \frac{8}{3}\log_{D_1}|L|-4$ be the girth of $H_b$ (and also $H'$), if we have $|\mathcal{U}|\leq \delta_S(d'_S)^{g/4-1}$ (We call this \emph{Condition A}), then there are at least $(1-\epsilon_S)d'_L|\mathcal{U}|$ neighbors of $\mathcal{U}$ in $H'$, which implies there are at least $(1-\epsilon/15)d'_L|\mathcal{U}|$ unique-neighbors of $S$ in $G$. it follows from \Cref{finallbgadget,finalfewhighdegree} that:
 \begin{align*}
 |\mathsf{UN}_G(S)|&\ge (1-\epsilon/15)d'_L|\mathcal{U}|\ge (1-\epsilon/15)\sum_{u\mathcal{U}_l}|T_u|\\
 &\ge (1-\epsilon/15)(1-o_{D_1}(1))d_1\sum_{u\in\mathcal{U}_l}|S_u|
 \ge(1-\epsilon/10)d_1(2-d_h)|S|\\
 &\ge(1-\epsilon/10)^2d_1|S|\ge (1/2-\epsilon)2d_1|S|
 \end{align*}
 Therefore, we can finish the proof given \emph{Condition A}.
 
\emph{Condition A} can be easily verified as:
\begin{equation*}
|\mathcal{U}|\leq 2|S|\leq \delta_S|L|^{1.1\delta_4}\leq \delta_S(d'_S)^{1.1\delta_4\log_{d'_S}(D_1)\log_{D_1}|L|}\leq \delta_S(d'_S)^{g/4-1}
\end{equation*}
The above holds by setting $\delta_4=\delta_4(\epsilon,D_1)$ small enough, and $d'_S\ge \frac{\epsilon d_1}{50}\ge \Omega_{\epsilon}(\sqrt{D_1})\ge 2$ for large enough $D_1$ by \Cref{finallbdl}.

Let $\delta'=\delta'(\beta_0,\beta_1,\epsilon,D_1):=\min(\delta_1,\delta_4)$, we complete the whole proof. 
\end{proof}
\section{Random Biregular Graphs}\label{biregularsec}
In this section, we prove \Cref{smallbiregular,linearbiregular}. The method is generalized from \cite{hmmp24,randombook} to biregular graphs and all bidegrees $\omega(\log{n})\leq d\leq O(n)$.
\begin{theorem}[Chernoff Bound]\label{chernoff}
Suppose $X_1,\dots,X_n$ are i.i.d random variables sampled from $\{0,1\}$. Let $X=\sum_{i=1}^nX_i$ and $\mu:=\mathbb{E}[X]$, then for any $\delta>0$, we have:
\begin{equation*}
\mathsf{Pr}[X\leq (1-\delta)\mu]\leq \exp(-\delta^2\mu/2)
\end{equation*}
\end{theorem}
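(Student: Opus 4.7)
The plan is to apply the standard exponential moment (Chernoff) method. For any parameter $t>0$, use the identity $\{X \leq (1-\delta)\mu\} = \{e^{-tX} \geq e^{-t(1-\delta)\mu}\}$ together with Markov's inequality to obtain
\[
\mathsf{Pr}[X \leq (1-\delta)\mu] \;\leq\; e^{t(1-\delta)\mu}\,\mathbb{E}[e^{-tX}].
\]
Because the $X_i$ are independent, the moment generating function factors as $\mathbb{E}[e^{-tX}] = \prod_{i=1}^{n} \mathbb{E}[e^{-tX_i}]$. Since each $X_i$ is Bernoulli with some parameter $p_i$, and $\sum_i p_i = \mu$ by linearity of expectation, each factor equals $1 - p_i(1-e^{-t})$, which I upper-bound by $\exp(-p_i(1-e^{-t}))$ using $1+x \leq e^x$. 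Multiplying over $i$ gives $\mathbb{E}[e^{-tX}] \leq \exp(-\mu(1-e^{-t}))$.

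Next I optimize the exponent $t(1-\delta)\mu - \mu(1-e^{-t})$ in $t$. Setting the derivative to zero yields the choice $e^{-t} = 1-\delta$, which is valid (i.e.\ $t > 0$) exactly in the interesting range $\delta \in (0,1)$; for $\delta \geq 1$ the conclusion is trivial because $\{X \leq (1-\delta)\mu\} \subseteq \{X < 0\}$ has probability zero. Plugging the optimum in produces the familiar Chernoff-type estimate
\[
\mathsf{Pr}[X \leq (1-\delta)\mu] \;\leq\; \left(\frac{e^{-\delta}}{(1-\delta)^{1-\delta}}\right)^{\mu}.
\]

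The remaining step, which is the only non-mechanical part, is the scalar inequality
\[
\frac{e^{-\delta}}{(1-\delta)^{1-\delta}} \;\leq\; e^{-\delta^2/2} \qquad \text{for } \delta \in [0,1),
\]
after which raising both sides to the $\mu$-th power finishes the proof. I plan to verify this by defining $f(\delta) := -\delta - (1-\delta)\ln(1-\delta) + \delta^2/2$, noting $f(0)=0$, and computing $f'(\delta) = \delta + \ln(1-\delta)$, which is $\leq 0$ by the elementary inequality $\ln(1-\delta) \leq -\delta$. Hence $f$ is non-increasing on $[0,1)$ and remains non-positive, giving the desired bound after exponentiating. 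The main obstacle is nothing conceptual but precisely this last analytic inequality; I expect the monotonicity argument above to be cleaner than invoking a Taylor expansion, and it avoids any case split beyond the trivial $\delta \geq 1$ regime already handled.
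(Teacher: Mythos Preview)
Your proof is correct; it is the standard exponential-moment derivation of the lower-tail Chernoff bound, and the scalar inequality is verified cleanly via the monotonicity of $f(\delta)=-\delta-(1-\delta)\ln(1-\delta)+\delta^2/2$. The paper does not supply its own proof of this theorem---it is quoted as a classical tool---so there is nothing to compare against. One tiny quibble: your ``trivial'' case $\delta\ge 1$ is not quite trivial at $\delta=1$, since $\{X\le 0\}=\{X=0\}$ can have positive probability; but the main argument already covers $\delta\in(0,1)$ and extends to $\delta=1$ by continuity (noting $(1-\delta)^{1-\delta}\to 1$), or directly via $(1-p)^n\le e^{-\mu}\le e^{-\mu/2}$.
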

First, we recall the definition of Erdős–Rényi graph sampler:
\begin{definition}
For any $p\in[0,1],n_1,n_2>0$, $\mathbb{G}_{n_1,n_2,p}$ denote the distribution of random bipartite graph $G=(L\cup R,E)$, where $|L|=n_1,|R|=n_2$, and for each $(u,v)\in L\times R$, the edge $(u,v)$ is contained in $E(G)$ with probability $p$ independently.
\end{definition}
\begin{theorem}[Graph Distribution Embedding \cite{hmmp24}]\label{embedding}There exists a constant $C$ such that: 
Fix any $n_1,n_2,d_1,d_2\in \mathbb{N}$ such that $m=n_1d_1=n_2d_2$. Then for $p=(1-C(\frac{d_1d_2}{m}+\frac{\log{m}}{\min\{d_1,d_2\}})^{1/3})\frac{m}{n_1n_2}$, there is a joint distribution $\mathsf{D}=(\mathsf{G},\mathsf{H})$ such that:

1. The marginal distribution of $\mathsf{G}$ is $\mathbb{G}_{n_1,n_2,p}$, the marginal distribution of $\mathsf{H}$ is a uniformly random 
$(d_1,d_2)$-biregular bipartite graph $H=(L\cup R,E)$ where $|L|=n_1,|R|=n_2$.

2. $\mathsf{Pr}_{G,H\sim \mathsf{D}}[E(G)\subseteq E(H)]\ge 1-o(1)$.
\end{theorem}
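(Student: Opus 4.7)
I would build the coupling via the \emph{configuration model}. Attach $d_1$ labelled stubs to each left vertex and $d_2$ labelled stubs to each right vertex (so each side has $m$ stubs), sample a uniformly random perfect matching $\pi$ between the two stub sets, and let $\tilde H$ be the induced multigraph. Conditioned on $\tilde H$ being simple, $\tilde H$ is distributed uniformly over simple $(d_1,d_2)$-biregular bipartite graphs, so I set $\mathsf H=\tilde H$ on that event and resample $\mathsf H$ from the uniform biregular distribution on its complement. A standard first-moment computation bounds the expected number of double edges in $\tilde H$ by $O(d_1 d_2/m)$, so the simple event occurs with probability $1-O(d_1 d_2/m)$. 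To produce $\mathsf G$ I independently retain each edge of $\tilde H$ with probability $q:=1-C\eta^{1/3}$, where $\eta:=d_1 d_2/m+\log m/\min\{d_1,d_2\}$, and take $\mathsf G$ to be the simple bipartite subgraph on the retained pairs. By construction $\mathsf G\subseteq\tilde H$, so $E(\mathsf G)\subseteq E(\mathsf H)$ holds automatically on the simple event.

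\textbf{Matching the Erdős–Rényi marginal.} It remains to show that the resulting distribution of $\mathsf G$ is within total variation $o(1)$ of $\mathbb G_{n_1,n_2,p}$ for $p=q\cdot m/(n_1n_2)$: a maximal coupling then produces $\mathsf G^*\sim\mathbb G_{n_1,n_2,p}$ with $\mathsf G=\mathsf G^*$ with probability $1-o(1)$, which is exactly the theorem. In the configuration model, the marginal probability that a pair $(u,v)$ is an edge of $\tilde H$ equals $\tfrac{m}{n_1n_2}\bigl(1\pm O(d_1 d_2/m)\bigr)$; multiplying by the independent retention probability $q$ matches the target marginal $p$ up to the first slack term. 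To upgrade marginals to joint closeness I would reveal $\pi$ sequentially and compare pair by pair against an independent $\mathrm{Bernoulli}(p)$ process. By \Cref{chernoff} plus a union bound over the $O(n_1+n_2)$ vertices, residual-stub fractions remain within $\sqrt{\log m/\min\{d_1,d_2\}}$ of their expectations throughout the reveal, producing the second additive error $O(\log m/\min\{d_1,d_2\})$ in the per-step conditional probabilities.

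\textbf{Cube-root slack and main obstacle.} The deflation $q=1-C\eta^{1/3}$ is the price of bounding three simultaneous failure events under one union bound: (i) $\tilde H$ has a multi-edge; (ii) stub concentration fails somewhere along the reveal; (iii) the Bernoulli surrogate $\mathsf G^*$ acquires an edge that is no longer available in $\tilde H$. Each event has probability $O(\eta)$ after appropriate rescaling, and the balance of these contributions against the deflation slack — the multiplicative overhead of the coupling being roughly $\eta/q\cdot q^{-2}\cdot\eta$ — produces the $1/3$-exponent when the three are equalised. The principal technical obstacle is exactly (iii): merely matching marginals would allow $\mathsf G^*$ to overshoot $\tilde H$ locally, so the analysis must show uniformly over all reveal histories that the Bernoulli process never outruns the depleting stub structure, which is what forces both the sequential-reveal framing and the cube-root reduction in $p$.
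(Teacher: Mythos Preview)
The paper does not prove this statement; it is quoted verbatim from \cite{hmmp24} and used as a black box in \Cref{smallbiregular2,linearbiregular2}. So there is no ``paper's own proof'' to compare against, and your proposal must stand on its own.

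It does not. Your coupling hinges on the claim that the bipartite configuration model $\tilde H$ is simple with probability $1-O(d_1d_2/m)$, obtained from a first-moment bound on double edges. That first moment is miscalculated: for a fixed pair $(u,v)$ the expected number of parallel edges is $\binom{d_1}{2}\binom{d_2}{2}\cdot\Theta(1/m^2)$, and summing over the $n_1n_2=m^2/(d_1d_2)$ pairs gives $\Theta(d_1d_2)$, not $O(d_1d_2/m)$. By the usual Poisson heuristic the simplicity probability is $e^{-\Theta(d_1d_2)}$, which is exponentially small in every regime where the theorem is applied in this paper (e.g.\ $d_1,d_2=\omega(\log n)$, and in \Cref{thmlosslessunique} even $d_1,d_2=\Theta(n)$). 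Your step ``on the complement resample $\mathsf H$'' then dominates, and the containment $E(\mathsf G)\subseteq E(\mathsf H)$ is lost with probability $1-o(1)$ rather than $o(1)$.

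The known proofs of this embedding (Kim--Vu for regular graphs, and the biregular adaptation in \cite{hmmp24}) avoid global simplicity altogether. One route samples $G\sim\mathbb G_{n_1,n_2,p}$ first, shows via degree-sequence/enumeration estimates that with high probability $G$ extends to a $(d_1,d_2)$-biregular simple graph, and that a uniformly random such extension gives the correct marginal on $H$; another route compares, edge by edge, the conditional probability that a uniformly random biregular $H$ contains a given edge set, using McKay--Wormald style asymptotics. Either way the cube-root slack arises from balancing the enumeration error against the Chernoff fluctuations of the $G$-degrees, not from a three-event union bound over a configuration-model simplicity event. Your sequential-reveal sketch in the second paragraph is pointed in a workable direction, but it has to be run against a genuinely simple $H$, not against $\tilde H$ conditioned on an exponentially rare event.
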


\begin{lemma}\label{randomsample}
Let $H\sim\mathbb{G}_{n_1,n_2,p}$, where $n=\mathsf{min}\{n_1,n_2\}$ then with probability $1-O(\frac{1}{n})$, for all $t\ge 1$, we have:

1. $\forall S\subseteq R(H)$, s.t. $|S|=t$, $\frac{|\mathsf{UN}_H(S)|}{|S|}\ge p(1-p)^{t-1}n_1-\sqrt{4p(1-p)^{t-1}n_1\log{n_2}}$

2. $\forall S\subseteq L(H)$, s.t. $|S|=t$, $\frac{|\mathsf{UN}_H(S)|}{|S|}\ge p(1-p)^{t-1}n_2-\sqrt{4p(1-p)^{t-1}n_2\log{n_1}}$
\end{lemma}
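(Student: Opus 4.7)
The plan is a Chernoff bound followed by a two-step union bound. First, I would fix a single set $S\subseteq R$ with $|S|=t$ and write $|\mathsf{UN}_H(S)|=\sum_{v\in L}X_v$, where $X_v$ indicates that $v$ has exactly one neighbor in $S$. Since distinct left vertices have disjoint sets of potential edges under $\mathbb{G}_{n_1,n_2,p}$, the $X_v$'s are mutually independent Bernoulli variables, each satisfying $\mathbb{E}[X_v]=tp(1-p)^{t-1}$, so $\mu:=\mathbb{E}[|\mathsf{UN}_H(S)|]=n_1\,t\,p(1-p)^{t-1}$.

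Second, I would apply \cref{chernoff} with the deviation parameter $\delta_t=\sqrt{4\log n_2/(n_1 p(1-p)^{t-1})}$, chosen so that the resulting absolute deviation matches the square root in the lemma. A direct calculation gives $\delta_t\mu=t\sqrt{4n_1p(1-p)^{t-1}\log n_2}$ and $\delta_t^2\mu/2=2t\log n_2$, which yields
\[
\mathsf{Pr}\bigl[|\mathsf{UN}_H(S)|\le(1-\delta_t)\mu\bigr]\le \exp(-2t\log n_2)=n_2^{-2t}.
\]
Dividing the event inside by $|S|=t$ shows that $|\mathsf{UN}_H(S)|/|S|$ falling below $p(1-p)^{t-1}n_1-\sqrt{4p(1-p)^{t-1}n_1\log n_2}$ occurs with probability at most $n_2^{-2t}$. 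In the degenerate regime $\delta_t\ge 1$, the lemma's claimed lower bound is non-positive, so it holds trivially.

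Third, I would union bound across all $S\subseteq R$ of size $t$: there are at most $\binom{n_2}{t}\le n_2^{t}$ such sets, so the failure probability at size $t$ is at most $n_2^{t}\cdot n_2^{-2t}=n_2^{-t}$. Summing the geometric series over $t=1,\dots,n_2$ gives total failure probability $O(1/n_2)$ for the right-side statement. Swapping the roles of $L$ and $R$ (replacing $n_1$ with $n_2$ and $\log n_2$ with $\log n_1$) gives an identical argument for $S\subseteq L$ with failure probability $O(1/n_1)$. Taking a union of the two bad events yields overall failure probability $O(1/n)$ with $n=\min\{n_1,n_2\}$, as claimed.

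The argument is essentially bookkeeping with standard tools and there is no serious obstacle. The only things that need minor care are the algebraic choice of $\delta_t$ to reproduce exactly the stated square-root deviation after dividing by $|S|$, the geometric-series estimate $\sum_{t\ge 1}n_2^{-t}=O(1/n_2)$ that drives the union bound, and the trivial handling of the regime $\delta_t\ge 1$ where the Chernoff inequality becomes vacuous but the lemma's claim is automatic.
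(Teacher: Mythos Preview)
Your proposal is correct and follows essentially the same approach as the paper: write $|\mathsf{UN}_H(S)|$ as a sum of independent Bernoullis with mean $tp(1-p)^{t-1}n_1$, apply Chernoff with the deviation scaled to give failure probability $n_2^{-2t}$, then union bound over the at most $n_2^t$ sets of size $t$ and sum the geometric series. Your handling of the degenerate $\delta_t\ge 1$ regime and the explicit split into $O(1/n_1)+O(1/n_2)=O(1/n)$ are small clarifications the paper leaves implicit.
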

\begin{proof}
Without loss of generality, we just give the proof for $S\subset R(H)$, $|S|=t$.
\begin{equation*}
|\mathsf{UN}_H(S)|=\sum_{v\in L(H)}\mathsf{1}[v\in \mathsf{UN}_H(S)]
\end{equation*}
For each of $v\in L(H)$, the number of edges $e_v$ between $v$ and $S$ can be seen as the sum of $t$ i.i.d Bernoulli random variables with probability $p$. Let $X_v$ be the indicator function of event $e_v=1$, we know $X_v$ is a Bernoulli random variable of probability $q=tp(1-p)^{t-1}$ and $|\mathsf{UN}_H(S)|=\sum_{v\in L(H)}X_v$. These $\{X_v\}_{v\in L(H)}$ are i.i.d random variables, and $\mathbb{E}[\sum_{v\in L(H)}X_v]=qn_1$. By Chernoff Bound, we have, for all $s\ge 0$:
\begin{equation*}
\mathsf{Pr}[|\mathsf{UN}_H(S)|\leq (1-\frac{s}{\sqrt{qn_1}})qn_1]\leq \exp(-s^2/2)
\end{equation*}

Insert $s=2\sqrt{t\log{n_2}}$ to above we know that with probability at least $1-n_2^{-2t}$, we have 

\begin{equation*}
|\mathsf{UN}_H(S)|\ge tp(1-p)^{t-1}n_1-t\sqrt{4p(1-p)^{t-1}n_1\log{n_2}}
\end{equation*}

By union bound over all $t$, the failure probability is at most $\sum_{t=1}^{+\infty}n_2^tn_2^{-2t}\leq O(\frac{1}{n})$.
\end{proof}
\begin{theorem}\label{smallbiregular2}
Let $C>0,n_1=n\leq n_2\leq \poly(n),\omega(\log{n_1})\leq d_1\leq o(n_2),\omega(\log{n_2})\leq d_2\leq o(n_1)$ and $p=\frac{d_1}{n_2}=\frac{d_2}{n_1}$. For all sufficiently large $n$, there exists $(d_1,d_2)$-biregular graph with $n_1$ and $n_2$ left/right vertices such that:

1. For all $t\leq \frac{Cn_1}{d_2}$,  $\forall S\subseteq R(H)$, s.t. $|S|=t$, $\frac{|\mathsf{UN}_H(S)|}{|S|}\ge (1-o(1))d_2\exp(-pt)$

2. For all $t\leq \frac{Cn_2}{d_1}$,  $\forall S\subseteq L(H)$, s.t. $|S|=t$, $\frac{|\mathsf{UN}_H(S)|}{|S|}\ge (1-o(1))d_1\exp(-pt)$
\end{theorem}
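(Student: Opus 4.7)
The plan is to reduce the biregular graph to an Erdős--Rényi graph via the coupling of \Cref{embedding}, apply \Cref{randomsample} to obtain unique-neighbor expansion in the ER graph, and then transfer the bound to the biregular graph by accounting for the few edges of $H$ not covered by the coupled $G$. Throughout, set $p := d_1/n_2 = d_2/n_1$ and $p' = (1-\alpha)p$ where $\alpha$ is the slack factor in \Cref{embedding}. The hypotheses $d_1,d_2=\omega(\log n)$, $d_1,d_2=o(n)$, and $n_2 \leq \poly(n)$ force $\alpha = o(1)$, since $d_1 d_2/m = p = o(1)$ and $\log m / \min(d_1,d_2) = o(1)$.

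First I would apply \Cref{embedding} to obtain a joint distribution $(\mathsf{G},\mathsf{H})$ with marginals $\mathbb{G}_{n_1,n_2,p'}$ and uniform $(d_1,d_2)$-biregular respectively, with $E(G)\subseteq E(H)$ with probability $1-o(1)$. For $S\subseteq R$ of size $t\leq Cn_1/d_2$, \Cref{randomsample} applied to $G$ gives, with probability $1-o(1)$,
\[
\frac{|\mathsf{UN}_G(S)|}{|S|} \;\geq\; p'(1-p')^{t-1}n_1 \;-\; \sqrt{4p'(1-p')^{t-1}n_1\log n_2}.
\]
Since $pt\leq C$ one has $(1-p')^{t-1}=(1+o(1))e^{-pt}$ and $p'n_1=(1-\alpha)d_2$; the error term is $o(d_2 e^{-pt})$ because $d_2 e^{-pt}\geq e^{-C}\cdot \omega(\log n_2)$. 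Thus $|\mathsf{UN}_G(S)|\geq (1-o(1))\,t\,d_2 e^{-pt}$ uniformly in $S$.

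The main obstacle is that unique-neighbor counts are not monotone under adding edges: a unique-neighbor of $S$ in $G$ can acquire a second $S$-edge in $H\setminus G$ and stop being unique in $H$. However each such ``killed'' vertex absorbs at least one edge of $H\setminus G$ incident to $S$, so
\[
|\mathsf{UN}_H(S)| \;\geq\; |\mathsf{UN}_G(S)| - \bigl(|E_H(S,L)| - |E_G(S,L)|\bigr).
\]
Here $|E_H(S,L)| = td_2$ exactly by right-regularity, while $|E_G(S,L)|$ is a sum of $tn_1$ independent $\mathrm{Bernoulli}(p')$ variables with mean $(1-\alpha)td_2$. A Chernoff bound combined with a union bound over $\binom{n_2}{t}\leq (en_2/t)^t$ choices of $S$ at each $t\leq Cn_1/d_2$ yields $|E_G(S,L)|\geq (1-o(1))(1-\alpha)td_2$ for all such $S$ simultaneously, with failure probability $o(1)$; the union bound closes because $d_2=\omega(\log n_2)$ dominates $t\log(en_2/t)$. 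Consequently the extra edges from $S$ number at most $(\alpha+o(1))td_2$, and combining,
\[
\frac{|\mathsf{UN}_H(S)|}{|S|} \;\geq\; (1-o(1))d_2 e^{-pt} - (\alpha+o(1))d_2 \;=\; (1-o(1))d_2 e^{-pt},
\]
absorbing $\alpha=o(1)$ via $e^{-pt}\geq e^{-C}=\Theta(1)$. The claim for $S\subseteq L(H)$ follows by the symmetric argument, applying \Cref{randomsample} with the sides swapped and a matching Chernoff-plus-union-bound for $|E_G(S,R)|$. Since all invoked high-probability events for $\mathsf{H}$ hold simultaneously with positive probability, a biregular graph $H$ with the required expansion exists.
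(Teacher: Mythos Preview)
Your proposal is correct and follows essentially the same approach as the paper: couple the biregular graph $H$ with an Erd\H{o}s--R\'enyi graph $G$ via \Cref{embedding}, invoke \Cref{randomsample} for the unique-neighbor count in $G$, and then control the damage caused by the extra edges in $E(H)\setminus E(G)$ using $|\mathsf{UN}_H(S)| \ge |\mathsf{UN}_G(S)| - |E_{H\setminus G}(S,\cdot)|$. The only difference is how the extra-edge count is bounded: the paper applies a Chernoff bound to the degree of \emph{each individual vertex} (so $\deg_G(v)\ge (1-o(1))d_i$ for all $v$, union bound over $n_1+n_2$ vertices), which immediately gives $|E_{H\setminus G}(S,\cdot)|\le |S|\cdot o(d_i)$ for every $S$; you instead apply Chernoff to the aggregate edge count $|E_G(S,\cdot)|$ and union bound over all subsets $S$. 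Both routes close under the hypothesis $d_i=\omega(\log n)$, but the paper's per-vertex argument is slightly cleaner since it avoids the exponential-in-$t$ union bound.
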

\begin{proof}
By \Cref{embedding}, there exists a distribution $\mathsf{D}$ of $(G,H)$, whose marginal distribution on $G$ is $\mathbb{G}_{n_1,n_2,p'}$ and $H=(L\cup R,E)$ is  always some $(d_1,d_2)$ biregular bipartite graph with $|L|=n_1,|R|=n_2$. Moreover, with probability $1-o(1)$, we have $G\subseteq H$. Here $p'=(1-o(1))p$.

Fix some function $\omega((\log{n}/p'n_2)^{1/2})\leq f(n)\leq o(1)$. For every vertex $v\in L(G)$, by Chernoff Bound \Cref{chernoff} with at least $1-\exp(-f(n)^2p'n_2/2)\ge 1-n^{-\omega(1)}$ probability we have $N_G(v)\ge (1-f(n))p'n_2\ge (1-o(1))d_1$. Similarly, for every vertex $v\in R(G)$ we have $N_G(v)\ge (1-o(1))d_2$. By union bound over all $v\in L(G)\cup R(G)$ and \Cref{randomsample}, with probability at least $1-o(1)$, we have $G\subseteq H$, and for all vertices $v\in L(H)$, $N_H(v)\leq N_G(v)+o(d_1)$ (Similarly for $v\in R(H)$). At the same time, $G$ has the property in the statement of \Cref{randomsample}.

Conditioned on all above, without loss of generality, we just prove the $H$ sampled in this case has the desired expansion for all size-bounded left vertex-set: Fix any $t\leq \frac{Cn_1}{d_2}$ and any set $S\subseteq L(H)$ with $|S|=t$, we know $\frac{\mathsf{UN}_H(S)}{|S|}\ge \frac{\mathsf{UN}_G(S)}{|S|}-o(d_1)\ge p'(1-p')^{t-1}n_2-\sqrt{4p'(1-p')^{t-1}n_2\log{n_1}}-o(d_1)$. It follows that
\begin{align}
&p'(1-p')^{t-1}n_2-\sqrt{4p'(1-p')^{t-1}n_2\log{n_1}}-o(d_1)\\\label{boundbiregular}
\ge&(1-o(1))d_1(1-p')^{t-1}-\sqrt{4(1-o(1))d_1(1-p')^{t-1}\log{n_1}}-o(d_1)
\end{align}
Since $\log{n_1}\leq o(d_1)$ and $d_1(1-p')^{t-1}\ge d_1\exp(-p't/(1-p'))\ge \Omega(d_1)\ge \omega(\log{n_1})$. The above can be further bounded by:
\begin{align*}
\Cref{boundbiregular}&\ge(1-o(1))d_1(1-p')^{t-1}-o(d_1)\\
&\ge(1-o(1))d_1\exp(-pt)^{1+o(1)}-o(d_1)\\
&\ge (1-o(1))d_1\exp(-pt)\exp(-C)^{o(1)}-o(d_1)\\
&\ge (1-o(1))d_1\exp(-pt)
\end{align*}
Therefore, the above is at least $(1-o(1))d_1\exp(-pt)$.
\end{proof}
Next, we generalize the above argument to $d_1=\Omega(n)$, which is crucial in the proof of \Cref{thmlosslessunique}.
\begin{theorem}\label{linearbiregular2}
For any $\delta,C>0$, there exists a constant $\epsilon>0$ such that: Let $n_1=n\leq n_2\leq \poly(n),\omega(\log{n_1})\leq d_1\leq \epsilon n_2,\omega(\log{n_2})\leq d_2\leq \epsilon n_1$, and $p=\frac{d_1}{n_2}=\frac{d_2}{n_1}$. For all sufficiently large $n$, there exists $(d_1,d_2)$-biregular graph with $n_1$ and $n_2$ left/right vertices such that:

1. For all $t\leq \frac{Cn_1}{d_2}$,  $\forall S\subseteq R(H)$, s.t. $|S|=t$, $\frac{|\mathsf{UN}_H(S)|}{|S|}\ge (1-\delta)d_2\exp(-pt)$

2. For all $t\leq \frac{Cn_2}{d_1}$,  $\forall S\subseteq L(H)$, s.t. $|S|=t$, $\frac{|\mathsf{UN}_H(S)|}{|S|}\ge (1-\delta)d_1\exp(-pt)$
\end{theorem}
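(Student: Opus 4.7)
The plan is to mirror the proof of \Cref{smallbiregular2} closely, this time tracking each error term as a small constant depending on $\epsilon$ rather than as an $o(1)$ quantity, and choosing $\epsilon = \epsilon(\delta, C)$ small enough that these constants compose to a factor of at least $(1-\delta)$. First I would invoke \Cref{embedding} to couple $G \sim \mathbb{G}_{n_1,n_2,p'}$ with a uniformly random $(d_1,d_2)$-biregular graph $H$, so that $G \subseteq H$ holds with probability $1 - o(1)$. The key observation is that the multiplicative loss in \Cref{embedding} equals $1 - C(d_1 d_2 / m + \log m / \min\{d_1,d_2\})^{1/3}$, and since $d_1 d_2/m = p \le \epsilon$ while the logarithmic term is $o(1)$, this gives $p' \ge (1 - O(\epsilon^{1/3}))\,p$ for all sufficiently large $n$.

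Next I would apply Chernoff (\Cref{chernoff}) vertex-wise to $G$: because $d_1, d_2 = \omega(\log n)$, each vertex's degree concentrates to within a $(1 \pm \delta/100)$ factor of its mean with per-vertex failure probability $n^{-\omega(1)}$, and a union bound preserves this event across all $O(n)$ vertices. Conditioning also on the $1-o(1)$ event from \Cref{randomsample} applied to $G$, I would transfer the expansion bound from $G$ to $H$ as follows: a vertex in $\mathsf{UN}_G(S)$ leaves $\mathsf{UN}_H(S)$ only if it gains an extra neighbor in $S$ via $H \setminus G$, so $|\mathsf{UN}_H(S)| \ge |\mathsf{UN}_G(S)| - N$, where $N$ is the number of $H \setminus G$ edges incident to $S$; by the Chernoff concentration, $N \le (\delta/100)\, d_i\, |S|$.

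Finally I would convert the quantity $p'(1-p')^{t-1} n_j$ from \Cref{randomsample} into $d_i \exp(-pt)$. Expanding $\ln(1-p) = -p - p^2/2 - \cdots$ and using $pt \le C$ gives $(1-p)^{t-1} = \exp(-pt) \cdot (1 \pm O(\epsilon))$, and combining with $p' \ge (1 - O(\epsilon^{1/3}))\,p$ yields a main term of at least $(1 - O(\epsilon^{1/3}))\, d_i \exp(-pt)$. The $\sqrt{\,\cdot\, \log n_j}$ correction from \Cref{randomsample} is negligible because this main term is at least $d_i e^{-C} = \omega(\log n_j)$, and the $(\delta/100)\, d_i\, |S|$ subtraction from the previous step is of the right form to be absorbed into the overall $(1-\delta)$ factor.

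The main obstacle will be the bookkeeping: in \Cref{smallbiregular2} every error factor vanished automatically as $n \to \infty$, but here each of the embedding loss, the Chernoff deviation, the $(1-p)^{t-1}$-versus-$\exp(-pt)$ discrepancy, and the unique-neighbor transfer cost is a genuine constant of order $\epsilon^{1/3}$ or $\delta/100$. Verifying that these four multiplicative errors telescope below $\delta$ for a single well-chosen $\epsilon(\delta, C)$ is essentially the only content beyond what already appears in the proof of \Cref{smallbiregular2}.
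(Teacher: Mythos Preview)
Your proposal is correct and follows essentially the same route as the paper: couple via \Cref{embedding} to get $p'\ge(1-O(\epsilon^{1/3}))p$, use per-vertex Chernoff plus \Cref{randomsample} on $G$, transfer to $H$ by charging the at most $(\delta/100)d_i|S|$ extra edges incident to $S$, and then convert $(1-p')^{t-1}$ to $\exp(-pt)$ using $pt\le C$ and $p\le\epsilon$. The paper's proof organizes the bookkeeping through an auxiliary constant $C_2=C_2(\delta,C)$ in place of your $\delta/100$, but the structure and the ``telescope the four constant-order errors below $\delta$'' step are identical. (One cosmetic point: the union bound is over $n_1+n_2\le\poly(n)$ vertices rather than $O(n)$, but the $n^{-\omega(1)}$ per-vertex bound absorbs this.)
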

\begin{proof}
Let $C_2=C_2(\delta,C),\epsilon=\epsilon(\delta,C,C_2)>0$ be some large/small enough constants to be determined later. By \Cref{embedding}, there exists a distribution $\mathsf{D}$ of $(G,H)$, whose marginal distribution on $G$ is $\mathbb{G}_{n_1,n_2,p'}$, and $H=(L\cup R,E)$ is  always some $(d_1,d_2)$ biregular bipartite graph with $|L|=n_1,|R|=n_2$. Moreover, with probability $1-o(1)$, we have $G\subseteq H$. Here $p'\ge (1-C_1\epsilon^{1/3})p$ and $C_1$ is some universal constant. By setting $\epsilon= \epsilon(\delta,C,C_2)$ small enough, we can make $p'\ge (1-\delta/(2C_2))p$.

Fix some function $\omega((\log{n}/p'n_2)^{1/2})\leq f(n)\leq o(1)$. For every vertex $v\in L(G)$. By Chernoff Bound \Cref{chernoff}, with at least $1-\exp(-f(n)^2p'n_2/2)\ge 1-n^{-\omega(1)}$ probability we have $N_G(v)\ge (1-f(n))p'n_2\ge (1-\delta/C_2)d_1$. Similarly, for every vertex $v\in R(G)$ we have $N_G(v)\ge (1-\delta/C_2)d_2$. By union bound over all $v\in L(G)\cup R(G)$ and \Cref{randomsample}, with probability at least $1-o(1)$, we have $G\subseteq H$, and for all vertices $v\in L(H)$, $N_H(v)\leq N_G(v)+\delta d_1/C_2$ (Similarly for $v\in R(H)$). At the same time, $G$ has the property in the statement of  \Cref{randomsample}.

Conditioned on all above, without loss of generality, we just prove the $H$ sampled in this case has the desired expansion for all size-bounded left vertex-set: Fix any $t\leq \frac{Cn_1}{d_2}$ and any set $S\subseteq L(H)$ with $|S|=t$, we know $\frac{\mathsf{UN}_H(S)}{|S|}\ge \frac{\mathsf{UN}_G(S)}{|S|}-\delta d_1/C_2\ge p'(1-p')^{t-1}n_2-\sqrt{4p'(1-p')^{t-1}n_2\log{n_1}}-\delta d_1/C_2$.
\begin{align}
&p'(1-p')^{t-1}n_2-\sqrt{4p'(1-p')^{t-1}n_2\log{n_1}}-\delta d_1/C_2\\\label{boundbiregular2}
\ge&(1-\delta/(2C_2))d_1(1-p')^{t-1}-\sqrt{4(1-\delta/(2C_2))d_1(1-p')^{t-1}\log{n_1}}-\delta d_1/C_2
\end{align}
Since $d_1(1-p')^{t-1}\ge d_1\exp(-p't/(1-p'))\ge \Omega(d_1)\ge \omega(\log{n_1})$, the above can be further bounded by:
\begin{align*}
\Cref{boundbiregular2}&\ge(1-o(1))(1-\delta/(2C_2))d_1(1-p')^{t-1}-\delta d_1/C_2\\
&\ge(1-\delta/C_2)d_1\exp(-p't/(1-p'))-\delta d_1/C_2\\
&\ge (1-\delta/C_2)d_1\exp(-pt)^{(1/(1-p))}-\delta d_1/C_2\\
&\ge (1-\delta/C_2)d_1\exp(-pt)\exp(-C)^{2p}-\delta d_1/C_2\\
&\ge(1-\delta/1000)d_1\exp(-pt)\exp(-C)^{2\epsilon}-\delta\exp(-C)d_1/1000   \ \text{(}C_2\ \text{large enough)}\\
&\ge(1-\delta/1000)^2d_1\exp(-pt)-\delta\exp(-C)d_1/1000\ \ \  \ \ \ \ \ \ \ \ \ \ \ \ \ \text{(}\epsilon\ \text{small enough)}\\
&\ge((1-\delta/1000)^2-\delta/1000)d_1\exp(-pt)\ \ \ \ \ \ \ \ \ \ \ \  \ \ \ \ \ \ \ \ \text{(}\exp(-pt)\ge \exp(-C)\text{)}\\
&\ge(1-\delta)d_1\exp(-pt)
\end{align*}
Therefore, the above is at least $(1-\delta)d_1\exp(-pt)$.
\end{proof}

\section*{Acknowledgements}
We thank Mahdi Cheraghchi for numerous helpful discussions including introducing \cite{lpsbook} to us, and his comments on an early version of this paper. We also thank Nikhil Shagrithaya, Yaowei Long and Thatchaphol Saranurak for insightful discussions. Besides, we thank the anonymous SODA reviewers for their detailed feedback.

The author was partially supported by the National Science Foundation under Grants No.\ CCF-2107345 and CCF-2236931.
{\small \bibliography{main}}
\appendix
\section{Subgraph Density of Biregular Graph with Bounded Spectrum}\label{spectralsec}
Our goal in this section is to prove \Cref{subgraphexpansion}. We will basically follow the proof in \cite{hmmp24}, while there are two differences: 1. Our lemma statement of \Cref{leavesbound} is different and in some sense stronger than that in the original proof. It remedies a flaw in the original paper \cite{hmmp24}. Concretely, we think the original statement of Lemma 5.7 in \cite{hmmp24} is actually not enough to derive the 4-th equation on Page21 in \cite{hmmp24}. We fix this by slightly strengthening Lemma 5.7 in \cite{hmmp24} to \Cref{leavesbound} in our paper (You can compare their statements), and complete the proof. 2. We slightly modify the bidegree requirement of \Cref{subgraphexpansion} to $2\leq c\leq d$ and $cd>6$, which is necessary for the proof of \Cref{thmtwoside}.

In this section, for any function $f\colon A\to \mathbb{R}$, we define its norm by $\Vert f\Vert_2^2=\sum_{x\in A}f(x)^2$
\begin{definition}[Biregular Tree Extension]
Fix $c,d\ge 2$ and even $\ell>1$. Given a bipartite graph $G=(L\cup R,E)$, where each left vertex has degree less $c$ and each right vertex has degree less than $d$. The $(c,d)$-biregular tree extension $T$ of $G$ is defined as follows: For each left vertex $u\in L$, attach a tree $T_u$ rooted at $u$. $T_u$ should have $\ell$ layers from counting from $0$. The $0$-th layer only contains $u$ and has $c-\dgr_G(u)$ children. Each of other vertices in odd layers  has $d$ children, and each of other vertices in even layers has $c$ children except for leaves. The similar rule also applies to each right vertex in $R$. 
\end{definition}
For any function $f\colon V(G)\to \mathbb{R}$ defined on $V(G)=L\cup R$ and its biregular tree extension $T$, given a parameter $t\in \mathbb{R}$, we can define $f_t\colon V(T)\to\mathbb{R}$ as extension of $f$ on $T$ as:
\begin{equation}
f_t(x)=f(r)t^{\mathsf{depth}(x)},\ \ \forall r\in V(G), x\in T_r
\end{equation}
Next we define Bethe-Hessian matrix of $G$, which provide a bridge between positive definiteness and subgraph denstity:
\begin{definition}
For an undirected graph $G=(V,E)$, and a parameter $t$, the Bethe-Hessian matrix is defined as $H_G(t)=(D_G-I)t^2-A_Gt+I$, where $D_G$ is the diagonal degree matrix of $G$ and $A_G$ is its adjacency matrix. 
\end{definition}
For small enough vertex set $S\subseteq V(G)$, we would like to prove positive definiteness of $H_{G[S]}(t)$. We reduce it to $H_T(t)$ by the following lemma:
\begin{lemma}\label{leafbound}
Let $G=(L\cup R,E)$ be a bipartite graph and $T$ be a $(c,d)$-biregular tree extension of it. For any $t\in\mathbb{R}$ and function $f\colon V(G)\to\mathbb{R}$, its extension $f_t\colon V(T)\to\mathbb{R}$ satisfies:
\begin{align*}
(H_T(t)\cdot f_t)(x)=
\begin{cases}
(H_G(t)\cdot f)(x)&x\in V(G)\\
0&v\notin V(G)
\end{cases}
\end{align*}
\end{lemma}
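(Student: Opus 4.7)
The plan is to verify the identity by direct computation at each $x \in V(T)$, splitting into three cases according to where $x$ lies: (i) $x \in V(G)$; (ii) $x$ is an interior non-root vertex of some attached tree $T_r$; and (iii) $x$ is a leaf of $T$. The starting point in each case is the expansion
\[
(H_T(t)\cdot f_t)(x) \;=\; (\dgr_T(x)-1)\, t^2 f_t(x) \;-\; t \sum_{y \sim_T x} f_t(y) \;+\; f_t(x),
\]
together with the definition $f_t(y) = f(r)\, t^{\mathsf{depth}(y)}$ whenever $y \in V(T_r)$.

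For case (i), take $x \in L$ (the case $x \in R$ is symmetric). The biregular tree extension is built so that $\dgr_T(x) = c$, with $x$'s $T$-neighbors partitioned into $\dgr_G(x)$ $G$-neighbors in $R$ (where $f_t$ agrees with $f$) and $c - \dgr_G(x)$ depth-$1$ children inside $T_x$ (where $f_t$ takes the value $t f(x)$). Substituting, the contribution of the extension children is $-t \cdot (c - \dgr_G(x)) \cdot t f(x) = -(c - \dgr_G(x))\, t^2 f(x)$, which combines with $(c-1)\, t^2 f(x)$ to leave exactly $(\dgr_G(x) - 1) t^2 f(x)$. What remains is $(\dgr_G(x) - 1) t^2 f(x) - t \sum_{y \in N_G(x)} f(y) + f(x) = (H_G(t) \cdot f)(x)$, as claimed.

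For case (ii), suppose $x$ has depth $k \in \{1, \ldots, \ell-1\}$ inside some $T_r$. It has a unique parent at depth $k-1$ and $\dgr_T(x) - 1$ children at depth $k+1$, on which $f_t$ evaluates to $f(r) t^{k-1}$ and $f(r) t^{k+1}$ respectively. Then
\[
(H_T(t) \cdot f_t)(x) \;=\; (\dgr_T(x) - 1) t^2 f(r) t^k \;-\; t\bigl[f(r) t^{k-1} + (\dgr_T(x) - 1) f(r) t^{k+1}\bigr] \;+\; f(r) t^k \;=\; 0,
\]
where the two terms carrying the factor $\dgr_T(x)-1$ cancel, and the parent contribution $-t f(r) t^{k-1}$ cancels with the trailing $f(r) t^k$. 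For case (iii), a depth-$\ell$ leaf has $\dgr_T(x) = 1$ and only its parent as neighbor, so the $(\dgr_T(x)-1)t^2 f_t(x)$ term vanishes and $-t \cdot f(r) t^{\ell-1} + f(r) t^\ell = 0$ closes the identity.

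I anticipate no genuine obstacle: the exponential form $f_t(y) = f(r)\, t^{\mathsf{depth}(y)}$ is engineered precisely so that, along every tree edge, the Bethe--Hessian recurrence telescopes to zero. The only point that requires a moment of care is the accounting at a $V(G)$-vertex, where the ``missing degree'' $c - \dgr_G(x)$ filled in by the extension is exactly what is needed for the term $(c-1) t^2 f(x)$ to collapse to $(\dgr_G(x) - 1)t^2 f(x)$; this is the structural reason for defining the root to have $c - \dgr_G(u)$ children in $T_u$ in the first place.
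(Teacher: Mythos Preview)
Your proof is correct and follows essentially the same approach as the paper: a direct case-by-case verification of the Bethe--Hessian action, using that at a root the $c-\dgr_G(x)$ tree children contribute exactly the term needed to reduce $(\dgr_T(x)-1)t^2$ to $(\dgr_G(x)-1)t^2$, and that at non-root tree vertices the geometric scaling of $f_t$ forces cancellation. The only cosmetic difference is that the paper treats interior tree vertices and leaves in a single stroke (since the formula $((\dgr_T(x)-1)t^2+1)t^k f(r)-t^k f(r)-(\dgr_T(x)-1)t^{k+2}f(r)=0$ remains valid when $\dgr_T(x)-1=0$), whereas you split them into two cases.
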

\begin{proof}
When $x\in V(G)$, we have $\dgr_G(x)$ neighbors in the original graph and $\dgr_T(x)-\dgr_G(x)$ children with value $f(x)t$, it follows that:
\begin{align*}
(H_T(t)\cdot f_t)(x)&=((\dgr_T(x)-1)t^2+1)f(x)-(\dgr_T(x)-\dgr_G(x))t^2f(x)-t(A_G\cdot f)(x)\\
&=((\dgr_G(x)-1)t^2-1)f(x)-t(A_G\cdot f)(x)\\
&=(H_G(t)\cdot f)(x)
\end{align*}
On the other hand, when $x\notin V(G)$, let $x\in T_r, d=\mathsf{depth(x)}$, we know $x$ has one parent neighbor on the tree with value $f(r)t^{d-1}$, and $\dgr_T(x)-1$ children on the tree with value $f(r)t^{d+1}$. It follows that:
\begin{equation*}
(H_T(t)\cdot f_t)(x)=((\dgr_T(x)-1)t^2+1)t^df(r)-t^df(r)-(\dgr_T(x)-1)t^{d+2}f(r)=0
\end{equation*}
\end{proof}
Next, we will define a "patched" version of degree matrix for biregular tree extension $T$ to make it "real biregular". We can observe that all vertices in a $(c,d)$-biregular tree extension $T$ have degrees $c$ or $d$ except for leaves. Therefore, we can manually "correct" these leave degrees that matches a infinite $(c,d)$-biregular tree. Fomally, $D'_T$ is modified from $D_T$ that for each left vertex $r\in L$, all leaves $x$ on tree $T_r$ have "corrected degree" $D'_T(x,x)=c$, and for each right vertex $r\in R$, all leaves $x$ on tree $T_r$ have "corrected degree" $D'_T(x,x)=d$. Note that all leaves are on even layers so this correction makes sense.

To facilitate future analysis, we need to bound $\langle f_t^{\ell},(D'_T-I)f_t^{\ell}\rangle$ compared to $\langle f_t,(D'_T-I)f_t\rangle$ in the following lemma. Here $f_t^{\ell}$ denote $f_t$ restricted to leaves of $T$, where values on leaves remain unchanged but all other vertices have values $0$.
\begin{lemma}\label{leavesbound}
Let $G=(L\cup R,E)$ be a bipartite graph where left(right) vertices have degree at most $c(d)$, and $T$ is its even depth-$\ell$ $(c,d)$-biregular tree extension. For any $f\colon V(G)\to \mathbb{R}$ and its extension $f_t\colon V(T)\to \mathbb{R}$ Let $f^{\ell}_t$ denote $f_t$ restricted to leaves of $T$. Suppose $t^2\sqrt{(c-1)(d-1)}\leq 1-\delta$ for some $\delta\in(0,1)$. Then:
\begin{equation*}
\frac{\langle f_t^{\ell},(D'_T-I)f_t^{\ell}\rangle}{\langle f_t,(D'_T-I)f_t\rangle}\leq \frac{2\delta}{e^{\delta\ell}-1}
\end{equation*}
\end{lemma}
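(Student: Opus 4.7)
Our plan is to decompose both inner products over the pendant trees $\{T_r\}_{r \in V(G)}$ of $T$, reducing the claim to a per-tree bound. Since $V(T)$ splits as the disjoint union of the $V(T_r)$'s (with each root $r$ placed into its own pendant tree), both inner products split additively as $\langle f_t, (D'_T-I) f_t \rangle = \sum_r C_r$ and $\langle f_t^\ell, (D'_T-I) f_t^\ell \rangle = \sum_r C_r^{\mathsf{leaf}}$ for per-tree contributions $C_r$ and $C_r^{\mathsf{leaf}}$. Because both sides are nonnegative, by the mediant inequality it suffices to prove $C_r^{\mathsf{leaf}}/C_r \leq \frac{2\delta}{e^{\delta\ell}-1}$ for every individual $r$.

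Fix $r$ and set $s := t^2\sqrt{(c-1)(d-1)}$, so the hypothesis gives $s \leq 1-\delta$. Since $f_t(x) = f(r)\, t^{\mathsf{depth}(x)}$ on $T_r$, each contribution factors as $f(r)^2$ times a purely structural quantity. Counting vertices at each depth of the biregular tree (internal vertices at even/odd layers have $c-1$/$d-1$ children), a direct calculation shows that the even-depth-$2j$ vertices with $1 \leq j \leq L := \ell/2$ (including the leaves at $j=L$) contribute $\kappa\, s^{2j}\, f(r)^2$ in total; the odd-depth-$(2j+1)$ vertices contribute $\kappa\, s^{2j+1}$ times a positive side-dependent constant times $f(r)^2$; and the root itself contributes $(D-1)\, f(r)^2$, where $D \in \{c,d\}$ is the degree of its side and $\kappa := D - \dgr_G(r) \in [0,D]$ is its number of tree-children. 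In particular $C_r^{\mathsf{leaf}} = \kappa\, s^\ell\, f(r)^2$. The ratio $C_r^{\mathsf{leaf}}/C_r$ is monotone increasing in $\kappa$, so we restrict to the worst case $\kappa = D$. Dropping the nonnegative odd-depth contributions from $C_r$ and summing $\sum_{j=1}^L s^{2j} = s^2(1-s^\ell)/(1-s^2)$ yields the closed-form bound
\[
\frac{C_r^{\mathsf{leaf}}}{C_r} \;\leq\; \frac{D\, s^\ell(1-s^2)}{(D-1)(1-s^2) + D\, s^2(1-s^\ell)}.
\]

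Since $D \geq 2$ gives $(D-1)/D \geq 1/2$, the remaining task reduces to the one-variable inequality $s^\ell(1-s^2)(e^{\delta\ell}-1) \leq 2\delta\bigl[\tfrac12(1-s^2) + s^2(1-s^\ell)\bigr]$ for all $s \in (0, 1-\delta]$. I would verify it by showing the normalized ratio of the two sides is maximized at $s = 1-\delta$ and then invoking the standard estimates $(1-\delta)^\ell \leq e^{-\delta\ell}$ and $1 - e^{-x} \leq x$ to close the gap. The main obstacle is precisely this analytic step: the target bound is near-tight in the regime $\delta \ll 1$ with $\delta\ell \to \infty$, where both sides behave like $2\delta e^{-\delta\ell}$, so care is needed to avoid losing the constant factor (bounds as coarse as $s^\ell \leq e^{-\delta\ell}$ alone do not suffice, since one then loses slack comparable to the entire numerator). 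The decomposition and depth-by-depth counting, by contrast, are routine consequences of the tree structure and the definition of $f_t$.
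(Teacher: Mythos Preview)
Your approach is essentially the paper's: decompose over the trees $T_r$, compute the even-layer contributions as a geometric progression in $s = t^2\sqrt{(c-1)(d-1)}$, and bound the leaf-to-total ratio. The paper's argument differs from yours in two small but consequential ways.

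First, the paper does not separate the root term. It writes the root contribution as the $j=0$ term of the same series with coefficient $\kappa=\dgr_{T_r}(r)$, tacitly using $(D-1)\geq \kappa$, i.e.\ $\dgr_G(r)\geq 1$. Then $\kappa$ cancels from numerator and denominator and the per-tree ratio is simply
\[
\frac{s^\ell}{\sum_{j=0}^{\ell/2} s^{2j}} \;=\; \frac{s^\ell(1-s^2)}{1-s^{\ell+2}},
\]
with no $D$ in sight. Your worst-case choice $\kappa = D$ and the ensuing $(D-1)/D\geq 1/2$ step are what make your closed form awkward; your treatment is arguably more careful about isolated vertices in $G$, but it costs you the clean expression.

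Second---and this is what dissolves the ``obstacle'' you flag---the paper does not try to prove the bound directly for the hypothesis value $\delta$. It first sets $\delta' := 1-s \geq \delta$ and shows
\[
\frac{s^\ell(1-s^2)}{1-s^{\ell+2}} \;\leq\; \frac{2\delta'}{e^{\delta'\ell}-1},
\]
which follows immediately from the three one-line estimates $1-s^2\leq 2\delta'$, $s^\ell\leq e^{-\delta'\ell}$, and $1-s^{\ell+2}\geq 1-s^\ell\geq 1-e^{-\delta'\ell}$. It then observes that $x\mapsto \frac{2x}{e^{x\ell}-1}$ is nonincreasing on $(0,1)$, so one may replace $\delta'$ by the smaller $\delta$ at the end. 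Working with the exact $\delta'$ rather than the slack $\delta$ is precisely what keeps the near-tightness you worry about from biting: at $s=1-\delta'$ each of the three estimates is sharp to first order, so no constant is lost.
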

\begin{proof}
For each $v\in V(G)$, it suffices to show the desired inequality on the single tree $T_r$. Without loss of generality, we can assume $v\in L$.

For any even $k\leq \ell$, let $f^k_t[r]$ denote the function restricted on vertices in the $k$-th layer of $T_r$. It's not hard to show that the number $n_k$ of vertices in $k$-th layer of $T_r$ can be counted by: 
\begin{equation*}
n_k=\frac{\dgr_{T_r}(r)}{(c-1)}((c-1)(d-1))^{k/2}
\end{equation*}
It follows that:
\begin{equation*}
\langle f^k_t[r],(D'_T-I)f^k_t[r]\rangle=\dgr_{T_r}(r)f(r)^2t^{2k}((c-1)(d-1))^{k/2}=\dgr_{T_r}(r)f(r)^2(1-\delta')^k
\end{equation*}
Here we define $\delta\leq \delta'=1-t^2\sqrt{(c-1)(d-1)}$. It follows that:
\begin{align*}
\frac{\langle f^{\ell}_t[r],(D'_T-I)f^{\ell}_t[r]\rangle}{\langle f_t[r],(D'_T-I)f_t[r]\rangle}=\frac{(1-\delta')^{\ell}}{\sum_{k=0}^{\ell/2}(1-\delta')^{2k}}=\frac{(1-\delta')^{\ell}(1-(1-\delta')^2)}{1-(1-\delta')^{\ell+2}}\leq \frac{2\delta'}{e^{\delta'\ell}-1}
\end{align*}
The last inequality comes from $1-x\leq e^{-x}$. By averaging argument over each $r\in V(G)$, we get:
\begin{equation*}
\frac{\langle f_t^{\ell},(D'_T-I)f_t^{\ell}\rangle}{\langle f_t,(D'_T-I)f_t\rangle}\leq \frac{2\delta'}{e^{\delta'\ell}-1}
\end{equation*}
Since the function $g(x)=\frac{2x}{e^{x\ell}-1}$ is non-increasing and $\delta\leq \delta'$, we know the above inequality still holds when we replace $\delta'$ by $\delta$.
\end{proof}
Next, we will introduce the concept of folding a tree extension into a biregular graph.
\begin{definition}
Let $G=(L\cup R,E)$ be a $(c,d)$-biregular bipartite graph, and $S\subseteq V(G)$ be a vertex set of $G$. Let $T$ be a depth-$\ell$ $(c,d)$-biregular tree extension of $G[S]$, we define a mapping $\sigma\colon V(T)\to V(G)$ iteratively as follows:

For each $v\in V(G)$ we give an order $[1,\dgr_G(v)]$ to edges incident to it. 
1. For any $v\in S$, define $\sigma(v)=v$.

2. We define $\sigma(v)$ for other vertices according to the depth order from above to bottom. For each $v\in V(T)\backslash S$, let $u$ be its parent, and $v$ is $u$'s $i$-th child. By induction we've defined $\sigma(u)=u'$, then we define $\sigma(v)=v_i$, where $(u',v_i)$ is the $i$-th \emph{untouched} edge incident to $u'$ in the order defined. Here \emph{untouched} edges mean the set of edges $(u',v')$ such that 1. Not in $E(G[S])$ if $u\in S$ 2. $v'\neq \sigma(w)$ if $u\notin S$ and $w$ is $u$'s parent. 
\end{definition}
It's intuitive that this mapping preserves the "structure" of $T$ on $G$, that no two incident edges of $T$ are mapped to the same edge in $G$. We then define a corresponding folded function with the help of $\sigma$.
\begin{definition}
Fix a mapping $\sigma\colon V(T)\to V(G)$. Given any function $f\colon V(T)\to \mathbb{R}$, and any $v\in V(G)$, we define function $f^v\colon V(T)\to \mathbb{R}$ as:
\begin{align*}
f^v(x)=
\begin{cases}
f(x)&\sigma(x)=v\\
0&\sigma(x)\neq v
\end{cases}
\end{align*}
The folded function $\widetilde{f}\colon V(G)\to\mathbb{R}$ is defined as: $\widetilde{f}(v)=\Vert f^v\Vert_2,\forall v\in V(G)$. 
\end{definition}
\begin{observation}\label{obfold}
Since the folded function preserves degrees, we know that $\langle f,D'_Tf\rangle=\langle \widetilde{f},D_G\widetilde{f}\rangle$. Moreover, $\Vert f\Vert_2=\Vert \widetilde{f}\Vert_2$
\end{observation}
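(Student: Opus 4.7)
\textbf{Plan for the proof of Observation \ref{obfold}.} Both equalities follow by unrolling the definitions once we establish the key structural fact that $\sigma$ preserves the bipartite side of a vertex, and that the corrected degree matrix $D'_T$ is constant on the preimages of $\sigma$ with value equal to the $G$-degree of the target. I will first record that, because $G$ is $(c,d)$-biregular and the tree extension is built layer by layer alternating between $L$-type and $R$-type layers, every vertex $x$ at an even layer of some $T_r$ lies on the same side as $r$, and every vertex at an odd layer lies on the opposite side. The definition of $\sigma$ follows edges of $G$ step by step, so $\sigma(x)$ is on the same bipartite side as $x$; in particular $\dgr_G(\sigma(x)) = c$ when $x$ is on the $L$-side and $d$ when $x$ is on the $R$-side. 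By construction of $D'_T$ (which sets every leaf to its side's ``correct'' degree $c$ or $d$, while all internal vertices automatically have tree-degree $c$ or $d$), we obtain
\begin{equation*}
D'_T(x,x) \;=\; \dgr_G(\sigma(x)) \qquad \text{for every } x \in V(T).
\end{equation*}

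Next I will verify that $\sigma$ is well-defined on all of $V(T)$ and that the preimages $\sigma^{-1}(v)$ partition $V(T)$. This is immediate from the inductive construction, which assigns exactly one image to each vertex of $T$. With this in hand, the norm identity is a one-line computation:
\begin{equation*}
\|f\|_2^2 \;=\; \sum_{x \in V(T)} f(x)^2 \;=\; \sum_{v \in V(G)} \sum_{x \in \sigma^{-1}(v)} f(x)^2 \;=\; \sum_{v \in V(G)} \|f^v\|_2^2 \;=\; \sum_{v \in V(G)} \widetilde{f}(v)^2 \;=\; \|\widetilde{f}\|_2^2.
\end{equation*}

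For the quadratic form identity, I will combine the diagonal identity $D'_T(x,x) = \dgr_G(\sigma(x))$ with the same partition of $V(T)$:
\begin{equation*}
\langle f, D'_T f \rangle \;=\; \sum_{x \in V(T)} D'_T(x,x)\, f(x)^2 \;=\; \sum_{v \in V(G)} \dgr_G(v) \sum_{x \in \sigma^{-1}(v)} f(x)^2 \;=\; \sum_{v \in V(G)} \dgr_G(v)\, \widetilde{f}(v)^2 \;=\; \langle \widetilde{f}, D_G \widetilde{f} \rangle.
\end{equation*}

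There is no real obstacle here; the only point requiring mild care is the bookkeeping that $D'_T$ is designed precisely so that $D'_T(x,x)$ depends only on the $G$-side of $\sigma(x)$, including for leaves (which are the only vertices where the uncorrected tree degree would disagree with $\dgr_G(\sigma(x))$). Once that is stated explicitly, both identities reduce to the partition $V(T) = \bigsqcup_{v \in V(G)} \sigma^{-1}(v)$ and the definition $\widetilde{f}(v)^2 = \sum_{x \in \sigma^{-1}(v)} f(x)^2$.
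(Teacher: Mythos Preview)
Your proposal is correct and is precisely the natural elaboration of what the paper leaves implicit: the paper states this as an Observation with no proof beyond the phrase ``since the folded function preserves degrees,'' and your argument unpacks exactly that phrase via the partition $V(T)=\bigsqcup_{v}\sigma^{-1}(v)$ together with the key identity $D'_T(x,x)=\dgr_G(\sigma(x))$. There is no alternative route here to compare against.
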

\begin{lemma}\label{adjacencybound}
Use the same notations as above, and let $A_T$ and $A_G$ denote the adjacency matrices of $T$ and $G$ respectively. We have:
\begin{equation*}
\langle f,A_Tf\rangle\leq \langle \widetilde{f},A_G\widetilde{f}\rangle
\end{equation*}
\end{lemma}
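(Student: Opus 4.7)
The plan is to expand both quadratic forms as sums over edges and then use Cauchy--Schwarz on each fiber of the folding map. Concretely, I will write
\[
\langle f, A_T f\rangle \;=\; 2\sum_{\{x,y\}\in E(T)} f(x)f(y),\qquad \langle \widetilde f, A_G \widetilde f\rangle \;=\; 2\sum_{\{u,v\}\in E(G)} \widetilde f(u)\widetilde f(v),
\]
and then partition $E(T)$ according to the image $G$-edge. For each $e=\{u,v\}\in E(G)$, let $E_e \subseteq E(T)$ be the set of tree edges whose endpoints map under $\sigma$ to $u$ and $v$. Since $G$ and $T$ respect the bipartition $L\cup R$ and $\sigma$ preserves it, every $\{x,y\}\in E_e$ can be oriented consistently with $\sigma(x)=u$, $\sigma(y)=v$.

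The next step is the key structural observation: for any single tree vertex $x$, the edges of $T$ incident to $x$ are mapped by $\sigma$ to pairwise distinct edges of $G$ incident to $\sigma(x)$. This follows directly from the ``untouched edge'' clause in the definition of $\sigma$ (the $i$-th child uses the $i$-th still-unused edge at $\sigma(x)$, so no two children reuse the same $G$-edge, and by induction the parent edge is also distinct). Consequently, if we set $X_e=\{x:\sigma(x)=u,\ \exists y\ \{x,y\}\in E_e\}$ and $Y_e$ analogously on the $v$-side, the edges in $E_e$ define a bijection $\phi:X_e\to Y_e$: each $x\in X_e$ is paired with exactly one neighbor in $Y_e$ because only one of its incident tree edges can map to $e$.

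Given this matching structure, Cauchy--Schwarz on the fiber yields
\[
\sum_{\{x,y\}\in E_e} f(x)f(y) \;=\; \sum_{x\in X_e} f(x)\,f(\phi(x)) \;\le\; \sqrt{\sum_{x\in X_e}f(x)^2}\,\sqrt{\sum_{y\in Y_e}f(y)^2} \;\le\; \widetilde f(u)\,\widetilde f(v),
\]
where the last step uses $X_e\subseteq\{x:\sigma(x)=u\}$ and $Y_e\subseteq\{y:\sigma(y)=v\}$ together with the definition $\widetilde f(w)=\|f^w\|_2$. Summing over all $e\in E(G)$ and observing that $\{E_e\}_{e\in E(G)}$ partitions $E(T)$ completes the inequality $\langle f,A_Tf\rangle\le \langle \widetilde f,A_G\widetilde f\rangle$.

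The main subtlety (and the only real content beyond Cauchy--Schwarz) is verifying that $E_e$ induces a matching between $X_e$ and $Y_e$; I would be careful in the writeup to separate two cases when justifying this---edges from a tree vertex to its children (handled by the child-indexing scheme) and the edge from a tree vertex to its parent (whose image is fixed once $\sigma$ of the parent is chosen, and which is declared ``touched'' for the child's own expansion). Once that local injectivity at each tree vertex is nailed down, the bijection $\phi$ is well-defined and the rest is routine.
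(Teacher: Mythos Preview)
Your proposal is correct and follows essentially the same approach as the paper: both expand the quadratic forms as edge sums, partition $E(T)$ by the image $G$-edge, and apply Cauchy--Schwarz on each fiber using the key property that $\sigma$ sends incident $T$-edges to distinct $G$-edges. Your write-up is more explicit about the induced matching $\phi:X_e\to Y_e$ (the paper simply asserts the property of $\sigma$ and applies Cauchy--Schwarz directly to get $\sum f(x)f(y)\le \|f^u\|_2\|f^v\|_2$), but the argument is the same.
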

\begin{proof}
It simply follows from the calculation:
\begin{align*}
\langle f,A_Tf\rangle&=\sum_{(u,v)\in E(T)}f(u)f(v)=\sum_{(u,v)\in E(G)}\sum_{\begin{subarray}\sigma(x)=u,\sigma(y)=v\\v(x,y)\in E(T)\end{subarray}}f(x)f(y)\\&\leq \sum_{(u,v)\in E(G)}\Vert f^u\Vert_2\Vert f^v\Vert_2=\langle\widetilde{f},A_G\widetilde{f}\rangle
\end{align*}
The inequality follows from Cauchy-Schwartz inequality and the property of $\sigma$ that it doesn't map incident edges in $E(T)$ to the same edge in $E(G)$.
\end{proof}
Now we are ready to show positive definiteness of $H_{G[S]}(t)$ for specific parameter ranges, which will lead to our subgraph density bound:
\begin{theorem}\label{spectralthm}
Let $\epsilon\in(0,0.1)$ and $2\leq c\leq d$ be integers where $cd>6$. Let $G=(L\cup R,E)$ be a $(c,d)$-biregular graph and $S\subseteq L\cup R$ such that $|S|\leq d^{-1/\epsilon}|L\cup R|$. Then, for any $t\ge 0$ such that:
\begin{equation}\label{conditionsize}
\frac{1}{t}\ge \sqrt{\lambda^2-(\sqrt{c-1}-\sqrt{d-1})^2}
\end{equation}
where $\lambda=\max(\lambda_2(G),\sqrt{c-1}+\sqrt{d-1})(1+5\epsilon)$

we have that $H_{G[S]}(t)$ is positive definite.
\end{theorem}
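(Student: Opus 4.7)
The plan is to show $\langle f, H_{G[S]}(t) f\rangle > 0$ for every non-zero $f\colon S \to \mathbb{R}$. First, extend $f$ to $f_t$ on an even-depth $(c,d)$-biregular tree extension $T$ of $G[S]$, with depth $\ell = \Theta(1/\epsilon)$ to be fixed later. By \Cref{leafbound}, $\langle f, H_{G[S]}(t) f\rangle = \langle f_t, H_T(t) f_t\rangle$. Since $D'_T - D_T$ is supported on the leaves, where each leaf has tree-degree $1$ and so $D'_T - D_T = D'_T - I$, I obtain the decomposition
\[
\langle f, H_{G[S]}(t) f\rangle \;=\; \langle f_t, H'_T(t) f_t\rangle \;-\; t^2\,\langle f_t^{\ell}, (D'_T-I) f_t^{\ell}\rangle,
\]
where $H'_T(t) := (D'_T - I)t^2 - A_T t + I$ is the corrected tree Bethe-Hessian. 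It therefore suffices to lower-bound the first term and upper-bound the leaf correction by a small fraction of it.

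For the main term, I would apply folding. By \Cref{obfold} and \Cref{adjacencybound}, the degree, norm, and adjacency pieces transfer to the folded function $\widetilde{f_t}$ on $V(G)$ with the right inequalities, giving $\langle f_t, H'_T(t) f_t\rangle \ge \langle \widetilde{f_t}, H_G(t)\widetilde{f_t}\rangle$. Write $u := \widetilde{f_t} = u_L + u_R$ and expand each part in the singular basis of the bipartite adjacency $B$; the top pair is $\hat v_L = \mathbf{1}_L/\sqrt{n_L}$, $\hat v_R = \mathbf{1}_R/\sqrt{n_R}$ with singular value $\sqrt{cd}$, and all other singular values are at most $\lambda_2(G)$. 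The quadratic form $\langle u, H_G(t) u\rangle$ decouples into per-singular-mode $2\times 2$ blocks. For the non-PF modes, the hypothesis $1/t^2 \ge \lambda^2 - (\sqrt{c-1}-\sqrt{d-1})^2$ is exactly the determinant condition $((c-1)t^2+1)((d-1)t^2+1) \ge \lambda^2 t^2$, so each block is PSD with a $(1+5\epsilon)$-sized buffer carried by $\lambda$. For the PF mode, sparsity gives $\alpha_0^2 \le (|S\cap L|/n_L)\Vert u_L\Vert^2$ and $\beta_0^2 \le (|S\cap R|/n_R)\Vert u_R\Vert^2$; combined with $|S| \le d^{-1/\epsilon}|V|$ and $n_L c = n_R d$, the offending term $|2t\sqrt{cd}\,\alpha_0\beta_0|$ is bounded by $O(t\, d^{1-1/\epsilon})\Vert u_L\Vert\Vert u_R\Vert$, which is absorbed into the buffer for $\epsilon < 0.1$.

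For the leaf correction, the hypothesis on $t$ together with $\lambda \ge (1+5\epsilon)(\sqrt{c-1}+\sqrt{d-1})$ yields $t^2\sqrt{(c-1)(d-1)} \le 1-\delta$ for some $\delta = \Omega(1)$; the condition $cd>6$ enters here, since $(\sqrt{c-1}+\sqrt{d-1})^2 - (\sqrt{c-1}-\sqrt{d-1})^2 = 4\sqrt{(c-1)(d-1)}$ must dominate $(\sqrt{c-1}-\sqrt{d-1})^2$ by a constant factor for $\delta$ not to collapse. Then \Cref{leavesbound} gives $\langle f_t^{\ell}, (D'_T-I)f_t^{\ell}\rangle \le \tfrac{2\delta}{e^{\delta\ell}-1}\langle f_t, (D'_T-I)f_t\rangle$, exponentially small in $\ell$. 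Choosing $\ell$ so that this ratio is well below the buffer of the previous step makes the leaf correction strictly smaller than the gain, and strict positivity of $\langle f, H_{G[S]}(t)f\rangle$ follows for every non-zero $f$.

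The main obstacle is the boundary regime $c=2$ (critical for \Cref{thmtwoside}): the PF singular value $\sqrt{cd}=\sqrt{2d}$ exceeds the Ramanujan bound $\sqrt{c-1}+\sqrt{d-1}\approx\sqrt{d}$ only by a factor $\sqrt{2}$, so the PF contamination is killed only by the rapid decay of $d^{-1/\epsilon}$, leaving very little room in the accounting. Relatedly, this is the case that forces the strengthened form of \Cref{leavesbound} (a ratio-of-quadratic-forms bound rather than an absolute leaf-mass bound): without it, the leaf correction cannot be compared to $\langle f_t,(D'_T-I)f_t\rangle$ in the folded estimate, and the argument closes only for $\min(c,d)\ge 3$, which is exactly the gap in the original Lemma~5.7 of \cite{hmmp24} that this proof patches.
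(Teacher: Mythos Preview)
Your approach is essentially the paper's: tree-extend $G[S]$, transfer via \Cref{leafbound}, correct leaf degrees with \Cref{leavesbound}, fold back into $G$ via \Cref{obfold} and \Cref{adjacencybound}, and separate the Perron--Frobenius mode using small support; your SVD-into-$2\times2$-blocks presentation is equivalent to the paper's $M_G^{1/2}$ conjugation (both reduce to the same inequality $\gamma_c\gamma_d>t^2\lambda^2$).

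One slip to fix: the support of the folded function $u=\widetilde{f_t}$ is not $S$ but the range of $\sigma$, i.e.\ the $\ell$-neighborhood of $S$ in $G$, of size at most $|S|\,d^{\ell+1}$; your PF bound $\alpha_0^2\le(|S\cap L|/n_L)\Vert u_L\Vert^2$ must therefore be replaced by $\alpha_0^2\le(|\sigma(V(T))\cap L|/n_L)\Vert u_L\Vert^2$, which with $\ell\approx 1/(2\epsilon)$ is still $\le d^{-\Omega(1/\epsilon)}\Vert u_L\Vert^2$ and the absorption goes through. Two cosmetic points: the hypothesis on $1/t^2$ is not \emph{exactly} the determinant condition (it strictly implies it, as the paper verifies in \eqref{biginequality}); and $cd>6$ is not what rescues the $\delta$-bound (that follows from AM--GM alone, giving $t^2\sqrt{(c-1)(d-1)}\le 1/4$) but is used to force $d\ge 3$ so that $d^{-1/(4\epsilon)}\le\epsilon$ in the PF-mode estimate.
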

\begin{proof}
From \Cref{conditionsize}, we can derive a bound on $t^2$ which is useful for applying \Cref{leafbound}:
\begin{align}
\frac{1}{t^2}&\ge (\sqrt{c-1}+\sqrt{d-1})^2(1+2\epsilon)-(\sqrt{c-1}-\sqrt{d-1})^2\\
&\ge 4(1+\epsilon)\sqrt{(c-1)(d-1)}\\ \label{requirementleafbound}
\Rightarrow t^2&\sqrt{(c-1)(d-1)}\leq 1-\epsilon
\end{align}
In order to show positive definiteness, we would like to prove for any function $f\colon S\to \mathbb{R}$, there is $\langle f,H_{G[S]}(t)f\rangle>0$. Let $\ell=\lfloor\frac{1}{2\epsilon}\rfloor$ be an closest even integer and $T$ be the depth-$\ell$ $(c,d)$-biregular tree extension on $G[S]$, we can define extension function $f_t$, folded function $\widetilde{f}$ and other notations as previous definitions. From \Cref{leafbound}, it's easy to see that:
\begin{equation}
\langle f,H_{G[S]}(t)f\rangle=\langle f_t,H_{T}(t)f_t\rangle=\langle f_t,((D_T-I)t^2-tA_T+I)f_t\rangle
\end{equation}
\paragraph{Bound $\langle f_t,(D_T-I)t^2f_t\rangle$}: As previous defined, we can introduce $D'_T$ to correct degrees of leaves in $T$ and finally subtract the extra contribution. Therefore, it follows from \Cref{leavesbound} and \Cref{conditionsize} that:
\begin{equation}\label{firstterm}
\langle f_t,(D_T-I)t^2f_t\rangle=\langle f_t,(D'_T-I)t^2f_t\rangle-\langle f^{\ell}_t,(D'_T-I)t^2f^{\ell}_t\rangle\ge\langle f_t,(D'_T-I)t^2f_t\rangle(1-4\epsilon)
\end{equation}
The last inequality comes from $\frac{2\epsilon}{e^{\epsilon\ell}-1}\leq 4\epsilon$.
\paragraph{Bound other terms}: Given \Cref{firstterm} and combine with \Cref{obfold,adjacencybound}, we can bound the whole term by:
\begin{align}
\langle f,H_{G[S]}(t)f\rangle&\ge \langle \widetilde{f},(D_G-I)t^2\widetilde{f}\rangle(1-4\epsilon)-t\langle \widetilde{f},A_G\widetilde{f}\rangle+\Vert \widetilde{f}\Vert_2^2\\
&\ge(1-4\epsilon)\langle \widetilde{f},((D_G-I)t^2+I)\widetilde{f}\rangle-t\langle \widetilde{f},A_G\widetilde{f}\rangle
\end{align}
Let $M_G(t)=(D_G-I)t^2+I$, $\gamma_c=t^2(c-1)+1$ and $\gamma_d=t^2(d-1)+1$. Since $G$ is $(c,d)$-biregular graph, we can permuting columns and rows and write $M_G(t)$ in blocked form:
\begin{equation*}
M_G(t)=
\begin{pmatrix}
\gamma_cI&0\\
0&\gamma_dI
\end{pmatrix}
\end{equation*}
It follows that:
\begin{align*}
(1-4\epsilon)M_G(t)-tA_G&=M^{1/2}_G(t)((1-4\epsilon)I-tM^{-1/2}_G(t)A_GM^{-1/2}_G(t))M^{1/2}_G(t)\\
&M^{1/2}_G(t)((1-4\epsilon)I-\frac{t}{\sqrt{\gamma_c\gamma_d}}A_G)M^{1/2}_G(t)
\end{align*}
Then, let $g=M^{1/2}_G(t)\widetilde{f}$, it suffices to bound the following:
\begin{equation}\label{bethebound}
\langle f,H_{G[S]}(t)f\rangle\ge \langle g,((1-4\epsilon)I-\frac{t}{\sqrt{\gamma_c\gamma_d}}A_Gg\rangle=(1-4\epsilon)\Vert g\Vert_2^2-\frac{t}{\sqrt{\gamma_c\gamma_d}}\langle g,A_Gg\rangle
\end{equation}
Since $G$ is $(c,d)$-biregular, we know $A_G$ has the largest eigenvalue $\sqrt{cd}$ with multiplicity one, and the corresponding eigenvector is $\frac{1}{\sqrt{2|E(G)|}}D^{1/2}_G\mathbbm{1}$. Therefore, we can upper bound the second term by:
\begin{equation}\label{spectralbound}
\langle g,A_Gg\rangle\leq \frac{\sqrt{cd}}{2|E(G)|}\langle g,D^{1/2}_G\mathbbm{1}\rangle^2+\lambda_2\Vert g\Vert^2_2
\end{equation}
Here $\lambda_2=\max(\lambda_2(G),\sqrt{c-1}+\sqrt{d-1})$. Then, since $g$ has the same support as $\widetilde{f}$ and $\widetilde{f}$ is non-zero only on vertices in the range of $\sigma$, we can bound the first term above by bounding the size of range of $\sigma$. Since all vertices in the range of $\sigma$ must within distance $\ell$ from some vertex in $S$, we can bound it by: $|\sigma(V(G))|\leq |S|\sum_{i=0}^{\ell}d^{i}\leq |S|d^{\ell+1}\leq |E(G)|d^{-1/(3\epsilon)}$. Then, we can bound \Cref{spectralbound} by:
\begin{equation}
\frac{\sqrt{cd}}{2|E(G)|}\langle g,D^{1/2}_G\mathbbm{1}\rangle^2+\lambda_2\Vert g\Vert^2_2\leq (\frac{d^2}{2|E(G)|}|E(G)|d^{-1/(3\epsilon)}+\lambda_2)\Vert g\Vert_2^2\leq \lambda_2(1+\epsilon)\Vert g\Vert_2^2
\end{equation}
The above bound is from $|S|\leq d^{-1/\epsilon}|V(G)|,d^{-1/(4\epsilon)}\leq \epsilon$ and Cauchy-Schwartz inequality. Note that $d\ge 3$ and $\epsilon<0.01$.

Therefore, from \Cref{bethebound} and above arguments, it follows that:
\begin{equation*}
\langle f,H_{G[S]}(t)f\rangle\ge (1-4\epsilon)-\frac{t\lambda_2(1+\epsilon)}{\sqrt{\gamma_c\gamma_d}}
\end{equation*}
We would like to show that RHS of above is larger than $0$. Since $\frac{1+\epsilon}{1-4\epsilon}\leq 1+5\epsilon$, it suffices to show $t^2\lambda_2^2(1+5\epsilon)<\gamma_c\gamma_d$. It suffices to show:
\begin{equation}\label{biginequality}
\frac{1}{t^4}-\frac{1}{t^2}(\lambda^2-(c-1)-(d-1))+(c-1)(d-1)>0
\end{equation}
Since $\frac{1}{t^2}\ge \lambda^2-(c-1)-(d-1)$, it's not hard to see that LHS of \Cref{biginequality} increases as increasing of $\frac{1}{t^2}$. Therefore, in order to verify \Cref{biginequality}, it suffices to plug $\frac{1}{t^2}=\lambda^2-(c-1)-(d-1)$ into \Cref{biginequality} and verify it. Since $\frac{1}{t^4}>0$, $\frac{1}{t^2}(\lambda^2-(c-1)-(d-1))=1\leq (c-1)(d-1)$, the desired inequality \Cref{biginequality} is true, and we are done.
\end{proof}
Finally, we just need to use the reduction theorem from positive defiteness to subgraph density bound.
\begin{theorem}[\cite{hmmp24} Lemma 6.2]\label{reductionthm}
Let $G=(L\cup R,E)$ be a bipartite graph where $d_1,d_2$ denote left/right average degree of $G$ respectively. Then, for any $t\in(-1,1)\backslash\{0\}$, if $H_G(t)$ is positive definite, then $(d_1-1)(d_2-1)\leq \frac{1}{t^2}$.
\end{theorem}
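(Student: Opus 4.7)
\noindent\textbf{Proof proposal for \Cref{reductionthm}.}

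The plan is to exploit positive definiteness of $H_G(t)=(D_G-I)t^2-tA_G+I$ by plugging in the simplest possible family of test vectors, namely the linear combinations of the indicators of the two sides of the bipartition. Concretely, let $\mathbbm{1}_L$ and $\mathbbm{1}_R$ be the indicator functions of $L$ and $R$, and consider $f=\alpha\mathbbm{1}_L+\beta\mathbbm{1}_R$ for $(\alpha,\beta)\in\mathbb{R}^2$. The average-degree hypotheses give $\sum_{v\in L}\dgr(v)=\sum_{v\in R}\dgr(v)=|E|$ with $|E|=d_1|L|=d_2|R|$, and bipartiteness gives $\mathbbm{1}_L^\top A_G\mathbbm{1}_L=\mathbbm{1}_R^\top A_G\mathbbm{1}_R=0$ and $\mathbbm{1}_L^\top A_G\mathbbm{1}_R=|E|$. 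So $\langle f,D_Gf\rangle=(\alpha^2+\beta^2)|E|$, $\langle f,A_Gf\rangle=2\alpha\beta|E|$, and $\|f\|_2^2=\alpha^2|L|+\beta^2|R|$.

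Substituting these into $\langle f,H_G(t)f\rangle$ and rescaling by $x=\alpha\sqrt{|L|}$, $y=\beta\sqrt{|R|}$ (using $|E|/\sqrt{|L||R|}=\sqrt{d_1d_2}$), the quadratic form becomes
\begin{equation*}
\langle f,H_G(t)f\rangle \;=\; \bigl[t^2(d_1-1)+1\bigr]x^2 \;+\; \bigl[t^2(d_2-1)+1\bigr]y^2 \;-\; 2t\sqrt{d_1 d_2}\,xy.
\end{equation*}
Positive definiteness of $H_G(t)$ forces positive definiteness of this restricted $2\times 2$ form, i.e.\ positivity of its determinant:
\begin{equation*}
\bigl[t^2(d_1-1)+1\bigr]\bigl[t^2(d_2-1)+1\bigr] \;>\; t^2 d_1 d_2.
\end{equation*}

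The final step is a clean piece of algebra: expanding the left-hand side and using the identity $d_1 d_2-d_1-d_2+2=(d_1-1)(d_2-1)+1$ collapses the inequality to
\begin{equation*}
t^2(d_1-1)(d_2-1)\,(t^2-1) \;>\; t^2-1.
\end{equation*}
Since $t\in(-1,1)\setminus\{0\}$ we have $t^2-1<0$, so dividing through flips the sign and yields $t^2(d_1-1)(d_2-1)<1$, which is the claimed bound $(d_1-1)(d_2-1)\le 1/t^2$ (in fact strict).

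There is essentially no obstacle here: the proof is a one-shot test-vector argument, and the only thing to be careful about is the sign flip coming from $t^2-1<0$, which is precisely why the hypothesis $|t|<1$ is needed; without it, the same algebra would give the reverse inequality, and positive definiteness of the $2\times 2$ restriction would fail to produce the desired bound.
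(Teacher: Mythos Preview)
Your proof is correct. The paper does not give its own proof of this statement; it simply cites it as Lemma~6.2 of \cite{hmmp24} and uses it as a black box, so there is no in-paper argument to compare against. Your test-vector approach---restricting $H_G(t)$ to the two-dimensional subspace spanned by $\mathbbm{1}_L$ and $\mathbbm{1}_R$, reading off the $2\times 2$ determinant condition, and factoring it as $(t^2-1)\bigl(t^2(d_1-1)(d_2-1)-1\bigr)>0$---is the natural and standard one, and the algebra checks out.
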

Combine \Cref{reductionthm} and \Cref{spectralthm}, we prove the subgraph density bound:
\begin{corollary}[\Cref{subgraphexpansion}, Restated]
Let $2\leq c\leq d$ be integers that $cd>6$ and $0<\epsilon<0.01$. For any $(c,d)$-biregular graph $G=(L\cup R,E)$ and its vertex set $S\subseteq L\cup R$ that $|S|\leq d^{-1/\epsilon}|L\cup R|$, let $L_S=S\cap L,R_S=S\cap R, m=|E(G[S])|$ and $d_L=m/|L_S|,d_R=m/|L_R|$ denote the left/right average degree in $G[S]$, we have:
\begin{equation*}
(d_L-1)(d_R-1)\leq \lambda^2-(\sqrt{c-1}-\sqrt{d-1})^2
\end{equation*}
where $\lambda:=\max(\lambda_2(G),\sqrt{c-1}+\sqrt{d-1})(1+5\epsilon)$
\end{corollary}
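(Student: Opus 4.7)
The plan is to derive the corollary as an essentially immediate consequence of the two preceding results: \Cref{spectralthm}, which certifies positive definiteness of the Bethe--Hessian $H_{G[S]}(t)$ for an appropriate range of $t$, and \Cref{reductionthm}, which converts such positive definiteness into an average-degree inequality on a bipartite graph. I would simply pick the extremal value of $t$ permitted by \Cref{spectralthm} and apply \Cref{reductionthm} to $G[S]$.

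Concretely, I would set $t^\ast > 0$ so that equality holds in the hypothesis of \Cref{spectralthm}, namely
\[
\frac{1}{(t^\ast)^2} \;=\; \lambda^2 - (\sqrt{c-1}-\sqrt{d-1})^2,
\]
where $\lambda = \max(\lambda_2(G),\sqrt{c-1}+\sqrt{d-1})(1+5\epsilon)$. Before invoking the reduction, I need to check the compatibility condition $t^\ast \in (-1,1)\setminus\{0\}$ from \Cref{reductionthm}, i.e.\ that $1/(t^\ast)^2 > 1$. Since $\lambda \geq \sqrt{c-1}+\sqrt{d-1}$, the right-hand side is at least $(\sqrt{c-1}+\sqrt{d-1})^2 - (\sqrt{c-1}-\sqrt{d-1})^2 = 4\sqrt{(c-1)(d-1)}$, and under the numerical hypothesis $c,d\geq 2$ together with $cd>6$, the minimal relevant cases yield $(c-1)(d-1)\geq 3$, so $4\sqrt{(c-1)(d-1)} \geq 4\sqrt{3} > 1$. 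Hence $t^\ast \in (0,1)$ as required.

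With this choice, \Cref{spectralthm} (whose size hypothesis $|S|\leq d^{-1/\epsilon}|L\cup R|$ is inherited from the corollary) asserts that $H_{G[S]}(t^\ast)$ is positive definite. Applying \Cref{reductionthm} to the bipartite graph $G[S]$, whose left and right average degrees are by definition $d_L$ and $d_R$, yields
\[
(d_L-1)(d_R-1) \;\leq\; \frac{1}{(t^\ast)^2} \;=\; \lambda^2 - (\sqrt{c-1}-\sqrt{d-1})^2,
\]
which is exactly the claimed inequality. There is really no substantive obstacle: all the technical work has been discharged in the proofs of the two invoked theorems, and the only mild subtlety is verifying the numerical condition $1/(t^\ast)^2>1$ so that \Cref{reductionthm} is applicable — which is where the exclusion $cd>6$ (ruling out $(c,d)\in\{(2,2),(2,3)\}$) enters.
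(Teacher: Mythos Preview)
Your proposal is correct and follows exactly the approach the paper intends: the paper's own proof of this corollary is the single line ``Combine \Cref{reductionthm} and \Cref{spectralthm},'' and you have simply spelled out that combination carefully, including the check that $t^\ast\in(0,1)$. One small remark: your claim that the condition $cd>6$ is what makes $1/(t^\ast)^2>1$ hold is not quite right, since even $(c,d)=(2,2)$ would give $4\sqrt{(c-1)(d-1)}=4>1$; the hypothesis $cd>6$ is actually consumed inside the proof of \Cref{spectralthm} itself, not in this deduction.
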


\listoffixmes

\end{document}